\numberwithin{equation}{section}
\theoremstyle{plain}
\newtheorem{Th}{Theorem}[section]
\newtheorem{Lemma}[Th]{Lemma}
\newtheorem{Cor}[Th]{Corollary}
\newtheorem{assumption}[Th]{Assumption}
\newcommand{\bS}{\mathbb{S}}
\newcommand{\bod}{\mathbf{d}}
\newcommand{\ty}{\tilde{y}}
\title{A Weighted Sampling Method for Inverse Medium Problem \\ with Limited Aperture}
\author{ Fuqun Han \footnote{Department of Mathematics, University of California Los Angeles, Los Angeles, California.  ({fqhan@math.ucla.edu}).}
\and Kazufumi Ito \footnote{Department of Mathematics,
North Carolina State University, Raleigh, North Carolina.  ({kito@ncsu.edu}).}}
\begin{document}
\date{}
\maketitle
\abstract{
Inverse medium scattering problems arise in many applications, but in practice, the measurement data are often restricted to a limited aperture by physical or experimental constraints. 
Classical sampling methods, such as MUSIC and the linear sampling method, are well understood for full-aperture data, yet their performance deteriorates severely under limited-aperture conditions, especially in the presence of noise. 
We propose a new sampling method tailored to the inverse medium problem with limited-aperture data. The method is motivated by the linear sampling framework and incorporates a weight function into the index function. The weight is designed so that the modified kernel reproduces the full-aperture behavior using only limited data, which both localizes oscillations and improves the conditioning of the far-field system, thereby yielding more accurate and stable reconstructions. We provide a theoretical justification of the method under the Born approximation and an efficient algorithm for computing the weight. Numerical experiments in two and three dimensions demonstrate that the proposed method achieves greater accuracy and robustness than existing sampling-type methods, particularly for noisy, limited-aperture data.

\medskip
\noindent\textbf{Keywords:} Inverse medium scattering, limited-aperture data, sampling method, weighted kernel.
}

\section{Introduction}

Inverse acoustic medium scattering arises in a wide range of applications, including radar and sonar imaging \cite{sonar}, microwave tomography for tissue characterization \cite{microwave_tomo}, geophysical exploration \cite{geophysical_inverse}, nano-optics \cite{novotny2012principles}, and nondestructive testing \cite{non_destructive_test_book}. 
In many of these settings, measurement data are available only over a limited range of incident and observation angles. 
For example, in sonar imaging and oil exploration, transducers are typically located only above the region of interest, and the effective aperture decreases as the target lies deeper underwater or underground. 
In tomographic phase microscopy, the illumination angle is limited to about $60^\circ$ due to the numerical aperture of the condenser lens \cite{TPM_review}. 
Consequently, developing reconstruction methods that remain accurate and stable with limited-aperture measurements is crucial

Traditional approaches formulate the inverse medium problem as the minimization of a data-fidelity functional with regularization. 
While effective in certain regimes, such optimization-based methods are computationally expensive and prone to local minima. 
To overcome these difficulties, a variety of non-iterative sampling methods have been developed, such as MUSIC \cite{music_first_2003}, the linear sampling method (LSM) \cite{first_LSM_1996} and its generalizations \cite{GLSM_first,garnier2023linear}, the factorization method \cite{factorization_book}, and reverse time migration \cite{RTM_EM_2013}; see also the surveys \cite{book_cakoni, xudong_book_2018}. 
These methods construct an index function that attains large values inside the inhomogeneous medium and small values outside. 
Although several extensions have been proposed for limited-aperture data, their performance typically degrades significantly compared with the full-aperture setting
\cite{music_limited_Park, GLSM_limited_aperture, dsm_limited}. 
Alternative strategies include data-completion and view-reconstruction techniques \cite{dou2022data} and physics-aware deep-learning methods \cite{yin2025physics}, though these often incur higher computational costs. 
This motivates the development of sampling methods specifically adapted to limited-aperture settings.

In this work, we propose a weighted linear sampling method for reconstructing the support of an inhomogeneous medium using data collected over a limited range of angles. 
Our approach is inspired by the classical LSM \cite{book_cakoni} in the full-aperture case and by direct sampling methods \cite{ito2012direct,EIT_DSM,decoupling_dsm}, which exploit suitably designed inner products to extract reconstruction information from limited data.

For the classical LSM, let $F$ denote the far-field operator associated with the far-field pattern $u^{\infty}$. 
If $g_x$ satisfies $F g_x = \Psi_x$, where $\Psi_x$ denotes the far-field pattern of the fundamental solution, the LSM defines the index function
\begin{equation}
 I_{LSM}(x) := \frac{1}{\| g_x\|^{2}_{L^2(\mathbb{S}^{d-1})}}\,,
\end{equation}
which indicates the support of the medium. 
A major difficulty in the limited-aperture case is that the discretized far-field operator (or equivalently, the data matrix) becomes much more ill-conditioned. 
Although some methods have been tested with noiseless limited-aperture data, they tend to be highly sensitive to measurement errors.

To mitigate the ill-posedness inherent in limited-aperture data, we introduce a weighted kernel that plays the role of a surrogate for the full-aperture kernel underlying the classical LSM. 
Recall that in the full-aperture case, the kernel $\langle \Psi_x, \Psi_z \rangle_{L^2(\mathbb{S}^{d-1})}$ is strongly peaked when $x=z$ and small otherwise, so that the corresponding index function provides sharp localization of the medium. 
When the measurement aperture is limited, this localization deteriorates. 
To restore it, we modify the inner product by inserting a weight function $w$, and define for $x,z \in \Omega$
\[
K_w(x,z) := \langle \Psi_x,\, w \Psi_z \rangle_{L^2(\Gamma)},
\]
where $\Gamma$ is the measurement surface and $\Psi$ is the fundamental solution of the forward problem. 
The weight $w$ is chosen so that $K_w(x,z)$ recovers the desired localization property, namely that it attains a strong peak at $x=z$ and is small otherwise, thereby mimicking the behavior of the full-aperture kernel. 
This construction improves the conditioning of the associated far-field equation and leads to an index function that is significantly more stable and accurate under limited-aperture measurements.

The remainder of the paper is organized as follows. 
Section~\ref{sec_formulation} formulates the inverse problem, motivates the weighted kernel construction, and introduces the index function. 
Section~\ref{sec_accuracy} analyzes the accuracy of the index function, beginning with the continuous full-data setting and then addressing the discrete finite-data case under the Born approximation. 
Section~\ref{sec_construction_w} presents an efficient computational procedure for constructing the weight function and examines its behavior under noisy data. 
Finally, Section~\ref{sec_numerics} reports two- and three-dimensional numerical experiments that validate the proposed method and confirm the theoretical predictions under noisy and limited-aperture conditions.

\section{Inverse medium scattering problem with limited aperture}
\label{sec_formulation}
In this work, we focus on the scalar wave equation (with $d=2,3$)
\begin{equation}
\label{model_equation}
  \Delta u + k^2(1+q)u = 0 \quad \text{in} \quad \mathbb{R}^d\,,
\end{equation}
to develop a new weighted sampling method, where $k$ is the wave number and $q\in L^2(D)$ represents the inhomogeneous medium that will be recovered.
Consider the incident field $u^{i}(y,\bod) = e^{ik y \cdot \bod}$ for $\bod \in \bS^{d-1}$ as the incidence direction, then the scattered field is denoted as $u^s = u- u^{i}$.

The far-field pattern $u^\infty$ describes the asymptotic behavior of the scattered wave $u^s$ when the observation point is far away from the inclusion. Specifically, $u^\infty$ is related to $u^s$ by
\begin{equation}
    u^s(y,\bod) = \frac{e^{ik|y|}}{|y|^{(d-1)/2}}\left(u^{\infty}(\hat{y},\bod) + \mathcal{O}\!\left(\frac{1}{|y|}\right)\right),
    \qquad \hat{y} = \frac{y}{|y|}\,.
\end{equation}

We shall also often need the fundamental solution $\Phi(z,y)$ and its far field pattern associated with the operator $-\Delta-k^2$ under the radiation condition
\begin{equation}
\label{def_Phi}
    \Phi(z,y) = \begin{cases}
    e^{ik|z-y|}/(4\pi |z-y|)\,  &\text{ for } d=3\,; \\
      \frac{i}{4}H_0^1(k|z-y|)\,  &\text{ for } d=2\, .
    \end{cases}\qquad 
    \Phi^{\infty}(z,\hat{y}) = \begin{cases} \frac{1}{4\pi}  e^{-ik z\cdot\hat{y}}
    \,  &\text{ for } d=3\,; \\
    \frac{e^{i\pi/4}}{\sqrt{8\pi k}}   e^{-ik z\cdot \hat{y}}\,  &\text{ for } d=2\, .
    \end{cases}
\end{equation}
In the following, we write $\Psi_z(y) = \Phi^{\infty}(z,\hat{y})$ to simplify the notation.
Using the fundamental solution $\Phi(z,\cdot)$ to \eqref{model_equation}, we can represent the total field $u$ as
\begin{equation}
\label{int_equ_u}
    u(y,\bod) = k^2\int_{D}q(z)\Phi(z,y)u(z,\bod)\,dz + u^i(y,\bod)\,,
\end{equation}
which yields the alternative expression for the far-field pattern
\begin{equation}
\label{far_field_integral}
u^{\infty}(\hat{y},\bod)  = k^2\gamma\int_{D}q(z)\, u(z,\bod)\,e^{-ik z \cdot \hat{y}}\, dz\,,
\quad 
\text{for } \hat{y}, \,\bod \in \bS^{d-1}\,,
\end{equation}
where $\gamma$ is a dimension-dependent constant.

 The inverse medium scattering problem of our interest is to recover the support of $q$ from the far-field pattern $u^{\infty}(\hat{y},\bod)$, which is assumed to be available over a surface $\Gamma\subset \bS^{d-1}$, i.e., only the measurement data with a limited aperture is available. 
In particular, we introduce the measurement aperture
\begin{equation}
\label{def_Gamma}
\Gamma :=\begin{cases}
\{ (\cos\theta,\,\sin\theta):\,  \theta \in [-\alpha,\alpha]\} \subset \mathbb{S}^1 &  \text{in } \mathbb{R}^2\,, \\
\{ (\cos\phi\sin\theta,\, \sin\phi\sin\theta,\, \cos\theta):\, \phi \in [-\alpha,\alpha],\, \theta \in [0,\beta]\} \subset \mathbb{S}^{2}  & \text{in } \mathbb{R}^3\,,
\end{cases}
\end{equation}
where the parameters $\alpha$ (and $\beta$ in three dimensions) specify the measurement aperture.

The far-field operator is now defined by
\begin{equation}
\label{def_far}
(Fv)(y) := \int_{\Gamma} u^{\infty}(y,\bod) v(\bod) d\bod\,,
\end{equation}
for $v \in L^2(\Gamma)$ and $y \in \Gamma$ when restricting the integration on $\Gamma$.
For brevity, we omit the hat notation on $y$.

We emphasize that classical sampling and factorization methods—such as the linear sampling method, MUSIC, and the factorization method—are all based on suitable factorizations of this far-field operator together with range conditions \cite{book_cakoni,factorization_book,music_first_2003}.
 
For convenience, we first recall the classical linear sampling method \cite{book_cakoni}:
\begin{itemize}
    \item Construct a set of sampling points $\Omega_{N_x}:=\{x_j\}_{j=1}^{N_x}$ covering the domain of interest $\Omega$.
    \item For each $x \in \Omega_{N_x}$, solve $F g_x = \Psi_x$, where $\Psi_x$ is often referred to as a test function. 
    \item Define the index function
    \begin{equation}
    \label{def_index_LSM}
    I_{LSM}(x) := \frac{1}{\| g_x \|^{2}_{L^2(\Gamma)}} \,,
    \end{equation}
    which serves as an indicator: it attains relatively large values when $x$ lies inside the support of $q(x)$, and remains comparatively small when $x$ lies outside.
\end{itemize}

It is well known that the linear sampling method performs reliably when full-aperture data are available and the measurement noise is relatively small. In contrast, reconstructions from limited-aperture data are often unsatisfactory and highly sensitive to random noise; this issue will also be illustrated in the numerical experiments of Section~\ref{sec_numerics}. The underlying reason is that the far-field operator $F$ in \eqref{def_far} becomes increasingly ill-posed as the aperture decreases.

This observation motivates the present work. We introduce a new index function based on a modified family of testing functions $\eta_x$ and a modified far-field operator $F_w$. The operator $F_w$ is designed so that its discretization exhibits better conditioning in terms of its singular values. As a result, the norm of the solution $g^w_x$ to the modified far-field equation
$
F_w g^w_x = \eta_x
$
leads to a more effective index function in the setting of noisy, limited-aperture measurements. This modification aims to enhance both the accuracy and the stability of the reconstruction.

To better illustrate the motivation outlined above, we now consider a simplified setting under the Born approximation for the scattered field, i.e., $u \approx u^i$, and further assume $q(x) \equiv 1$ in $D$.  
In this case, the far-field pattern in \eqref{far_field_integral} for a measurement angle $y\in \mathbb{S}^{d-1}$ reduces to  
\begin{equation}
\label{far_field_approx}
    u^{\infty}(y,\bod) \;\approx\; k^2\gamma \int_{D} e^{ik(\bod-y)\cdot z}\, dz \, .
\end{equation}

Within this approximation, one obtains the factorization
$
F \;=\; k^2\gamma \, H H^* \quad \cite{kirsch2008factorization},
$
where the operator $H : L^2(\Gamma)\to L^2(D)$ is defined by  
\begin{equation}
\label{def_H}
    (H\phi)(z) := \int_{\Gamma} \phi(y)\, e^{ik z\cdot y}\, d\sigma_y,
    \qquad z\in D\,,
\end{equation}
and $H^*$ denotes the adjoint of $H$.  

Introducing the kernel function $K$ by  
\begin{equation}
\label{def_K}
    K(x,z) := (H^*H)(x,z) 
    \;=\; \int_{\Gamma} e^{ik(x-z)\cdot y}\, d\sigma_y
    \;=\; \langle \Psi_x, \Psi_z \rangle_{L^2(\Gamma)}\, ,
\end{equation}
we observe that $(k^2\gamma)K$ and the far-field operator $F$ share the same set of singular values after discretization.  
This follows directly from the singular value decomposition of $H$, since $F = k^2\gamma HH^*$ and $K = H^*H$.  

To analyze $K(x,z)$, we recall the Jacobi-Anger identity: 
\begin{equation}
\label{Jacobi_anger}
     e^{ikx\cdot y} \;=\;
     \begin{cases}
     \displaystyle \sum_{n\in \mathbb{Z}} i^n J_n(k|x|)\, e^{in(\theta_x-\theta_y)}, 
     & \text{in } \mathbb{R}^2\,, \\[2ex]
     \displaystyle 4\pi \sum_{n\geq 0} i^n j_n(k|x|) 
     \sum_{m=-n}^n Y_n^m(\hat{x}) \, \overline{Y_n^m(y)},
     & \text{in } \mathbb{R}^3\,,
     \end{cases}
\end{equation}
where $x\in \mathbb{R}^d$ and $y\in \mathbb{S}^{d-1}$.  

Specializing to $\mathbb{R}^2$ and a fixed aperture $\Gamma$, to understand the impact of aperture truncation, we expand the kernel $K(x,z)$ in Fourier modes as 
\begin{equation}
\label{similarlity_gamma}
  K(x,z) = \langle \Psi_x, \Psi_z \rangle_{L^2(\Gamma)} 
   = 2\alpha J_0(k|x-z|) \;+\; \sum_{n\geq 1} i^n J_n(k|x-z|)\, e^{in(\theta_x-\theta_z)}\, \frac{2\sin(n\alpha)}{n}\,.
\end{equation}
Thus $K(x,z)$ consists of the leading term $J_0(k|x-z|)$, corresponding to the full-aperture case, plus a contribution from higher-order Bessel functions arising from aperture truncation by $\alpha < \pi$.

From the definitions of $F$ and $K$, it follows that the accuracy and stability of solving $F g_x = \Psi_x$ is governed by the degree of ill-posedness of $K$, as given in \eqref{def_K}–\eqref{similarlity_gamma}. In the full-aperture case $\Gamma = \bS$ with $x,z\in \Omega$, the series in \eqref{similarlity_gamma} collapses and we obtain
\[
  K(x,z) = 2\pi J_0(k|x-z|)\,,
\]
which attains its unique maximum at $x=z$ and decays monotonically as $|x-z|$ increases. Consequently, if the sampling points in $\Omega$ are well-separated, $K$ is diagonally dominant, and the discrete far-field operator $F$ has a relatively small condition number. Hence, the computation of the index function via $F g_x = \Psi_x$ is numerically stable.

By contrast, for a limited aperture $\Gamma$, the tail terms in \eqref{similarlity_gamma} no longer vanish. In this case, $K(x,z)$ may fail to attain its maximum at $x=z$, and its decay away from $x=z$ is significantly slower. In the neighborhood of the origin with $r>0$, the Bessel function $J_0(r)$ decreases as $r$ increases, whereas the higher-order modes $J_n(r)$ for $n>0$ grow with $r$, thereby diminishing the localization induced by $J_0$.
As a result, $F$ becomes highly ill-conditioned under limited-aperture measurements, rendering the numerical solution of the far-field equation unstable, even with regularization.

\medskip 

To address this difficulty, we introduce a weight function $w \in L^{\infty}(\Gamma)$, to be determined, and consider the weighted kernel
\begin{equation}
\label{def_Kw}
    K_w(x,z) := \langle \Psi_x,\, w \Psi_z \rangle_{L^2(\Gamma)}  
    = \int_{\Gamma} e^{ik(x-z)\cdot y}\, w(y)\, d\sigma_y\,.
\end{equation}
As we shall show, with a suitably chosen weight $w$ satisfying
\begin{equation}
\label{K_W_approx}
    K_w(x,z) \approx J_0(k|x-z|)\,,
\end{equation}
the kernel $K_w$ is sharper than the unweighted kernel $K$ in \eqref{def_K}, and its singular values decay more slowly. Moreover, the construction of such a weight $w$ is relatively straightforward, as will be demonstrated in Section~\ref{sec_construction_w}. We emphasize that the requirement \eqref{K_W_approx} is to recover the full aperture kernel and enables the analytical construction of candidate weights $w$, which in turn allows us to study the theoretical properties of the weighted kernel (see Section~\ref{sec_finite}).

Correspondingly, we define the weighted far-field operator by
\[
    F_w := w^{1/2} F w^{1/2}\,,
\]
and observe that $F_w$ and $K_w$ share the same singular values after discretization under the Born approximation. With an appropriate weight $w$, the modified far-field equation
\[
    F_w g^w_x = \eta_x
\]
can be solved more stably and lead to a more accurate reconstruction.  

Before presenting the theoretical justification, we summarize the proposed algorithm.

\medskip 
\noindent \textbf{Weighted Linear Sampling Method.}
\begin{itemize}
\item Let $\Gamma_{N_y} = \{y_j\}_{j=1}^{N_y}$ be a discrete set of measurement points on $\Gamma$. The measurement data consists of $u^{\infty}(y,\bod)$ for $y,\bod \in \Gamma_{N_y}$.  

\item Let $\Omega_{N_x} = \{x_j\}_{j=1}^{N_x}$ be a set of sampling points in $\Omega$. For each $x \in \Omega_{N_x}$, solve
\begin{equation}
\label{def_gzw}
    F_w g^w_x = \eta_x\,, 
    \qquad F_w := w^{1/2} F w^{1/2}\,, \qquad \eta_x := w^{1/2}\Psi_x\,,
\end{equation}
using Tikhonov regularization, where $w$ satisfies \eqref{K_W_approx} (see Section \ref{sec_construction_w} for details).

\item For each $x \in \Omega_{N_x}$, compute the index function
\begin{equation}
\label{def_index}
    I_w(x) := \frac{1}{\| g^w_x \|^2_{L^2(\Gamma)}}\,,
\end{equation}
which serves as an indicator to reconstruct the support of the inhomogeneous medium.
\end{itemize}

\section{Accuracy analysis of the weighted index function}
\label{sec_accuracy}

In this section, we examine the accuracy of the index function defined in \eqref{def_index} in the setting of limited-aperture data.
In Section~\ref{sec_infinite}, we begin with an idealized scenario in which sufficiently many measurement data within the limited aperture are available so that the far-field operator can be characterized analytically. In this case, we show that the proposed index function correctly identifies the inclusions through certain range conditions of the far-field operator.
In Section~\ref{sec_finite}, we then address the practically relevant situation where both the boundary measurement points and the incident waves are finite in number. For this setting, under Born approximation, we explicitly construct the weight function and derive rigorous estimates for the kernel $K_w$ and the index function, thereby confirming its accuracy for representative classes of inhomogeneous medium.

\subsection{Continuous analysis with infinite measurements} 
\label{sec_infinite}

We first analyze the case where the far-field operator $F$ defined in \eqref{def_far} is known on the measurement surface $\Gamma$ in \eqref{def_Gamma}.  
Let $H_w := H w^{1/2}$, with $H$ given in \eqref{def_H}.  
The following theorem characterizes the behavior of solutions to the weighted far-field equation $F_w g_x^w = \eta_x$, where $F_w$ and $\eta_x$ are defined in \eqref{def_gzw}.

\begin{Th}
\label{Thm_LSM}
Assume that $k$ is not a Dirichlet eigenvalue of $-\Delta$ in $D$, and that the weight function 
$w \in L^{\infty}(\Gamma)$ satisfies $\text{essinf}_{y \in \Gamma} |w(y)| > 0$.  
Let $F_w$ and $\eta_x$ be defined as in \eqref{def_gzw}. Then:
\begin{itemize}
    \item If $x \in D$, there exists a sequence $g_{x,\tau}^w \in L^2(\Gamma)$ such that
    \begin{equation}
       \lim_{\tau \to 0} \| F_w g_{x,\tau}^w - \eta_x \|_{L^2(\Gamma)} = 0\,,
    \end{equation}
    and
    \begin{equation}
    \label{finite_Hg}
        \lim_{\tau \to 0} \| H_w g_{x,\tau}^w \|_{L^2(D)} < \infty \,.
    \end{equation}
    \item If $x \in \mathbb{R}^d \setminus D$, then for every sequence $g_{x,\tau}^w \in L^2(\Gamma)$ satisfying
    \begin{equation}
        \lim_{\tau \to 0} \| F_w g_{x,\tau}^w - \eta_x \|_{L^2(\Gamma)} = 0\,,
    \end{equation}
   one has  
    \begin{equation}
    \label{blow_up_Hg}
        \lim_{\tau \to 0} \| H_w g_{x,\tau}^w \|_{L^2(D)} = \infty \,.
    \end{equation}
\end{itemize}
\end{Th}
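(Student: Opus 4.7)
The plan is to reduce the weighted far-field equation to the classical linear sampling formulation via an invertible change of variables on $L^2(\Gamma)$, and then invoke the standard Kirsch-type range characterization of the Herglotz operator adapted to the limited aperture. Since $w \in L^{\infty}(\Gamma)$ with $\text{essinf}_\Gamma |w| > 0$, the multiplication map $g \mapsto w^{1/2} g$ is a bounded bijection of $L^2(\Gamma)$ with bounded inverse; this is the only structural fact about $w$ used in the proof. Setting $\tilde g_{x,\tau} := w^{1/2} g^w_{x,\tau}$, direct computation yields $F_w g^w_{x,\tau} - \eta_x = w^{1/2}(F\tilde g_{x,\tau} - \Psi_x)$ and $H_w g^w_{x,\tau} = H\tilde g_{x,\tau}$. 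By the two-sided bounds on $w$, $\|F_w g^w_{x,\tau} - \eta_x\|_{L^2(\Gamma)} \to 0$ is equivalent to $\|F\tilde g_{x,\tau} - \Psi_x\|_{L^2(\Gamma)} \to 0$, while $\|H_w g^w_{x,\tau}\|_{L^2(D)} = \|H\tilde g_{x,\tau}\|_{L^2(D)}$; the theorem therefore collapses to the analogous LSM statement for the unweighted equation $F\tilde g = \Psi_x$ on the limited aperture $\Gamma$.

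For the reduced problem I would follow the classical Kirsch-style argument. Using the Born factorization developed in Section~\ref{sec_formulation}, $F$ is related to $H$ and $H^*$ so that (up to the constant $k^2\gamma$) the equation $F\tilde g = \Psi_x$ reads $H^*(H\tilde g) = \Psi_x$; consequently, boundedness of $\|H\tilde g_{x,\tau}\|_{L^2(D)}$ along an approximate-solution sequence is equivalent to $\Psi_x$ lying in the $L^2(\Gamma)$-closure of $H^*(L^2(D))$. The key range identity is $\Psi_x \in \overline{\text{Range}(H^*)}$ iff $x \in D$. For $x \in D$, I would build the approximate solution via Tikhonov regularization $\tilde g_{x,\tau} = (F^*F + \tau I)^{-1} F^* \Psi_x$, using an $L^2(D)$ approximation of the point source at $x$ to bound $\|H\tilde g_{x,\tau}\|_{L^2(D)}$ uniformly in $\tau$. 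For $x \notin D$, I would argue by contradiction: were $\|H\tilde g_{x,\tau}\|$ bounded along a sequence with $F\tilde g_{x,\tau} \to \Psi_x$, a weak $L^2(D)$-limit of $H\tilde g_{x,\tau}$ would produce $\psi \in L^2(D)$ with $H^*\psi = \Psi_x$, contradicting the range identity. The hypothesis that $k$ is not a Dirichlet eigenvalue of $-\Delta$ on $D$ enters here to ensure the injectivity required in the range identification step.

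The principal obstacle is establishing the range identity $\Psi_x \in \overline{\text{Range}(H^*)} \iff x \in D$ on the limited aperture $\Gamma \subsetneq \bS^{d-1}$, since the classical Kirsch statement is formulated over the full sphere. My intended route is analytic continuation: both $H^*\psi$ for $\psi \in L^2(D)$ and the test function $\Psi_x$ extend from $\Gamma$ to entire functions of the angular variable on $\bS^{d-1}$, so $L^2$-closeness on the open set $\Gamma$ can be transferred to the full sphere via compactness of $\overline{D}$ and unique continuation, reducing the limited-aperture range condition to the known full-aperture one. Making this transfer precise at the level of $L^2$-closures, rather than merely pointwise, is the technically delicate point of the argument.
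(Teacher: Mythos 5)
Your first step --- conjugating by the multiplication operator $M_{w^{1/2}}$, which is a bounded bijection of $L^2(\Gamma)$ with bounded inverse because $w\in L^{\infty}(\Gamma)$ and $\operatorname{essinf}_{\Gamma}|w|>0$ --- is exactly the paper's reduction: the paper sets $H_w=H\,M_{w^{1/2}}$ and $G_w=M_{w^{1/2}}G$ and observes that composition with an invertible multiplication preserves compactness, injectivity, and the relevant range conditions, so the weighted problem collapses to the unweighted one. Up to that point the two arguments coincide.

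The gap is in the second step. Theorem~\ref{Thm_LSM} is a statement about the exact far-field operator $F$ of \eqref{model_equation}; the Born approximation is introduced only later (Assumption~\ref{assump:born} in Section~\ref{sec_finite}) and is not a hypothesis here. Your reduced argument rewrites $F\tilde g=\Psi_x$ as $H^{*}(H\tilde g)=\Psi_x$ using the factorization $F=k^2\gamma HH^{*}$, which is valid only in the Born regime, so what you prove is the linearized version of the theorem, not the theorem itself. The paper instead invokes the exact factorization $F=GH$ from \cite{book_cakoni}, where $G$ maps $u_0\in\overline{R(H)}$ to the far field $p^{\infty}$ of the radiating solution of $\Delta p+k^2(1+q)p=-k^2qu_0$, and the dichotomy follows from the classical range identity $\Psi_x\in R(G)\iff x\in D$ together with compactness and injectivity of $H$; in the Born limit $G$ degenerates to a multiple of $H^{*}$, which is why your argument looks parallel but does not cover the general case. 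A second, smaller slip: you phrase the key identity as $\Psi_x\in\overline{\mathrm{Range}(H^{*})}\iff x\in D$, but since $H$ is injective one has $\overline{R(H^{*})}=N(H)^{\perp}=L^2(\Gamma)$, so every $\Psi_x$ lies in that closure; the characterization must be in terms of the exact range (of $G$, or of $H^{*}$ in the Born case). Your contradiction argument for $x\notin D$ in fact uses the exact-range version, so this is an internal inconsistency rather than a fatal error. On the positive side, you correctly flag that the range characterization is classically stated for full aperture, whereas the paper applies it on $\Gamma$ without comment; your proposed analytic-continuation transfer addresses a genuine point that the paper's proof leaves implicit.
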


\begin{proof}
In the full-aperture case, it is shown in \cite{book_cakoni} that the factorization $F = G H$ holds, where $G : \overline{R(H)} \to L^2(\Gamma)$ and $\overline{R(H)}$ is the closure of the range of $H$ in \eqref{def_H}.  
Moreover, $G$ satisfies $G(u_0) = p^{\infty}$, where $u_0$ and $p$ solve
\begin{equation}
   \begin{cases} 
   \Delta p + k^2(1+q)p = -k^2 q u_0 \quad &\text{in } \mathbb{R}^d, \\
   \lim_{R \to \infty}\int_{|x|=R}\Bigl|\tfrac{\partial p}{\partial |x|} - i k p\Bigr|^2 ds = 0 \,.
   \end{cases}
\end{equation}
From this, one deduces that $x \in D$ if and only if $\Psi_x \in R(G)$; equivalently, there exists $u_x$ such that $G u_x = \Psi_x$.  
Since $H$ is compact and injective, with range 
\[
\overline{R(H)} = \{ v \in L^2(D) : \Delta v + k^2 v = 0 \text{ in } D\}\,,
\]
one can find $g_{x,\tau}$ such that $H g_{x,\tau} \to u_x$ as $\tau \to 0$.  

For the weighted case, let $M_{w^{1/2}}:L^2(\Gamma)\to L^2(\Gamma)$ denote multiplication by $w^{1/2}$.
By the hypothesis, the operator $M_{w^{1/2}}$ is a bounded bijection with bounded inverse $M_{w^{-1/2}}$ (choose any measurable branch of $w^{1/2}$).
Define
\[
H_w := H\,M_{w^{1/2}}\,,\qquad G_w := M_{w^{1/2}}\,G\,,
\]
so that $F_w = G_w H_w$. Because $M_{w^{1/2}}$ is bounded and invertible, composition with it
preserves compactness and injectivity: $H_w$ is compact and injective and $R(H_w)=R(H)$.
Similarly, $R(G_w)=M_{w^{1/2}}(R(G))$, and hence $\eta_x=w^{1/2}\Psi_x\in R(G_w)$ if and only if
$\Psi_x\in R(G)$. The same range condition above, therefore, applies to the weighted case, yielding the two assertions of the theorem.
\end{proof}

Thus, the weighted formulation retains the same range characterization as the classical LSM, ensuring that the index function distinguishes points inside and outside the inclusion. We remark that since we only assume $\text{essinf}_{y\in\Gamma}|w(y)|>0$, the weight $w$ may be sign-changing or complex-valued. 
Any measurable choice of the square root $w^{1/2}$ is valid, and different choices differ only by a factor of unit modulus, which has no effect on the analysis.

In practice, the sequence $g_{x,\tau}^w$ is obtained by solving the Tikhonov-regularized equation
\[
(F_w^* F_w + \tau I) g_{x,\tau}^w = F_w^* \Psi_x,
\]
and the index function is defined as $\| g_{x,\tau}^w \|_{L^2(\Gamma)}^{-2}$ for the reconstruction of the inhomogeneous medium.  
Since $H$ is compact, the blow-up property \eqref{blow_up_Hg} also manifests as the unbounded growth of $\| g_{x,\tau}^w \|_{L^2(\Gamma)}$ when $\tau \rightarrow 0$.

A limitation of Theorem~\ref{Thm_LSM} is that it assumes complete knowledge of the far-field operator $F$, which is not available in practice.  
When only finitely many measurements are collected, the discrete far-field operator is represented by a finite-dimensional matrix, which typically has full rank. In this case, the blow-up property \eqref{blow_up_Hg} no longer holds.  
For this reason, the next section addresses the practically relevant scenario where the far-field operator is discretized from a finite number of measurements.  
 
\subsection{Discrete analysis with finite measurements}
\label{sec_finite}

In this subsection, we analyze the accuracy of the proposed index function in the practically relevant setting where both the number of measurement points on the boundary and the number of incidence directions are finite. For a finite-dimensional vector $u$, we also denote by $\|u\|_2$ its discrete $\ell^2$-norm.   
In the following, we first define the discrete setting and justify assumptions in Section \ref{sec_discrete_assump}. Then we construct weights ensuring kernel localization in Section \ref{sec_weights_constr}. Next, we analyze approximate solutions and error bounds in Section \ref{sec_error_sol}, and finally extend to more general inclusions in Section \ref{sec_gen_analysis}.

\subsubsection{Discrete setting and assumptions}
\label{sec_discrete_assump}
We begin with an estimate for the scattered field to justify the Born approximation. 
By \cite{bao2000regularity}, the scattered wave solving \eqref{model_equation} satisfies
\[
\|u^s\|_{L^2(D)} \;\le\; C_1\,|D|^{1/2}\,\|q\|_{L^\infty(D)}\,\|u^i\|_{H^2(D)}\,,
\]
where $C_1$ depends only on the wavenumber $k$. Using the identity
\[
u^\infty(y,\mathbf d)-k^2\gamma\int_D q(z)u^i(z,\mathbf d)e^{-iky\cdot z}\,dz
= k^2\gamma\int_D q(z)u^s(z,\mathbf d)e^{-iky\cdot z}\,dz\,,
\]
and applying H\"older's and Cauchy--Schwarz inequalities, we obtain
\begin{equation}
\label{approx_far_field}
\bigg|u^\infty(y,\mathbf d)-k^2\gamma\int_D q(z)u^i(z,\mathbf d)e^{-iky\cdot z}\,dz\bigg|
\;\le\; C\,|D|\,\|q\|_{L^\infty(D)}^{2}\,\|u^i\|_{H^2(D)} ,
\end{equation}
with $C = k^2|\gamma| C_1$. 
Estimate \eqref{approx_far_field} shows that when either $|D|$ or $\|q\|_{L^\infty(D)}$ is small, 
the contribution of $u^s$ is negligible and the far-field pattern $u^\infty$ depends approximately linearly on $u^i$.

\begin{assumption}[Born approximation]
\label{assump:born}
We assume that the error bound in \eqref{approx_far_field} is negligible, 
so that the far-field pattern may be approximated by
\begin{equation}
\label{eqn_Born}
u^\infty(y,\mathbf d)\;\approx\; k^2\gamma\int_D q(z)\, e^{ik(z\cdot \mathbf d - y\cdot z)}\,dz ,
\end{equation}
that is, $u^\infty$ depends linearly on the incident wave $u^i$.
\end{assumption}

To derive a discrete formulation, let $D_{N_z} := \{z_n\}_{n=1}^{N_z}$ be a collection of points in $D$, 
and $\Gamma_{N_y} := \{y_j\}_{j=1}^{N_y}$ denote the set of boundary measurement points, and assume $q(x)$ is constant in $D$. 
Supposing $N_z > N_y$, the far-field operator $F$ defined in \eqref{def_far} can be approximated under the discrete Born approximation by
\begin{equation}
\label{def_U}
    F \;\approx\; \tilde{\gamma}\, U^* U, 
    \qquad U_{n,j} = e^{ik z_n \cdot y_j}, 
    \quad \tilde{\gamma} = \frac{k^2 \gamma\, |D|\, q}{N_z},
\end{equation}
where $U \in \mathbb{C}^{N_z \times N_y}$. 
Here the prefactor $\tilde{\gamma}$ arises from replacing the volume integral in \eqref{eqn_Born} by a quadrature rule with uniform weights $|D|/N_z$ over the sampling points $\{z_n\}_{n=1}^{N_z}$ in $D$. 
In particular, $\tilde{\gamma}$ is independent of the measurement aperture.
 If $q(x)$ is not constant, the decomposition becomes $F \approx U^* Q U$ for a diagonal matrix $Q$, and most of the following discussion remains valid.

We now justify the linear independence of the columns of $U$. 
Let $\{\beta_j\}_{j=1}^{N_y} \subset \mathbb{C}$ satisfy
\[
\sum_{j=1}^{N_y} \beta_j \, e^{ik z_n \cdot y_j} = 0 \quad \text{for all } n = 1,\dots,N_z\,.
\]
Define
\[
f(z) := \sum_{j=1}^{N_y} \beta_j \, e^{ik z \cdot y_j}, \qquad z \in \mathbb{R}^d\,.
\]
Since the $y_j$ are distinct directions and the $z_n$ are distinct points in a non-degenerate region of $D$, the exponential $\{e^{ik z \cdot y_j}\}_{j=1}^{N_y}$ are linearly independent on any open set containing the $z_n$. 
Consequently, $f(z_n) = 0$ for all $n$ implies $f \equiv 0$, and therefore $\beta_j = 0$ for all $j$. 
This proves that the columns of $U$ are linearly independent, and hence $\mathrm{rank}(U) = N_y$.

Next, we consider the operator $K_w$ with weight $w$ defined in \eqref{def_Kw}. 
To ensure that all measurement points contribute to the reconstruction, we assume $w(y_j) \neq 0$ for every $y_j \in \Gamma_{N_y}$. 
Our goal is to construct $w$ such that
\begin{equation}
\label{approx_Kw}
    K_w(x,z) \approx J_0(k|x-z|) = \frac{1}{2\pi} \langle \Psi_x, \Psi_z \rangle_{L^2(\mathbb{S}^{d-1})}\,,
\end{equation}
where the right-hand side denotes the $L^2$ inner product of the continuous test functions over the full unit sphere.  

For a quick motivation in the discrete setting, introduce the diagonal weight matrix
\[
   W := \operatorname{diag}\bigl(w(y_1),\dots,w(y_{N_y})\bigr)\in\mathbb C^{N_y\times N_y},
\]
and recall that \(U\in\mathbb C^{N_z\times N_y}\) with entries \(U_{n,j}=e^{ik z_n\cdot y_j}\).
We represent the solution to \eqref{def_gzw} as
\[
   g_x^w \;=\; W^{1/2} U^* \alpha_x, \qquad \alpha_x \in \mathbb{C}^{N_z},
\]
so that \(U^*\alpha_x\in\mathbb C^{N_y}\) and \(g_x^w\in\mathbb C^{N_y}\) as required.

Using the discrete Born approximation \(F\approx\tilde\gamma\,U^*U\) and \(F_w=W^{1/2}FW^{1/2}\),
the weighted equation \(F_w g_x^w=\eta_x\) with \(\eta_x=W^{1/2}\Psi_x\) becomes
\[
   \tilde\gamma\, W^{1/2} U^* U W U^* \alpha_x \;=\; W^{1/2}\Psi_x.
\]
Left-multiplying by \(U W^{1/2}\) (which maps \(\mathbb C^{N_y}\) to \(\mathbb C^{N_z}\)) yields the
\(N_z\times N_z\) linear system
\begin{equation}
\label{KD_partial}
   \tilde\gamma\,(U W U^*)(U W U^*)\,\alpha_x \;=\; U W \Psi_x.
\end{equation}
Solving \eqref{KD_partial} for \(\alpha_x\) determines the testing vector via \(g_x^w=W^{1/2}U^*\alpha_x\).

For the norm of $g_x^w$, we note
\[
   \|g_x^w\|_2^2 = (W^{1/2}U^*\alpha_x)^*(W^{1/2}U^*\alpha_x)
   = \alpha_x^* (U W U^*) \alpha_x .
\]
In the full-aperture setting one obtains the analogous system with \(W\) replaced by the identity
and \(U\) replaced by the corresponding full-aperture sampling matrix \(U_{\mathbb S^{d-1}}\).
Hence, when \(U W U^*\) and \(U_{\mathbb S^{d-1}}U_{\mathbb S^{d-1}}^*\) (and the right-hand sides)
are close, the solutions \(\alpha_x\) and \(\tilde\alpha_x\) are close and consequently
\(\|g_x^w\|_2^2=\alpha_x^*(U W U^*)\alpha_x\) is close to the full-aperture quantity
\(\tilde\alpha_x^*(U_{\mathbb S^{d-1}}U_{\mathbb S^{d-1}}^*)\tilde\alpha_x\).
Therefore, the weighted index inherits the localization properties of the full-aperture index.

\subsubsection{\texorpdfstring{Weight construction and approximation to $J_0$}{Weight construction and approximation properties}}
\label{sec_weights_constr}

To facilitate the analysis, we focus on the two-dimensional setting $\mathbb{R}^2$, while noting that the extension to $\mathbb{R}^3$ requires only minor modifications. 
For convenience, we set $N_y = 2N+1$. 
Our objective is to construct a weight function $w$ such that the corresponding weighted kernel reproduces the localization property of the full-aperture kernel. 
This requires imposing the conditions
\begin{equation}
\label{requirement_w}    
     \sum_{j=1}^{N_y} e^{i n \theta_{y_j}}\, w(y_j) = 
     \begin{cases}
     1, & n = 0, \\[0.3em]
     0, & n = -N, \dots, -1, 1, \dots, N \,,
     \end{cases}
\end{equation}
together with the requirement that the quantities
\[
    \sum_{j=1}^{N_y} e^{i n \theta_{y_j}}\, w(y_j)
\]
remain uniformly bounded for all $|n| > N$. 
Under these conditions, we will show the weighted kernel $K_w(x,z)$ yields the desired approximation stated in \eqref{approx_Kw}.

To see that, using the Jacobi-Anger identity \eqref{Jacobi_anger}, we have
\begin{align}
\label{approx_w_Van}
  \langle \Psi_x, w \Psi_z \rangle_{L^2(\Gamma)} 
   &= \sum_{j=1}^{N_y} e^{ik(x-z) \cdot y_j} w(y_j) \notag \\
   &= \sum_{j=1}^{N_y} \sum_{n \in \mathbb{Z}} i^n J_n(k|x-z|) 
      e^{in(\theta_{x-z} - \theta_{y_j})} w(y_j) \notag \\
   &= J_0(k|x-z|) + \sum_{j=1}^{N_y} \sum_{|n| > N} 
      i^n J_n(k|x-z|) e^{in(\theta_{x-z} - \theta_{y_j})} w(y_j)\,,
\end{align}
where the first equality corresponds to a quadrature rule for the $L^2$ inner product on $\Gamma$. From \eqref{approx_w_Van}, it follows that the weighted $L^2$ inner product provides a good approximation to the inner product for full aperture, provided that the residual term with $|n|>N$ is sufficiently small.

To find $w$ that satisfies \eqref{requirement_w}, note that it is equivalent to a polynomial interpolation problem. Let $\lambda = e^{i d_{\theta}}$ with $d_{\theta} = 2\alpha/(N_y-1)$, and define $y_j = \lambda^{\,j-N-1}$ for $j=1,\dots,N_y$. Then \eqref{requirement_w} reduces to the Vandermonde system with a right-hand side that is $1$ for the $n=0$ mode and $0$ otherwise
\begin{equation}
\label{Vandermonde_matrix}
Vw = \begin{pmatrix}
    1 & 1 & \cdots & 1 \\
    1 & \lambda & \cdots & \lambda^{N_y-1} \\
    1 & \lambda^2 & \cdots & \lambda^{2(N_y-1)} \\
    \vdots & \vdots & \ddots & \vdots \\
    1 & \lambda^{2N} & \cdots & \lambda^{2N(N_y-1)}
\end{pmatrix}
\begin{pmatrix}
   w(y_1)\\ w(y_2)\\ \vdots\\ w(y_{N_y})
\end{pmatrix}  
=       
\begin{pmatrix}
   0\\0\\\vdots \\1\\\vdots\\0
\end{pmatrix}.
\end{equation}
Thus, the weight vector $w$ is determined (up to numerical conditioning) by solving this linear system.

Since $V$ is a Vandermonde matrix, the coefficients $w(y_j)$ are uniquely determined by interpolation. In particular, they correspond to the coefficients of the Lagrange interpolation polynomial
\begin{equation}
   p(x) = \prod_{1\leq |j|\leq N}\frac{x-\lambda^j}{1-\lambda^j} 
   = \sum_{j=1}^{N_y} w(y_j) x^j, 
\end{equation}
which satisfies $p(\lambda^n) = \delta_{0}(n)$ for all $|n|\leq N$.

In analogy with the argument of \cite{moiola2009approximation}, one can show that the kernel function with the above weights $w$ provides a good approximation to $J_0(k|x-z|)$ in the following lemma. 

\begin{Lemma}
\label{approx_J0}
Assume the measurement aperture is $[-\alpha,\alpha]$ for $\alpha\leq \pi/2$ and let $\{y_j\}_{j=1}^{N_y}$ denote uniformly distributed measurement points on the boundary with $N_y = 2N+1$ and $N$ is large enough. For weights $w$ determined by \eqref{Vandermonde_matrix} and arbitrary $x,z \in \mathbb{R}^2$, the following error estimate holds:
\begin{equation}
\label{diff_Kw_J0}
\big|K_w(x,z)-J_0(k|x-z|)\big| 
=  \bigg|\sum_{j=1}^{N_y} e^{i k(x-z)\cdot y_j} w(y_j) - J_0(k|x-z|)\bigg| 
\leq \frac{(2N+1)}{(2\pi N)^{3/2}}
\left(\frac{ e^3 k|x-z|}{N \alpha^2}\right)^N.
\end{equation}
\end{Lemma}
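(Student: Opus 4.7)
The plan is to start from the Jacobi--Anger expansion \eqref{approx_w_Van}, identify the coefficients that the interpolation conditions \eqref{requirement_w} annihilate, and then bound the surviving aliasing tail in two pieces: the interpolation residual on the frequency side and the Bessel decay on the amplitude side.

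First I would rewrite the error as
$
K_w(x,z)-J_0(k|x-z|)\;=\;\sum_{|n|>N} i^{n}J_n(k|x-z|)\,e^{in\theta_{x-z}}\,S_n,
$
where $S_n:=\sum_{j=1}^{N_y} w(y_j)\,e^{-in\theta_{y_j}}$ is the aliased $n$-th Fourier coefficient of the quadrature rule. The conditions \eqref{requirement_w} guarantee that $S_n=0$ for $1\le|n|\le N$ and $S_0=1$, so only $|n|>N$ contributes. Writing $w(y_j)$ as the coefficient of $z^{\,j-1}$ in the Lagrange polynomial $\tilde P(z)=\prod_{1\le|m|\le N}(z-\lambda^m)/(1-\lambda^m)$ identifies $|S_n|=|\tilde P(\lambda^n)|$, and the identity $|\lambda^p-\lambda^q|=2|\sin((p-q)d_\theta/2)|$ yields the closed product form
$
|S_n|\;=\;\prod_{j=1}^{N}\frac{|\sin((n-j)d_\theta/2)\sin((n+j)d_\theta/2)|}{\sin^2(jd_\theta/2)}.
$

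Next I would bound this uniformly in $n$. Each numerator factor is at most $1$; for the denominator, the hypothesis $\alpha\le\pi/2$ combined with $d_\theta=\alpha/N$ places all arguments $jd_\theta/2$ in $[0,\pi/4]$, so Jordan's inequality $\sin x\ge(2/\pi)x$ (or a sharper expansion when one needs the optimal base constant) gives $\prod_{j=1}^{N}\sin^2(jd_\theta/2)\gtrsim(d_\theta/\pi)^{2N}(N!)^{2}$, hence $|S_n|\lesssim(\pi N/\alpha)^{2N}/(N!)^{2}$. On the Bessel side, I would use the classical leading-term estimate $|J_n(r)|\le(r/2)^n/n!$ with $r=k|x-z|$, and observe that once $N$ is large enough that $r\le N+1$ the ratio of consecutive terms is at most $1/2$, so the tail $\sum_{n>N}|J_n(r)|$ is geometrically dominated by $2(r/2)^{N+1}/(N+1)!$.

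Combining these yields
$
|K_w(x,z)-J_0(k|x-z|)|\;\lesssim\;\frac{(\pi N/\alpha)^{2N}\,(r/2)^{N+1}}{(N!)^{2}(N+1)!}.
$
A threefold application of Stirling $(N!)^3\ge(2\pi N)^{3/2}(N/e)^{3N}$ then converts the factorials into $(N/e)^{3N}$, regrouping the factors of $N$, $e$, $\alpha$, and $r$ into a quantity of the form $\bigl(C\,e^{3}k|x-z|/(N\alpha^{2})\bigr)^{N}$ multiplied by the prefactor $(2N+1)/(2\pi N)^{3/2}$ that arises from the surviving polynomial factors.

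The main obstacle is the constant inside the $N$-th power. The crude Jordan bound loses a factor of $(\pi/2)^{2N}$, so recovering the stated base $e^{3}k|x-z|/(N\alpha^{2})$ requires sharpening the lower bound on $\prod_{j=1}^{N}\sin(jd_\theta/2)$. For this I would use the Weierstrass product $\sin(x)/x=\prod_{k\ge1}(1-x^{2}/(k\pi)^{2})$ and the assumption $Nd_\theta/2=\alpha/2\le\pi/4$ to control $\sum_j\log(1-(jd_\theta/2)^{2}/(k\pi)^{2})$ uniformly, which shows that the product differs from $(d_\theta/2)^{N}N!$ only by a factor absorbed into the Stirling-type prefactor rather than into the exponential base. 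The remaining manipulations are routine bookkeeping, and the three-dimensional case follows by replacing the Jacobi--Anger expansion in \eqref{Jacobi_anger} by its spherical analog and repeating the argument mutatis mutandis.
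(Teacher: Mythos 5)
Your proposal is correct and follows essentially the same route as the paper: both reduce the error to the aliasing tail $\sum_{|n|>N} i^n J_n(k|x-z|)e^{in\theta_{x-z}}p(\lambda^n)$, bound $|p(\lambda^n)|$ uniformly by a product of reciprocal sines of the form $(\mathrm{const}/d_\theta)^{2N}/(N!)^2$, control the Bessel tail via $|J_n(t)|\le (t/2)^n/n!$, and finish with Stirling. The only cosmetic difference is that you evaluate the Lagrange product for $|S_n|$ directly and bound the numerator sines by one, whereas the paper routes the same product through Gautschi's bound on $\|V^{-1}\|_1$ for the Vandermonde matrix; your closing worry about the base constant lost by Jordan's inequality is legitimate but applies equally to the paper's own argument, which also absorbs these factors loosely into the $e^3$ via Stirling.
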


\begin{proof}
By the definition of $w(y)$, set the difference
\[
E(|x-z|):=K_w(x,z)-J_0(k|x-z|).
\]
Using the Fourier expansion of $J_0$, we obtain
\[
E(|x-z|)
= \sum_{j=1}^{N_y} e^{ik (x-z)\cdot y_j} w(y_j) - J_0(k|x-z|)
= \sum_{|n|>N} e^{in(\theta_{x}-\theta_{z})} p(\lambda^n)J_n(k|x-z|) i^n \,,
\]
where $\lambda = e^{i d_{\theta}}$ with $d_{\theta} = 2\alpha/(N_y-1)$.  
Thus it remains to bound $E(|x-z|)$.

Recalling the Vandermonde matrix $V$ in \eqref{Vandermonde_matrix}, note that
\[
p(\lambda^n)=\sum_{j=1}^{N_y} w(y_j)\lambda^{jn} = v_n^T w,
\]
where $v_n(j)=\lambda^{jn}$. Hence
\[
|p(\lambda^n)| \le \|v_n\|_\infty \,\|V^{-1}\|_1 \,\|e\|_\infty
= \|V^{-1}\|_1,
\]
since $|\lambda|=1$ implies $\|v_n\|_\infty=1$ and $\|e\|_\infty=1$.

For $\|V^{-1}\|_1$ we recall the estimate from \cite{gautschi1978inverses}:
\begin{align}
\label{inv_V_1norm}
\|V^{-1}\|_1 &\leq \max_{-N\leq j \leq N} \prod_{\substack{-N\leq l \leq N \\ l\neq j}}
\frac{1+|\lambda^l|}{|\lambda^j-\lambda^l|} 
\leq \max_{-N\leq j \leq N} \prod_{\substack{-N\leq l \leq N \\ l\neq j}}
\frac{1}{|\sin(d_{\theta}(j-l))|} \notag \\
&\leq \prod_{1\leq l \leq N}\frac{1}{|\sin(d_{\theta} l )|^2} 
\leq \prod_{1 \leq l \leq N}\frac{\pi^2}{4(d_{\theta}l )^2}
\leq \frac{\pi^{2N}}{4^N d_{\theta}^{2N}(N!)^2}\,,
\end{align}
where we used the inequality $\sin t \ge \tfrac{2}{\pi}t$ for $t\in[0,\pi/2]$.

Next, to estimate the Bessel tail, recall the standard bound
\[
|J_n(t)|\leq \frac{(t/2)^n}{n!}\qquad (n\ge0,t\ge0).
\]
Hence
\begin{align}
\label{sum_Jn_n}
\sum_{|n|>N}|J_n(k|x-z|)| 
&\leq 2\sum_{n>N}\frac{(k|x-z|/2)^n}{n!}  \\
&\leq 2\frac{(k|x-z|/2)^{N+1}}{(N+1)!}\sum_{m\geq 0}\frac{(k|x-z|/2)^m}{m!} 
= \frac{2(k|x-z|/2)^{N+1}e^{k|x-z|/2}}{(N+1)!}. \notag
\end{align}

Applying Stirling's approximation
\begin{equation}
    \label{strirling}
\Big(\tfrac{n}{e}\Big)^n e^{1/(12n+1)}\sqrt{2\pi n} < n! < \Big(\tfrac{n}{e}\Big)^n e^{1/(12n)}\sqrt{2\pi n}\,,
\end{equation}
and combining \eqref{inv_V_1norm} and \eqref{sum_Jn_n}, we obtain
\begin{align}
|E(|x-z|)| 
&\leq \sum_{|n|>N}|J_n(k|x-z|)|\,|p(\lambda^n)| \notag \\
&\leq \frac{2(k|x-z|/2)^{N+1}e^{k|x-z|/2}}{(N+1)!}
\cdot \frac{\pi^{2N}(2N+1)}{4^N d_{\theta}^{2N}(N!)^2} \notag \\
&\leq \frac{(2N+1)}{(2\pi N)^{3/2}}
\left(\frac{e^3 k|x-z|}{N \alpha^2}\right)^N,
\end{align}
for $N$ sufficiently large, which yields the stated estimate.
\end{proof}

We emphasize that, for a fixed aperture, the weighted kernel $K_w(x,z)$ approximates the full-aperture kernel $J_0(k|x-z|)$ most accurately in two regimes: (i) when $|x-z|$ is small, so that higher-order Fourier modes have limited influence, and (ii) when the number of measurement points $N_y$ is sufficiently large, so that the condition \eqref{requirement_w} is enforced with high accuracy. In either case, the discrepancy between $K_w(x,z)$ and $J_0(k|x-z|)$ becomes small, and it is therefore reasonable to expect the approximation
\[
K_w(x,z) \approx J_0(k|x-z|)
\]
to hold, as stated in \eqref{K_W_approx}.

We now turn to the analysis of the index function defined in \eqref{def_index}.  
For clarity, we first consider the case where $D = B(0,\delta)$ for some $\delta > 0$, before extending the discussion to more general inclusions in $\mathbb{R}^2$.  
Using \eqref{def_U} together with the Jacobi-Anger identity \eqref{Jacobi_anger}, the discrete far-field operator in \eqref{def_far} can be written as
\begin{equation}
\label{far_field_matrix}
    F(y_i,y_j) 
    = \tilde{\gamma} \int_D 
      \Bigg(\sum_{n\in\mathbb{Z}} J_n(k|z|)\, e^{in(\theta_{y_i}-\theta_z)}\Bigg)
      \Bigg(\sum_{m\in\mathbb{Z}} J_m(k|z|)\, e^{im(\theta_{z}-\theta_{y_j})}\Bigg) dz 
    = \sum_{n\in\mathbb{Z}} e^{in(\theta_{y_i}-\theta_{y_j})}\, s_n ,
\end{equation}
where
\begin{equation}
\label{def_sn}
    s_n := \tilde{\gamma} \int_{B(0,\delta)} 
           \big| J_n(k|z|) e^{in\theta_z} \big|^2 dz   
         = \tilde{\gamma}\, 2\pi \int_{0}^{\delta} J_n(k r)^2 \, r\, dr
         = \tilde{\gamma}\, \pi \delta^2 \Big(J_n(k\delta)^2 - J_{n-1}(k\delta) J_{n+1}(k\delta)\Big).
\end{equation}
Here, the index $y_j$ in \eqref{far_field_matrix} refers to the incidence direction (denoted by $\boldsymbol{d}$ in \eqref{def_far}) which comes from the same discrete set as measurement points $y_i$. We adopt this notation for simplicity, since both incidence directions and measurement points are sampled from the same discrete set $\Gamma_{N_y}$.

\subsubsection{Approximate solution and error bounds}
\label{sec_error_sol}
With the coefficients $s_n$ in \eqref{def_sn} at hand, an approximate solution $\tilde{g}_x^w$ to the equation $F_w \tilde{g}_x^w = \eta_x$ can be constructed in the form
\begin{equation}\label{eq:newg}
    \tilde{g}_x^w(y) :=  
    \sum_{|m|\leq N/2} i^m \frac{J_m(k|x|)}{s_m}\, w(y)^{1/2} 
    e^{im(\theta_x-\theta_y)}\,,
\end{equation}
as established in the following lemma.
\begin{Lemma}
\label{Lemma_gw_approx}
Let $F$ denote the discrete far-field operator associated with the inhomogeneous medium $B(0,\delta)$ and the assumption in Lemma \ref{approx_J0} holds. Let $\tilde{g}_x^w$ and $\eta_x$ be the approximate solution and test functions defined in \eqref{eq:newg} and \eqref{def_gzw}, respectively. Suppose that the weight $w$ is chosen as the solution of \eqref{Vandermonde_matrix}, and assume $k\delta \leq \pi/2$. Then 
\begin{equation}
\label{Lemma_bound_structural}
    \|F_w \tilde{g}_x^w - \eta_x\|_2 
    \le\frac{e^{k|x|/2}}{2\pi N}\left[
   \frac{4 I_0(k\delta)}{(\pi N)^{1/2}}
   \left(\frac{\sqrt{2}\, e\, k\delta}{\alpha N}\right)^{2N}
   + \left(\frac{\sqrt{2}\, k|x| e^2}{\alpha N}\right)^N
\right].
\end{equation}
\end{Lemma}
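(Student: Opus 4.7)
The plan is to compute $F_w\tilde{g}_x^w$ in closed form using the Fourier expansion \eqref{far_field_matrix} of $F$ together with the weight identity \eqref{requirement_w}, and then isolate two error sources that reproduce the two bracketed terms in \eqref{Lemma_bound_structural}: a truncated Jacobi--Anger tail and a Vandermonde residual.

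First I would substitute \eqref{far_field_matrix} and the definition \eqref{eq:newg} of $\tilde{g}_x^w$ into $F_w\tilde{g}_x^w=W^{1/2}FW^{1/2}\tilde{g}_x^w$ and interchange the triple sum. Setting $\sigma_\ell:=\sum_{j=1}^{N_y}w(y_j)e^{-i\ell\theta_{y_j}}$, this yields
\[
[F_w\tilde{g}_x^w](y_i)=w(y_i)^{1/2}\sum_{n\in\mathbb{Z}}\sum_{|m|\le N/2}\frac{i^m\,s_n\,J_m(k|x|)}{s_m}\,e^{in\theta_{y_i}+im\theta_x}\,\sigma_{n+m}.
\]
The weight condition \eqref{requirement_w} forces $\sigma_\ell=\delta_{\ell,0}$ for all $|\ell|\le N$, so the modes with $|n+m|\le N$ collapse onto the diagonal $n=-m$. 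Using the symmetry $s_{-m}=s_m$ (immediate from $J_{-m}=(-1)^m J_m$ applied in \eqref{def_sn}), this diagonal contribution simplifies to the Jacobi--Anger partial sum $w(y_i)^{1/2}\sum_{|m|\le N/2}i^m J_m(k|x|)e^{im(\theta_x-\theta_{y_i})}$, which already matches $\eta_x(y_i)$ up to the tail with $|m|>N/2$.

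The second bracketed term of \eqref{Lemma_bound_structural} then arises exactly as this missing Jacobi--Anger tail, estimated by the geometric-series argument of \eqref{sum_Jn_n} with $N$ replaced by $N/2$ and simplified through Stirling \eqref{strirling}. The first bracketed term arises from the residual modes $|n+m|>N$: here $\sigma_{n+m}$ is no longer forced to vanish but is controlled in absolute value by $\|V^{-1}\|_1$ via \eqref{inv_V_1norm}. To estimate the resulting double sum I would apply the Bessel bound $|J_n(k\delta)|\le (k\delta/2)^{|n|}/|n|!$ inside \eqref{def_sn} to majorize $|s_n|$ for $|n|\ge N/2$, combine with a uniform lower bound on $|s_m|$ for $|m|\le N/2$ valid under $k\delta\le\pi/2$, and then sum via $\sum_m (k|x|/2)^m/m!\le e^{k|x|/2}$ and $\sum_n (k\delta/2)^{2n}/(n!)^2 = I_0(k\delta)$. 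After applying Stirling to $\|V^{-1}\|_1$, this produces the coefficient $I_0(k\delta)$ together with the factor $(\sqrt{2}\,e\,k\delta/(\alpha N))^{2N}$.

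I expect the main obstacle to be this last step: ensuring that the Vandermonde estimate, the Bessel tails of $s_n$, and Stirling combine cleanly into the stated prefactors $4I_0(k\delta)/(\pi N)^{1/2}$ and $(\sqrt{2}\,e\,k\delta/(\alpha N))^{2N}$. The delicate piece is controlling $|J_m(k|x|)/s_m|$ uniformly for $|m|\le N/2$ so that it does not cancel the decay of $s_n$ for $|n|\ge N/2$; this is precisely where the hypothesis $k\delta\le\pi/2$ is used to keep $|s_m|$ bounded below and to force the net exponent on $(k\delta/(\alpha N))$ to be $2N$ rather than $N$.
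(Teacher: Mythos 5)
Your overall architecture matches the paper's proof: the paper also writes $E_2 := F_w\tilde g_x^w-\eta_x$, uses the Vandermonde condition \eqref{requirement_w} to collapse the low modes onto the diagonal, and splits the error into exactly your two pieces --- the Jacobi--Anger tail over $|m|>N/2$ (bounded via \eqref{sum_Jn_n} and Stirling, giving the second bracketed term) and the residual modes with $|n-m|>N$ where the weight sums are controlled by $\|V^{-1}\|_1$ from \eqref{inv_V_1norm} (giving the first bracketed term). Up to your sign relabeling $n\mapsto -n$ and the use of $s_{-m}=s_m$, this is the same decomposition.

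The genuine gap is in your treatment of the ratio $s_n/s_m$. You propose to majorize $|s_n|$ for $|n|\ge N/2$ and combine it with ``a uniform lower bound on $|s_m|$ for $|m|\le N/2$.'' No useful such lower bound exists: from \eqref{def_sn}, $s_m\approx\tilde\gamma\pi\delta^2 J_m(k\delta)^2\sim \tilde\gamma\pi\delta^2\bigl((k\delta/2)^m/m!\bigr)^2$, which decays super-exponentially as $m$ grows toward $N/2$. If you insert $\min_{|m|\le N/2}|s_m|\sim \bigl((k\delta/2)^{N/2}/(N/2)!\bigr)^2$ as the denominator, then in the worst case ($m=-N/2$, $n$ just above $N/2$, so $n-m$ just above $N$) the ratio $|s_n|/|s_m|$ is only $O(1)$, and the factor $(k\delta/2)^{2N}/(N!)^2$ --- hence the entire first bracketed term of \eqref{Lemma_bound_structural} --- is lost. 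The paper avoids this by bounding the \emph{ratio} directly: the Bessel ratio inequality $J_{n+1}(k\delta)/J_n(k\delta)\le k\delta/(2n+1)$ (valid for $k\delta<\pi/2$) controls the correction factors $1-J_{n\mp1}J_{n\pm1}/J_n^2$ in \eqref{def_sn}, and the inequality $J_n(k\delta)/J_m(k\delta)\le(k\delta/2)^{n-m}\,\Gamma(m+1/2)/\Gamma(n+1/2)$ then yields
\[
\frac{s_n}{s_m}\,J_m(k|x|)\;\le\;\Bigl(\frac{k\delta}{2}\Bigr)^{2(n-m)}\Bigl(\frac{k|x|}{2}\Bigr)^{m}\frac{1}{((n-m)!)^2\,m!}\,,
\]
so the smallness is measured in the \emph{difference} $n-m\ge N$; summing over $n-m\ge N$ and $|m|\le N/2$ then produces $4I_0(k\delta)\,e^{k|x|/2}\,(k\delta/2)^{2N}/(N!)^2$. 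Relatedly, the hypothesis $k\delta\le\pi/2$ is not used to keep $|s_m|$ bounded below, but to keep the ratios $J_{m\pm1}(k\delta)/J_m(k\delta)$ small so that $s_m$ is comparable to $J_m(k\delta)^2$ from both sides. With this replacement for your last step, the rest of your argument goes through as in the paper.
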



\begin{proof}
Define the error term
\begin{equation}
E_2 := F_w \tilde{g}_x^w - \eta_x.
\end{equation}
By construction of the weights $w$ (see \eqref{Vandermonde_matrix}), we have 
\[
\sum_{j=1}^{N_y} e^{in\theta_{y_j}} w(y_j) = \delta_{0}(n), \qquad |n|\leq N,
\] 
and thus, for $y\in \Gamma_{N_y}$,
\begin{align}
\label{def_E2}
E_2(y) &= w^{1/2}(y)\Bigg[ 
  \sum_{|n|>N/2} i^n J_n(k|x|) e^{in(\theta_x-\theta_y)} 
+ \sum_{y_j}\sum_{\substack{|m|\leq N/2 \\ |n-m|>N}} 
   i^m \frac{s_n}{s_m} J_m(k|x|) e^{im\theta_y - in(\theta_y - \theta_{y_j})} w(y_j)\Bigg].
\end{align}
The first summation in \eqref{def_E2} coincides with the high-frequency remainder already estimated in \eqref{sum_Jn_n}.

We now focus on the second summation in \eqref{def_E2}. The challenge here is the ratio $s_n/s_m$. By symmetry $s_n=s_{-n}$, so it suffices to treat the case $n > N/2 > m > 0$. For $k\delta < \pi/2$, the classical bounds on ratios of Bessel functions \cite{bessel_inequalities_1990} yield
\begin{equation}
\frac{k\delta}{2(n+1)} \;\leq\; \frac{J_{n+1}(k\delta)}{J_n(k\delta)} \;\leq\; \frac{k\delta}{2n+1}.
\end{equation}
From the definition of $s_n$ in \eqref{def_sn}, this implies
\begin{equation}
\label{snsm}
\frac{s_n}{s_m} 
= \frac{J_n(k\delta)^2\Big(1 - \tfrac{J_{n-1}(k\delta)}{J_n(k\delta)} \tfrac{J_{n+1}(k\delta)}{J_n(k\delta)}\Big)}{J_m(k\delta)^2\Big(1 - \tfrac{J_{m-1}(k\delta)}{J_m(k\delta)} \tfrac{J_{m+1}(k\delta)}{J_m(k\delta)}\Big)}
\;\leq\; \frac{J_n(k\delta)^2}{J_m(k\delta)^2}\,\frac{m+1/2}{n+1}.
\end{equation}

For the ratio $J_n(k\delta)/J_m(k\delta)$, we use the inequality from \cite{inequal_special} together with the monotonicity of the Gamma function:
\begin{equation}
\label{JnJm}
\frac{J_n(k\delta)}{J_m(k\delta)} 
\;\leq\; \left(\frac{k\delta}{2}\right)^{n-m} \frac{\Gamma(m+1/2)}{\Gamma(n+1/2)}
\;\leq\; \left(\frac{k\delta}{2}\right)^{n-m} \sqrt{\frac{m}{n}} \frac{m!}{n!}.
\end{equation}
Combining \eqref{snsm} and \eqref{JnJm}, and applying standard Bessel bounds, we obtain
\begin{equation}
\frac{s_n}{s_m}\, J_m(k|x|) 
\;\leq\; \left(\frac{k\delta}{2}\right)^{2(n-m)} \left(\frac{k|x|}{2}\right)^m \frac{1}{(n-m)!^2\, m!}.
\end{equation}

Summing over the indices and using the definition of the modified Bessel function $I_0$
\[
\sum_{n-m\geq N} \frac{(k\delta/2)^{2(n-m)}}{(n-m)!^2} 
\;\leq\; I_0(k\delta)\, \frac{(k\delta/2)^{2N}}{(N!)^2},
\]
we find
\begin{align}
\label{E2_2}
\sum_{\substack{|m|\leq N/2 \\ |n-m|\geq N}} \Big|\tfrac{s_n}{s_m} J_m(k|x|)\Big|
&\leq 4 I_0(k\delta) \sum_{0\leq m \leq N/2} \frac{(k|x|/2)^m}{m!} \frac{(k\delta/2)^{2N}}{(N!)^2} \\
&\leq 4 I_0(k\delta)\, e^{k|x|/2} \frac{(k\delta/2)^{2N}}{(N!)^2}. \notag
\end{align}

Moreover, by \eqref{inv_V_1norm}, the weights satisfy
\[
\|w\|_\infty \;\leq\; \frac{1}{2\pi N}\left(\frac{\sqrt{2}\, e}{\alpha}\right)^{2N}.
\]
Combining this bound with the high-frequency estimate \eqref{sum_Jn_n} and the second-term estimate \eqref{E2_2}, we obtain
\begin{align}
\|E_2\|_2 
&\leq \|w^{1/2}\|_\infty \Bigg[
   4 I_0(k\delta)\, e^{k|x|/2} \frac{(k\delta/2)^{2N}}{(N!)^2} 
   + \frac{e^{k|x|/2}}{\sqrt{2\pi N}}
     \left(\frac{k|x| e}{2N}\right)^N
   \Bigg] \\
&\leq \frac{e^{k|x|/2}}{2\pi N}\left[
   \frac{4 I_0(k\delta)}{(\pi N)^{1/2}}
   \left(\frac{\sqrt{2}\, e\, k\delta}{\alpha N}\right)^{2N}
   + \left(\frac{\sqrt{2}\, k|x| e^2}{\alpha N}\right)^N
\right]. \notag
\end{align}
This completes the proof.
\end{proof}

\noindent
\textbf{Remark.}  
The error $E_2$ in \eqref{def_E2} decays super-algebraically, being of order $(1/N)^N$. Thus, the approximate solution $\tilde{g}_x^w$ from \eqref{eq:newg} is extremely accurate for moderately large $N$. 

We are now prepared to compute the $L^2$-norm of the approximate solution, which defines the index function:
\begin{equation}
\label{norm_I}
\tilde{I}_w^{-1}(x) := \|\tilde{g}_x^w\|_2^{-2} = \sum_{|n|<N/2}\frac{J_n(k|x|)^2}{s_n^2}\,.
\end{equation}
It follows directly from \eqref{norm_I} that $\tilde{I}_w(x)$ coincides with the index function constructed from full-aperture data sampled at $(N_y-1)/2+1$ uniformly distributed measurement points on $\bS$.

To assess the behavior of the index function over a region, we define the spatial average of $\tilde{I}_w^{-1}$ on a set $\Omega$ as
\begin{equation}
    \overline{\|\tilde{I}_w^{-1}\|}_{L^1(\Omega)} := \frac{1}{|\Omega|}\int_{\Omega} \big|\tilde{I}_w^{-1}(x)\big|\,dx\,.
\end{equation}
With this definition, we show that $\tilde{I}_w^{-1}(x)$ is very small inside $B(0,\delta)$ but becomes large outside this region.

\begin{Th}
\label{Thm_concen}
Let $\tilde{I}_w^{-1}$ be defined by \eqref{norm_I} for $D=B(0,\delta)$ with $k\delta<\pi/2$ and the assumption in Lemma \ref{approx_J0} holds. Denote $n_0:=\lfloor N/2\rfloor$ and assume $r>\delta$. Then there exists a constant $C>0$ that is independent of $N$ such that
\begin{equation}
\label{fraction_index}
    \frac{\overline{\|\tilde{I}_w^{-1}\|}_{L^1(B(0,r))}}{\overline{\|\tilde{I}_w^{-1}\|}_{L^1(B(0,\delta))}}
    \;\ge\; C\,\frac{(r/\delta)^{2n_0}}{N} \quad\longrightarrow\; \infty
    \quad\text{as }N\to\infty.
\end{equation}
In particular, for large $N$ the ratio grows at least exponentially in $N$ (since $(r/\delta)^{2n_0}\sim (r/\delta)^N$).
\end{Th}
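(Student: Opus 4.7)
The plan is to exploit the radial structure of $\tilde{I}_w^{-1}$ to reduce the $L^1$-averages on the two balls to explicit sums over Bessel quotients, and then single out the dominant mode at $|n|=n_0$. First I would integrate in polar coordinates and use the Lommel identity
\[
\int_0^\rho J_n(kr)^2\,r\,dr \;=\; \tfrac{\rho^2}{2}\bigl[J_n(k\rho)^2 - J_{n-1}(k\rho)J_{n+1}(k\rho)\bigr] \;=:\; \tfrac{\rho^2}{2}\,T_n(\rho),
\]
so that, recalling $s_n = \tilde{\gamma}\pi\delta^2 T_n(\delta)$ from \eqref{def_sn},
\[
\overline{\|\tilde{I}_w^{-1}\|}_{L^1(B(0,\rho))}
\;=\; \frac{1}{(\tilde{\gamma}\pi\delta^2)^2}\sum_{|n|\leq n_0}\frac{T_n(\rho)}{T_n(\delta)^2}.
\]
Taking $\rho=\delta$ and $\rho=r$, the constant prefactors cancel in the ratio \eqref{fraction_index}.

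The second step is to bound the two resulting sums in opposite directions. Under $k\delta<\pi/2$, the series expansion of $J_n$ and the Bessel ratio bounds already used in the proof of Lemma~\ref{Lemma_gw_approx} show that $T_n(\delta)$ is strictly decreasing in $|n|$ for $|n|\leq n_0$ (at leading order $T_n(\delta)\sim(k\delta/2)^{2n}/[(n!)^2(n+1)]$), so its minimum over this range is attained at $|n|=n_0$. This yields the crude upper bound $\sum_{|n|\leq n_0}1/T_n(\delta)\leq N/T_{n_0}(\delta)$. For the numerator I keep only the single term at $|n|=n_0$, obtaining $\sum_{|n|\leq n_0}T_n(r)/T_n(\delta)^2\geq T_{n_0}(r)/T_{n_0}(\delta)^2$. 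Combining,
\[
\frac{\overline{\|\tilde{I}_w^{-1}\|}_{L^1(B(0,r))}}{\overline{\|\tilde{I}_w^{-1}\|}_{L^1(B(0,\delta))}}
\;\geq\; \frac{T_{n_0}(r)}{N\,T_{n_0}(\delta)}.
\]

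The main obstacle is the third step: a clean lower bound $T_{n_0}(r)/T_{n_0}(\delta)\geq C(r/\delta)^{2n_0}$ with $C>0$ independent of $N$. The idea is that for $n$ large compared with $kr$ and $k\delta$, the leading term of
\[
J_n(t)\;=\;(t/2)^n\sum_{m\geq 0}\frac{(-1)^m (t/2)^{2m}}{m!\,(n+m)!}
\]
dominates, and together with the analogous expansions for $J_{n\pm 1}$ yields $T_n(t) = \frac{(t/2)^{2n}}{(n!)^2(n+1)}\bigl(1+O(t^2/n)\bigr)$ uniformly for $t$ in a bounded interval. For $n=n_0$ with $N$ large, the ratio therefore equals $(r/\delta)^{2n_0}(1+o(1))$ as $N\to\infty$, where the $o(1)$ depends on $N$ alone. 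The technical care needed is to control the subleading terms uniformly in $N$, which can be done using the same Bessel ratio bounds already invoked in \eqref{snsm}--\eqref{JnJm}. Absorbing these constants yields $T_{n_0}(r)/T_{n_0}(\delta)\geq C(r/\delta)^{2n_0}$, and inserting this into the lower bound above gives the claimed estimate \eqref{fraction_index}; since $r>\delta$, the right-hand side grows geometrically in $N$.
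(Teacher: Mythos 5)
Your proposal is correct and follows essentially the same route as the paper: polar-coordinate reduction of the two $L^1$-averages, the Lommel identity $\int_0^\rho J_n(kr)^2 r\,dr=\tfrac{\rho^2}{2}\bigl(J_n(k\rho)^2-J_{n-1}(k\rho)J_{n+1}(k\rho)\bigr)$, isolation of the single dominant mode $|n|=n_0$, control of the cross term via $|J_{n\pm1}(t)/J_n(t)|\le t/(2n+1)$, and the small-argument asymptotics $J_{n_0}(t)\sim (t/2)^{n_0}/n_0!$ to extract the $(r/\delta)^{2n_0}$ growth. The only substantive difference is that you track the $1/s_n^2$ normalization exactly, which is why you need the additional claim that $T_n(\delta)$ is minimized at $|n|=n_0$ (true for $k\delta<\pi/2$, but asserted rather than proved in your sketch and worth a line of justification), whereas the paper's computation cancels one factor of $s_n$ so that its denominator collapses to $2n_0+1$ outright; both versions arrive at the same final lower bound of order $\tfrac{1}{2n_0+1}(r/\delta)^{2n_0}$.
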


\begin{proof}
Recall the definition
\[
\tilde I_w^{-1}(x)=\sum_{|n|<N/2}\frac{J_n(k|x|)^2}{s_n^2},
\qquad s_n=\tilde\gamma\,2\pi\int_0^\delta J_n(kr)^2 r\,dr.
\]
Integrating in polar coordinates gives, for any $\rho>0$,
\begin{equation}
\int_{B(0,\rho)}\tilde I_w^{-1}(x)\,dx
= \sum_{|n|<N/2}\frac{1}{s_n^2}\,2\pi\int_0^\rho J_n(kr)^2 r\,dr
= \frac{1}{\tilde\gamma}\sum_{|n|<N/2}\frac{\displaystyle\int_0^\rho J_n(kr)^2 r\,dr}
{\displaystyle\int_0^\delta J_n(kr)^2 r\,dr}.
\end{equation}
Thus the ratio of spatial averages equals the ratio of the corresponding sums, since the common factor $1/\tilde\gamma$ cancels.

Fix the single index $n_0=\lfloor N/2\rfloor$. All summands are nonnegative, so
\begin{equation}
\frac{\int_{B(0,r)}\tilde I_w^{-1}}{\int_{B(0,\delta)}\tilde I_w^{-1}}
\;\ge\; \frac{\displaystyle\frac{\int_0^r J_{n_0}(kr)^2 r\,dr}{\int_0^\delta J_{n_0}(kr)^2 r\,dr}}
{\displaystyle\sum_{|n|<N/2}\frac{\int_0^r J_n(kr)^2 r\,dr}{\int_0^\delta J_n(kr)^2 r\,dr}}
\ge \frac{\int_0^r J_{n_0}(kr)^2 r\,dr}{(2n_0+1)\int_0^\delta J_{n_0}(kr)^2 r\,dr}.
\end{equation}
We now estimate the ratio of the radial integrals for index \(n_0\). Use the identity
\[
\int_0^\rho J_n(kr)^2 r\,dr
= \frac{\rho^2}{2}\Big(J_n(k\rho)^2 - J_{n-1}(k\rho)J_{n+1}(k\rho)\Big).
\]
Hence
\begin{equation}
\frac{\int_0^r J_{n_0}(kr)^2 r\,dr}{\int_0^\delta J_{n_0}(kr)^2 r\,dr}
= \frac{r^2\big(J_{n_0}(kr)^2 - J_{n_0-1}(kr)J_{n_0+1}(kr)\big)}
{\delta^2\big(J_{n_0}(k\delta)^2 - J_{n_0-1}(k\delta)J_{n_0+1}(k\delta)\big)}.
\end{equation}

To control the correction factors, use the standard ratio bound valid for all \(n\ge0\) and \(t\ge0\):
\[
\frac{|J_{n\pm1}(t)|}{|J_n(t)|} \le \frac{t}{2n+1}.
\]
Applying this with \(t=k r\) and \(t=k\delta\) yields
\begin{equation}
\left| \frac{J_{n_0-1}(k\rho)J_{n_0+1}(k\rho)}{J_{n_0}(k\rho)^2} \right|
\le \left(\frac{k\rho}{2n_0+1}\right)^2 \qquad(\rho=r,\delta).
\end{equation}
Since \(k\delta\le \pi/2\) and \(n_0\ge 1\) for large \(N\), the right-hand side is uniformly small and bounded by a constant \(<1/2\) for \(N\) large enough. Consequently there exist absolute constants \(c_-,c_+>0\) that are independent of \(n_0,N\) such that
\[
c_- J_{n_0}(k\rho)^2 \le J_{n_0}(k\rho)^2 - J_{n_0-1}(k\rho)J_{n_0+1}(k\rho)
\le c_+ J_{n_0}(k\rho)^2
\qquad(\rho=r,\delta).
\]
Therefore
\begin{equation}
\frac{\int_0^r J_{n_0}(kr)^2 r\,dr}{\int_0^\delta J_{n_0}(kr)^2 r\,dr}
\ge \frac{c_-}{c_+}\,\frac{r^2 J_{n_0}(kr)^2}{\delta^2 J_{n_0}(k\delta)^2}.
\end{equation}

Finally use the large-order estimate for fixed \(t\) and large \(n\),
\[
J_n(t)\sim \frac{(t/2)^n}{n!},
\]
which implies for \(n=n_0\) and fixed \(k,r,\delta\) 
\[
\frac{J_{n_0}(kr)}{J_{n_0}(k\delta)} \;\gtrsim\; \Big(\frac{r}{\delta}\Big)^{n_0}.
\]
Combining the previous displays yields
\begin{equation}
\frac{\int_{B(0,r)}\tilde I_w^{-1}}{\int_{B(0,\delta)}\tilde I_w^{-1}}
\;\gtrsim\; \frac{1}{2n_0+1}\left(\frac{r}{\delta}\right)^{2n_0+2}.
\end{equation}
Using \(2n_0+1\le N\) and \(2n_0+2\ge N-2\) gives the claimed bound \eqref{fraction_index}, and the right-hand side tends to infinity exponentially in \(N\) when \(r>\delta\). This completes the proof.
\end{proof}

From Theorem~\ref{Thm_concen}, since $r/\delta > 1$, the mean value of $\tilde{I}_w^{-1}$ is significantly larger in $B(0,r) \setminus B(0,\delta)$ than inside $B(0,\delta)$ as $N_y$ increases.  
Equivalently, the index function $\tilde{I}_w$ is large in $B(0,\delta)$ and small outside, enabling us to reconstruct the support of the inhomogeneous medium $D$.  

Finally, we compare the approximate solution $\tilde{g}_x^w$ in \eqref{eq:newg} with the exact solution $g_x^w$ of the far-field equation defined in \eqref{def_index}.  
To this end, we study the stability of the discrete operator $F_w$ and, in particular, conditions under which $\|F_w^{-1}\|_{2}$ remains bounded as the number of measurement points increases.  

Let  
\[
\langle u, v \rangle_{w,N_y} = \sum_{j=1}^{N_y} u_j \overline{v_j}\, w_j
\]
denote the weighted inner product on the measurement nodes $\Gamma_{N_y} = \{ y_j \}$, and define the orthogonal projector  
\[
P_{N_y}: \mathbb{C}^{N_y} \to V_{N_y} = \mathrm{span}\{(e^{in\theta_{y_j}})_{j=1}^{N_y} : |n|<N_y \}
\]
in $\ell^2_{w,N_y}$. By construction,
\begin{equation}
    F_w = P_{N_y} F P_{N_y}\,.
\end{equation}
Since $P_{N_y}$ is an orthogonal projection, the projection theorem yields
\begin{equation}
\lim_{N_y \to \infty} \|F - F_w\|_{2} = 0\,.
\end{equation}
Consequently, for $N_y$ sufficiently large,
\[
\|(F_w - F)F^{-1}\|_{2} < 1\,.
\]
Writing
\[
F_w = F(I - E), \qquad E = F^{-1}(F - F_w)\,,
\]
we obtain $(I - E)^{-1} = \sum_{k=0}^\infty E^k$ and therefore
\begin{equation}
F_w^{-1} = (I - E)^{-1}F^{-1}\,, 
\qquad
\|F_w^{-1}\|_{2} \leq \frac{\|F^{-1}\|_{2}}{1 - \|(F_w - F)F^{-1}\|_{2}}.
\end{equation}

This shows that $\|F_w^{-1}\|_{2}$ remains bounded for large $N_y$, provided that $F$ is invertible and $\|(F_w - F)F^{-1}\|_{2}$ is sufficiently small.  
We note, however, that the rate at which $F_w \to F$ depends on the smoothness of the far-field operator $F$, and thus ultimately on the smoothness of the inhomogeneous medium.  
In highly ill-conditioned regimes (e.g., with very limited aperture or strong measurement noise), the stability of $F_w$ may deteriorate, and no uniform bound can be guaranteed.

We next quantify the difference between the exact solution \(g_x^w\) of the weighted far-field equation and the approximate solution \(\tilde g_x^w\) constructed in \eqref{eq:newg}. 

Using \eqref{def_sn} we obtain the following lower bound for \(\|\tilde g_x^w\|_2\):
\begin{align}
    \|\tilde g_x^w\|_2^2 
    &= \sum_{|n|<N/2} \frac{J_n(k|x|)^2}{s_n^2}
    \;\ge\; \sum_{|n|<N/2} \frac{J_n(k|x|)^2 (n+1/2)^2}{J_n(k\delta)^4} \notag\\
    &\ge\; \sum_{|n|<N/2} \left(\frac{2|x|}{k\delta^2}\right)^{2n} \frac{n!^2 (n+1)^2}{4}\,,
\end{align}
where in the last step we used the standard large-order lower bound \(J_n(t)\gtrsim (t/2)^n/n!\) for fixed \(t\) and large \(n\).

Recalling the error estimate for \(E_2\) in Lemma~\ref{Lemma_gw_approx}, we have
\[
\|g_x^w - \tilde g_x^w\|_2 = \|F_w^{-1} E_2\|_2 \le \|F_w^{-1}\|_2 \, \|E_2\|_2\,,
\]
and therefore the relative error admits the bound
\begin{equation}
\label{eq:rel_error_bound_final}
\frac{\|g_x^w - \tilde g_x^w\|_2}{\|\tilde g_x^w\|_2}
\le \|F_w^{-1}\|_2\;
\frac{\|E_2\|_2}
{\bigg(\displaystyle\sum_{|n|<N/2}\left(\frac{2|x|}{k\delta^2}\right)^{2n}\frac{n!^2 (n+1)^2}{4}\bigg)^{1/2}}\,.
\end{equation}

Because the denominator grows factorially in \(n\) (and hence in \(N\)) while the numerator grows only exponentially in \(N\), the right-hand side of \eqref{eq:rel_error_bound_final} decays exponentially in \(N\). In particular, for every fixed compact set \(K\subset\mathbb R^2\), e.g., \(K=B(0,r)\) or \(K=B(0,\delta)\), we obtain
\[
\varepsilon_K(N):=\sup_{x\in K}\frac{\|g_x^w-\tilde g_x^w\|_2}{\|\tilde g_x^w\|_2}\longrightarrow 0
\qquad (N\to\infty)\,,
\]
provided \(\|F_w^{-1}\|_2\) remains uniformly bounded.

\begin{Cor}
\label{Cor_concen_true}
Under the hypotheses of Theorem~\ref{Thm_concen} and assume $\|F_w^{-1}\|_2$ is uniformly bounded, the true index \(I_w^{-1}\) satisfies the same concentration property as \(\tilde I_w^{-1}\). Concretely, for any \(r>\delta\),
\[
\frac{\overline{\|I_w^{-1}\|}_{L^1(B(0,r))}}{\overline{\|I_w^{-1}\|}_{L^1(B(0,\delta))}}
\longrightarrow \infty \qquad (N\to\infty).
\]
\end{Cor}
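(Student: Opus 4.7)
The plan is to transfer the concentration of the approximate index $\tilde I_w^{-1}$ (already established by Theorem~\ref{Thm_concen}) to the exact index $I_w^{-1}$ via the pointwise-in-$x$ closeness between $\|g_x^w\|_2$ and $\|\tilde g_x^w\|_2$ supplied by the relative error bound \eqref{eq:rel_error_bound_final}. Fix $r>\delta$ and set $K:=\overline{B(0,r)}$, which contains both $B(0,\delta)$ and $B(0,r)$. By the reverse triangle inequality and the definition of $\varepsilon_K(N)$ displayed just before the corollary,
\[
\bigl|\|g_x^w\|_2-\|\tilde g_x^w\|_2\bigr|\le \|g_x^w-\tilde g_x^w\|_2\le \varepsilon_K(N)\,\|\tilde g_x^w\|_2\qquad(x\in K),
\]
so squaring produces the pointwise sandwich
\[
(1-\varepsilon_K(N))^2\,\tilde I_w^{-1}(x)\;\le\;I_w^{-1}(x)\;\le\;(1+\varepsilon_K(N))^2\,\tilde I_w^{-1}(x)\qquad(x\in K).
\]

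Next I would integrate this sandwich, using the lower bound on $B(0,r)$ in the numerator and the upper bound on $B(0,\delta)$ in the denominator, and divide to obtain
\[
\frac{\overline{\|I_w^{-1}\|}_{L^1(B(0,r))}}{\overline{\|I_w^{-1}\|}_{L^1(B(0,\delta))}}
\;\ge\;\biggl(\frac{1-\varepsilon_K(N)}{1+\varepsilon_K(N)}\biggr)^{\!2}\,
\frac{\overline{\|\tilde I_w^{-1}\|}_{L^1(B(0,r))}}{\overline{\|\tilde I_w^{-1}\|}_{L^1(B(0,\delta))}}.
\]
The volume factors $|B(0,r)|$ and $|B(0,\delta)|$ that appear when passing from integrals to averages are fixed constants and cancel uniformly in $N$. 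Since $\varepsilon_K(N)\to 0$ as $N\to\infty$, the prefactor tends to $1$, while the right-hand ratio tends to infinity by Theorem~\ref{Thm_concen}; their product therefore diverges, which is the corollary.

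The only substantive ingredient is the uniform convergence $\varepsilon_K(N)\to 0$, and this is exactly what is furnished in the paragraph preceding the statement: the super-algebraic decay of $\|E_2\|_2$ from Lemma~\ref{Lemma_gw_approx} outpaces the factorial growth of the lower bound on $\|\tilde g_x^w\|_2$ used in \eqref{eq:rel_error_bound_final}, and the assumption that $\|F_w^{-1}\|_2$ remains uniformly bounded controls the multiplicative constant in $\|g_x^w-\tilde g_x^w\|_2\le\|F_w^{-1}\|_2\|E_2\|_2$. Consequently, the main obstacle is conceptual rather than computational: one needs $\varepsilon_K(N)$ to vanish uniformly on a compact set that contains the outer ball $B(0,r)$, and this is the sole place where the boundedness hypothesis on $\|F_w^{-1}\|_2$ enters. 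Once this uniformity is accepted, the corollary is immediate from the pointwise sandwich; an analogous one-line argument using the reciprocal prefactor yields the matching upper bound, confirming that the divergence is genuinely two-sided.
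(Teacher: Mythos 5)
Your proposal is correct and follows essentially the same route as the paper: both use the relative error bound \eqref{eq:rel_error_bound_final} to sandwich $I_w^{-1}(x)$ between $(1\mp\varepsilon)^{\pm 2}$ multiples of $\tilde I_w^{-1}(x)$, integrate over the two balls, and let the prefactor tend to $1$ while Theorem~\ref{Thm_concen} drives the ratio to infinity. The only cosmetic differences are that you take a single supremum over $\overline{B(0,r)}$ where the paper uses separate suprema over $B(0,r)$ and $B(0,\delta)$, and your sandwich constants $(1\pm\varepsilon)^{2}$ are the directly squared (and in fact more carefully stated) versions of the paper's $(1\mp\varepsilon)^{-2}$.
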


\begin{proof}
Set
\[
\varepsilon(x;N):=\frac{\|g_x^w-\tilde g_x^w\|_2}{\|\tilde g_x^w\|_2},\qquad
\varepsilon_{B(0,r)}(N)=\sup_{x\in B(0,r)}\varepsilon(x;N),\qquad
\varepsilon_{B(0,\delta)}(N)=\sup_{x\in B(0,\delta)}\varepsilon(x;N)\,.
\]
From the estimate \eqref{eq:rel_error_bound_final} we have \(\varepsilon_{B(0,r)}(N)\), \(\varepsilon_{B(0,\delta)}(N)\to0\) as \(N\to\infty\).

The triangle inequality yields
\[
\big|\|g_x^w\|_2-\|\tilde g_x^w\|_2\big|\le \|g_x^w-\tilde g_x^w\|_2
\le \varepsilon(x;N)\,\|\tilde g_x^w\|_2\,,
\]
hence
\[
(1-\varepsilon(x;N))\|\tilde g_x^w\|_2 \le \|g_x^w\|_2 \le (1+\varepsilon(x;N))\|\tilde g_x^w\|_2\,.
\]
Using the definition of index functions gives the pointwise inequalities
\[
(1+\varepsilon(x;N))^{-2}\,\tilde I_w^{-1}(x)\le I_w^{-1}(x)\le(1-\varepsilon(x;N))^{-2}\,\tilde I_w^{-1}(x).
\]

Integrating these inequalities over a measurable set \(A\) and dividing by \(|A|\) yields the corresponding bounds for the spatial averages. Applying this to \(A=B(0,r)\) and \(A=B(0,\delta)\) and taking the lower bound for the numerator together with the upper bound for the denominator, we obtain
\[
\frac{\overline{\|I_w^{-1}\|}_{L^1(B(0,r))}}{\overline{\|I_w^{-1}\|}_{L^1(B(0,\delta))}}
\ge
\frac{(1+\varepsilon_{B(0,r)}(N))^{-2}}{(1-\varepsilon_{B(0,\delta)}(N))^{-2}}
\cdot
\frac{\overline{\|\tilde I_w^{-1}\|}_{L^1(B(0,r))}}{\overline{\|\tilde I_w^{-1}\|}_{L^1(B(0,\delta))}}\,.
\]
The first term on the right tends to \(1\) as \(N\to\infty\) because \(\varepsilon_{B(0,r)}(N)\), \(\varepsilon_{B(0,\delta)}(N)\to0\). Combining this with Theorem~\ref{Thm_concen}  yields the asserted divergence for the true index \(I_w^{-1}\).
\end{proof}

\subsubsection{Extension beyond single disk}
\label{sec_gen_analysis}
We shall briefly sketch the extension of the results for the index function, originally developed for reconstructing $B(0,\delta)$, to a more general inhomogeneous medium in $\mathbb{R}^2$. 

\begin{enumerate}
\item \textbf{Translation invariance for a disk.}
Let $B(z,\delta)$ be an arbitrary disk and write $F_0$ and $F_z$ for the discrete far-field operators associated with $B(0,\delta)$ and $B(z,\delta)$, respectively.  Under the discrete Born approximation \eqref{def_U} (and absorbing any constant phase factors into the definition of the testing vectors), the matrix entries for $F_z$ and $F_0$ are related by the pointwise phase shift
\[
\Psi_z(y_j) = e^{-ik z\cdot y_j},\qquad y_j\in\Gamma_{N_y},
\]
so that, in matrix form,
\[
F_z^* F_z \;=\; \operatorname{diag}(\overline{\Psi_z})\, F_0^* F_0 \,\operatorname{diag}(\Psi_z)\,,
\]
where $\operatorname{diag}(\Psi_z)$ denotes the diagonal matrix with entries $\Psi_z(y_j)$ which is unitary and invertible.

Let $I_w(x;B(z,\delta))$ denote the index function computed from $F_z$. Using $\Psi_{x+z}=\operatorname{diag}(\Psi_z)\Psi_x$ we obtain
\begin{align}
I_w^{-1}(x;B(z,\delta))
&= \Psi_x^*(F_z^*F_z)^{-1}\Psi_x \notag\\
&= \Psi_{x+z}^*\,\operatorname{diag}(\Psi_z)\,
\bigl(\operatorname{diag}(\overline{\Psi_z})F_0^*F_0\operatorname{diag}(\Psi_z)\bigr)^{-1}
\operatorname{diag}(\overline{\Psi_z})\,\Psi_{x+z} \notag\\
&= \Psi_{x+z}^*(F_0^*F_0)^{-1}\Psi_{x+z}
\;=\; I_w^{-1}(x+z;B(0,\delta)). \label{index_translate}
\end{align}
In words: the index for a translated disk $B(z,\delta)$ is exactly the translation (by $-z$) of the index for the centered disk $B(0,\delta)$. Hence the concentration result of Theorem~\ref{Thm_concen} for $B(0,\delta)$ carries over immediately to any translated disk $B(z,\delta)$.

\item \textbf{Extensions to unions of disks and general inclusions.}
The proofs of Lemma~\ref{Lemma_gw_approx} and Theorem~\ref{Thm_concen} are based on the Fourier expansion of the kernel
\[
\int_D e^{ik(y_i-y_j)\cdot z}\,dz
\]
on the measurement arc $\Gamma$, expressed in the Fourier modes $e^{in(\theta_{y_i}-\theta_{y_j})}$. For a general domain $D$ the same strategy applies whenever one can compute or accurately estimate the corresponding Fourier coefficients
\[
s_n(D) \;=\; \int_D e^{ik(y-y')\cdot z}\,dz  \,.
\]
If these coefficients are available (analytically or numerically), one can build the approximate solution \eqref{eq:newg} and repeat the concentration argument to obtain an analogue of Theorem~\ref{Thm_concen} for that domain.

We sketch the extension to unions of well-separated inclusions. While the analysis is not fully rigorous, the main localization mechanism carries over. Assume $D$ consists of $M$ well-separated disks $B(z_m,\delta_m)$, $m=1,\dots,M$. Then the far-field operator splits into a superposition of contributions from each disk, and its Fourier coefficients take the convolutional form
\begin{equation}
\label{multi_disk_coeffs}
\tilde s_n \;=\; \sum_{m=1}^M \sum_{t\in\mathbb Z} s_t^{(m)}\, g_{\,n-t}^{(m)},
 \quad\text{where}\quad
\begin{cases}
    s_t^{(m)} \;:=\; \int_{B(0,\delta_m)} J_t(k|z|)^2 \,dz\\
    g_n^{(m)} \;:=\; \int_{\mathbb S^{1}} e^{in\theta}\, e^{ik y\cdot z_m}\,d\sigma_y
\end{cases}
\end{equation}
Consequently, the discrete far-field matrix admits the Fourier-series representation
\[
F(y_i,y_j) \;=\; \sum_{n\in\mathbb Z} e^{in(\theta_{y_i}-\theta_{y_j})}\,\tilde s_n,
\]
with $\tilde s_n$ as in \eqref{multi_disk_coeffs}. In this situation, one can construct the approximate solution \(\tilde g_x^w\) by truncating the Fourier expansion and choosing the weights as before; the same dominant-mode / diagonal-dominance arguments then yield a concentration result for the reconstructed index, with each disk producing a localized peak where the interaction terms can be controlled when the disks are sufficiently separated.

\end{enumerate}

To avoid unnecessary technicalities, we defer a full theoretical treatment for arbitrary domains to future work; instead, in Section~\ref{sec_numerics} we verify the method numerically on a variety of examples, including point scatterers, rectangular bars, and a non-convex kite-shaped inclusion, and observe that the weighted index retains the same localization properties in practice.


 \section{Computation of the weight function and stability of the reconstruction}
\label{sec_construction_w}

In this section, we compute the numerical values of the weight function $w$ and examine the robustness of the reconstruction in the presence of random noise. A direct evaluation of $w$ via the Vandermonde system in \eqref{Vandermonde_matrix} is generally unstable and requires solving a large linear system of size $N_z \times N_y$. To address this issue, we develop a more stable and efficient method for computing $w$, which is independent of the discretization of $\Omega$ and hence does not depend on $N_z$.

\subsection{Computation of the weight function}
To simplify the presentation, we consider the sampling domain $\Omega = B(0,1)$ and a set of discrete sampling points 
\[
\Omega_{N_z} := \{z_n\}_{n=1}^{N_z} \subset \Omega.
\]
Recalling the definition of $K_w$ in \eqref{def_Kw}, we note the translation invariance property 
\[
K_w(x,z) = K_w(0, x-z)\,.
\]
Hence, in order to construct a suitable weight function $w$ satisfying \eqref{approx_Kw}, it suffices to enforce the condition
\begin{equation}
\label{Uw_v}
   K_w(0,z) = \sum_{j=1}^{N_y} e^{ik y_j \cdot z}\, w(y_j) 
   \;\approx\; 
   \begin{cases}
       J_0(k|z|), & \text{in } \mathbb{R}^2, \\[0.3em]
       j_0(k|z|), & \text{in } \mathbb{R}^3,
   \end{cases}
\end{equation}
for $z \in \Omega_{N_z}$. Equation \eqref{Uw_v} can be written in the compact matrix form
\[
Uw = v\,,
\]
where $U \in \mathbb{C}^{N_z \times N_y}$ has $j$-th column given by $\big(e^{ik z_n \cdot y_j}\big)_{n=1}^{N_z}$, the vector $w \in \mathbb{C}^{N_y}$ contains the unknown weights, and the right-hand side is defined by $v(n) = J_0(k|z_n|)$ in $\mathbb{R}^2$ or $v(n) = j_0(k|z_n|)$ in $\mathbb{R}^3$, for $n=1,\dots,N_z$. We emphasize that the notation $U$ is similar to that in \eqref{def_U}, as both matrices are constructed with exponential terms, but here the sampling set $\{z_n\}$ is from the domain of interest $\Omega$.

We first describe the computation of $w$ in the two-dimensional case $\mathbb{R}^2$. A direct approach to solving $Uw=v$ is to compute an approximate inverse of $U$ using truncated singular value decomposition (SVD). Specifically, let $U^{\dagger}$ denote the truncated SVD pseudoinverse of $U$, then
\[
w = U^{\dagger} v\,.
\]
As an illustrative example, consider the aperture $\alpha = \pi/4$, wavenumber $k=6$, and target function $v(z) = J_0(k|z|)$. Solving $Uw=v$ via truncated SVD using the first $16$ singular values yields the weight function $w(y)$, which is plotted in Fig.\,\ref{fig_w}.

\begin{figure}[h!] 
  \captionsetup{width=.9\linewidth}
  \centering
  \includegraphics[trim={1.5cm 8.3cm 1.5cm 9cm},clip,scale = 0.5]{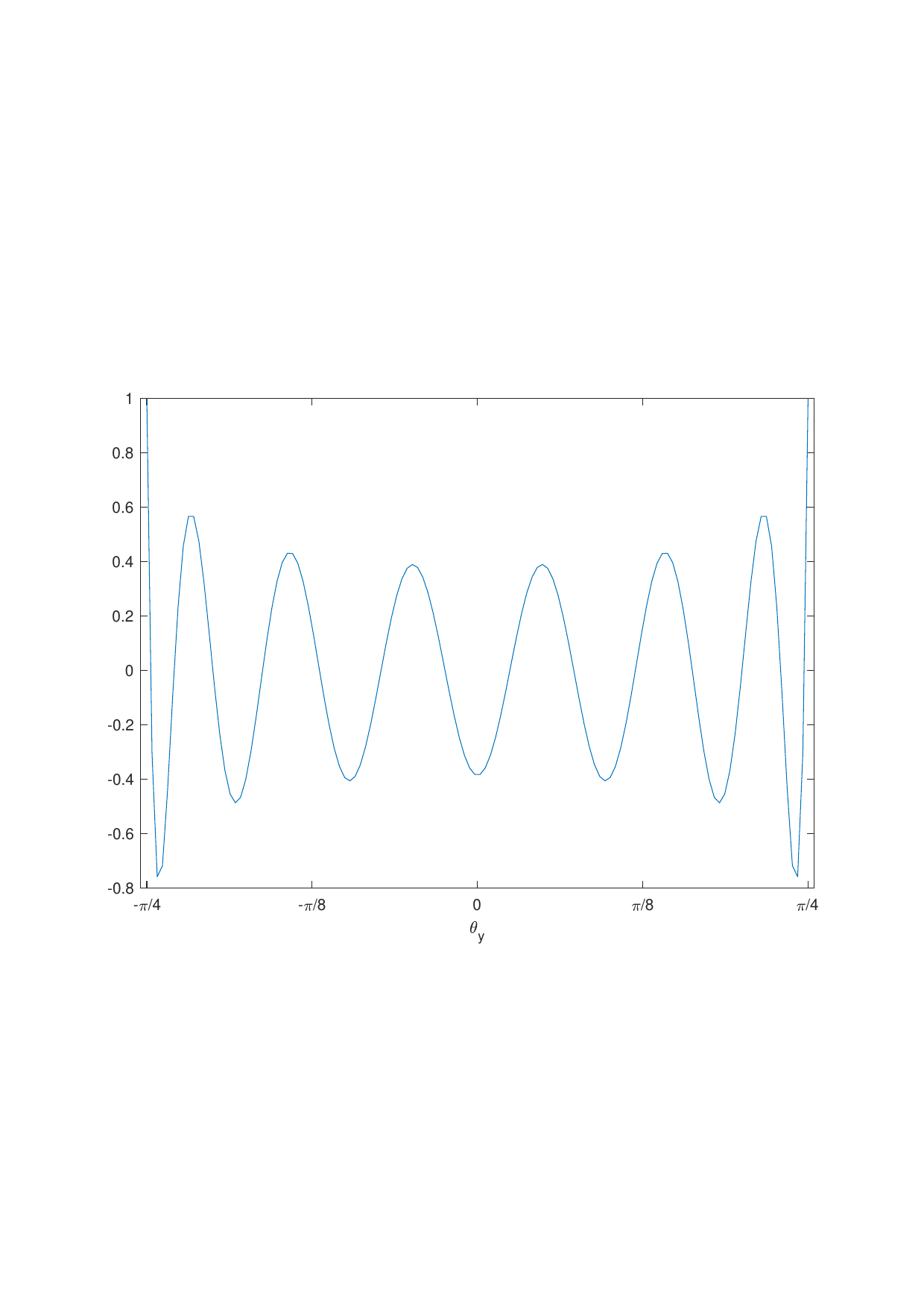}
  \caption{Computed values of $w(y)$ obtained by solving \eqref{Uw_v} via truncated SVD using the first $16$ singular values. Here $y = [\cos(\theta), \sin(\theta)]$ with $\theta \in [-\pi/4, \pi/4]$, $k=6$, and $v(z) = J_0(k|z|)$.}
\label{fig_w}
\end{figure}
From Fig.\,\ref{fig_w}, it is apparent that the weight function $w$ can be well approximated by a linear combination of Fourier modes of the form $\cos(n \theta_y)$, with $n = 0,\dots, M$. Choosing $M < N_y$ ensures that $w$ is sufficiently smooth, which in turn improves the stability of computing the index function. Moreover, this approximation substantially reduces the dimension of the system to be solved for $w$. It is worth emphasizing that solving \eqref{Uw_v} directly in its full form is generally computationally inefficient: the system matrix $U$ has size $N_z \times N_y$, and in practice $N_z \gg N_y$. The Fourier-mode reduction therefore provides both stability and computational efficiency.

Our first structural assumption on the weight function $w$ is the symmetry condition 
\[
w(y) = w(-y)\,,
\] 
which follows naturally from the symmetry of the $\Gamma$ in \eqref{def_Gamma} when $y = (\cos\theta, \sin\theta)$ with $\theta \in [-\alpha,\alpha]$. Motivated by the numerical observations in Fig.\,\ref{fig_w}, we represent $w$ in terms of a truncated Fourier–cosine expansion of the form
\[
w(y) = \sum_{m=0}^M \beta_m v_m(y)\,,
\]
where the basis functions are defined by
\begin{equation}
\label{def_vm}
    v_m(y(\theta)) := \frac{1}{\sqrt{\alpha}} \cos\!\left(\frac{\pi m}{\alpha}\theta\right)\,.
\end{equation}
Here $\{\beta_m\}_{m=0}^M$ are the unknown coefficients to be determined. 

To compute $w$, it suffices to solve the system
\[
UP_M \beta = v\,,
\]
where $P_M \in \mathbb{C}^{N_y \times (M+1)}$ is the basis matrix with entries $P_M(j,m) = v_m(y(\theta_j))$, $\beta \in \mathbb{C}^{M+1}$ is the coefficient vector with $\beta(m) = \beta_m$, and $v$ is defined as in \eqref{Uw_v}. 

Since the system is typically underdetermined when $M < N_z$, we compute $w$ by solving the associated normal equation
\begin{equation}
\label{normal_equation}
(U P_M)^* (U P_M) \beta = (U P_M)^* v\,.
\end{equation}
The matrix on the left-hand side of \eqref{normal_equation} is Hermitian and positive definite of size $(M+1)\times(M+1)$. Based on numerical experiments, choosing $M \approx (N_y-1)/2$ provides sufficient accuracy in the approximation \eqref{approx_Kw}, while ensuring stability.  
This approach reformulates the ill-conditioned Vandermonde system \eqref{Vandermonde_matrix} as a smaller Hermitian system, thereby reducing the cost of the linear solver for dense matrix from $\mathcal{O}(N_y^3)$ to $\mathcal{O}\!\left((N_y/2)^3\right)$, i.e., by roughly a factor of $8$.

To make \eqref{normal_equation} explicit, we rewrite the matrix–vector products in integral form. First, the action of $U P_M$ on $\beta$ can be expressed as
\begin{equation}
   (U P_M \beta)(z) = \frac{1}{\sqrt{\alpha}} \sum_{m=0}^M \beta_m 
   \int_{-\alpha}^{\alpha} e^{ik z \cdot y} \cos\!\left(\frac{\pi m}{\alpha} y\right) \, d\sigma_y\,.
\end{equation}
Next, for any $f \in L^2(\Omega)$, the adjoint operator satisfies
\begin{equation}
  \big((U P_M)^* f\big)(m) 
  = \frac{1}{\sqrt{\alpha}} \int_{\Omega} \int_{-\alpha}^{\alpha} 
  e^{-ik z \cdot y} \cos\!\left(\frac{\pi m}{\alpha} y\right) d\sigma_y \, f(z)\, dz,
  \quad m=0,\dots,M\,.
\end{equation}

Combining these expressions, the matrix entries of the left-hand side of \eqref{normal_equation} are given by
\begin{align}
\label{LHS_w}
    (UP_M)^*(UP_M)(m,n) 
     &= \frac{1}{\alpha}\int_{-\alpha}^{\alpha} \left[ \int_{-\alpha}^{\alpha} \int_{\Omega} 
     e^{ik z \cdot (y-y')} \, dz \, 
     \cos\!\left(\tfrac{\pi n}{\alpha} y\right) d\sigma_y \right] 
     \cos\!\left(\tfrac{\pi m}{\alpha} y'\right) d\sigma_{y'} \\
     &= \frac{2\pi}{\alpha}\int_{-\alpha}^{\alpha} \left[ \int_{-\alpha}^{\alpha} 
     \frac{J_1(k|y-y'|)}{k|y-y'|} 
     \cos\!\left(\tfrac{\pi n}{\alpha} y\right) d\sigma_y \right] 
     \cos\!\left(\tfrac{\pi m}{\alpha} y'\right) d\sigma_{y'}\,, \notag
\end{align}
where the simplification uses the identity
\[
\int_{\Omega} e^{ik z \cdot (y-y')} \, dz 
= 2\pi \int_0^1 J_0(k|z|\,|y-y'|)\,|z|\,dz 
= \frac{2\pi}{k|y-y'|} J_1\!\big(k|y-y'|\big)\,.
\]

For the right-hand side of \eqref{normal_equation}, if $v(z) = J_0(k|z|)$ we obtain
\begin{align}
\label{RHS_w}
     \big((UP_M)^*v\big)(m) 
     &= \frac{1}{\sqrt{\alpha}} \int_{\Omega} \int_{-\alpha}^{\alpha} 
     e^{-ik y \cdot z} \cos\!\left(\tfrac{\pi m}{\alpha} y\right) d\sigma_y \, J_0(k|z|) \, dz \\
     &= \frac{2 \pi}{\sqrt{\alpha}} \int_0^1 J_0(k|z|)^2 \, |z|\, d|z| \; \delta_0(m)\,. \notag
\end{align}

Thus, combining \eqref{LHS_w} and \eqref{RHS_w}, the computation of $w$ reduces to solving a linear system of size $(M+1)\times(M+1)$, where each entry is explicitly represented in terms of integrals involving Bessel functions. This formulation ensures both stability and computational efficiency.

We now carry out numerical experiments to compare 
$\langle \Psi_x, w \Psi_z \rangle_{L^2(\Gamma)}$, 
$\langle \Psi_x, \Psi_z \rangle_{L^2(\Gamma)}$, 
and $\langle \Psi_x, \Psi_z \rangle_{L^2(\bS)}$, 
for $x \in [-0.5,0.5]^2$, $z = (0.2,0)$, with $N_y = 12$ and $64$. The results are presented in Fig.\,\ref{fig_sim}.
\begin{figure}[H]
    \centering
    \begin{subfigure}[b]{0.3\textwidth}
        \centering
        \includegraphics[trim={3cm 9cm 3.2cm 9cm},clip,scale=0.38]{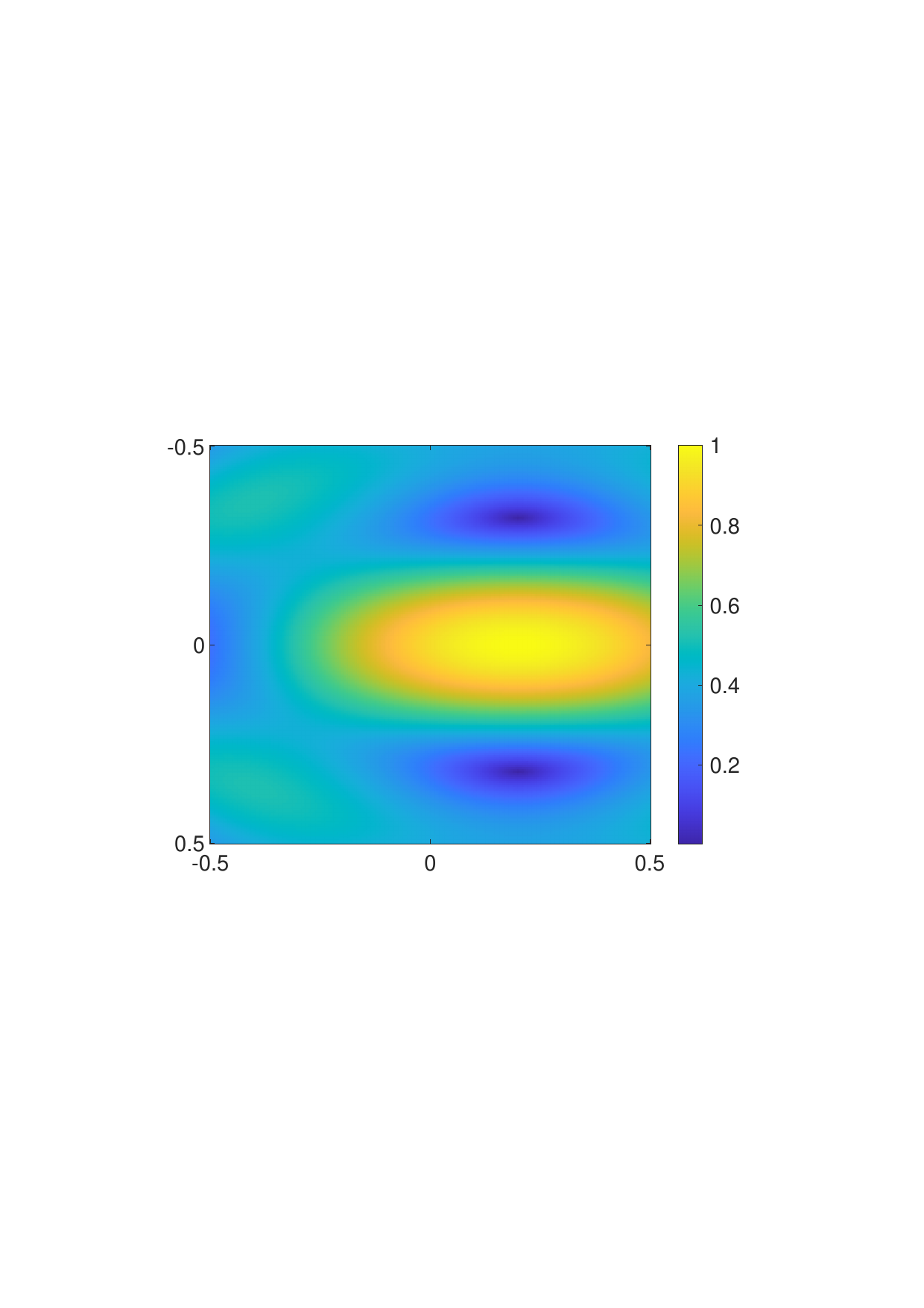}
    \end{subfigure}
    \begin{subfigure}[b]{0.3\textwidth}
        \centering
        \includegraphics[trim={3cm 9cm 3.2cm 9cm},clip,scale=0.38]{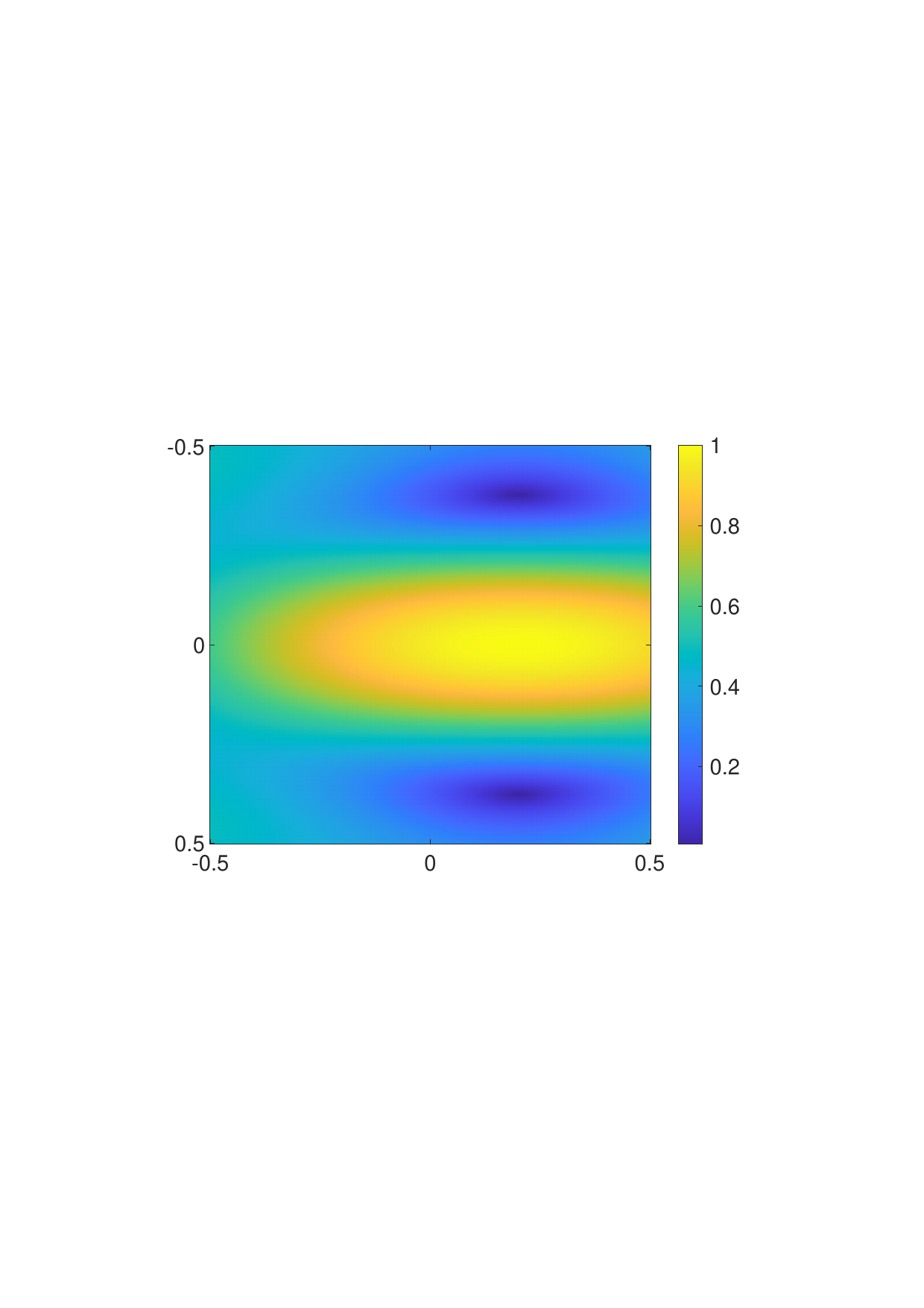}
    \end{subfigure}
    \begin{subfigure}[b]{0.3\textwidth}
        \centering
        \includegraphics[trim={3cm 9cm 3.2cm 9cm},clip,scale=0.38]{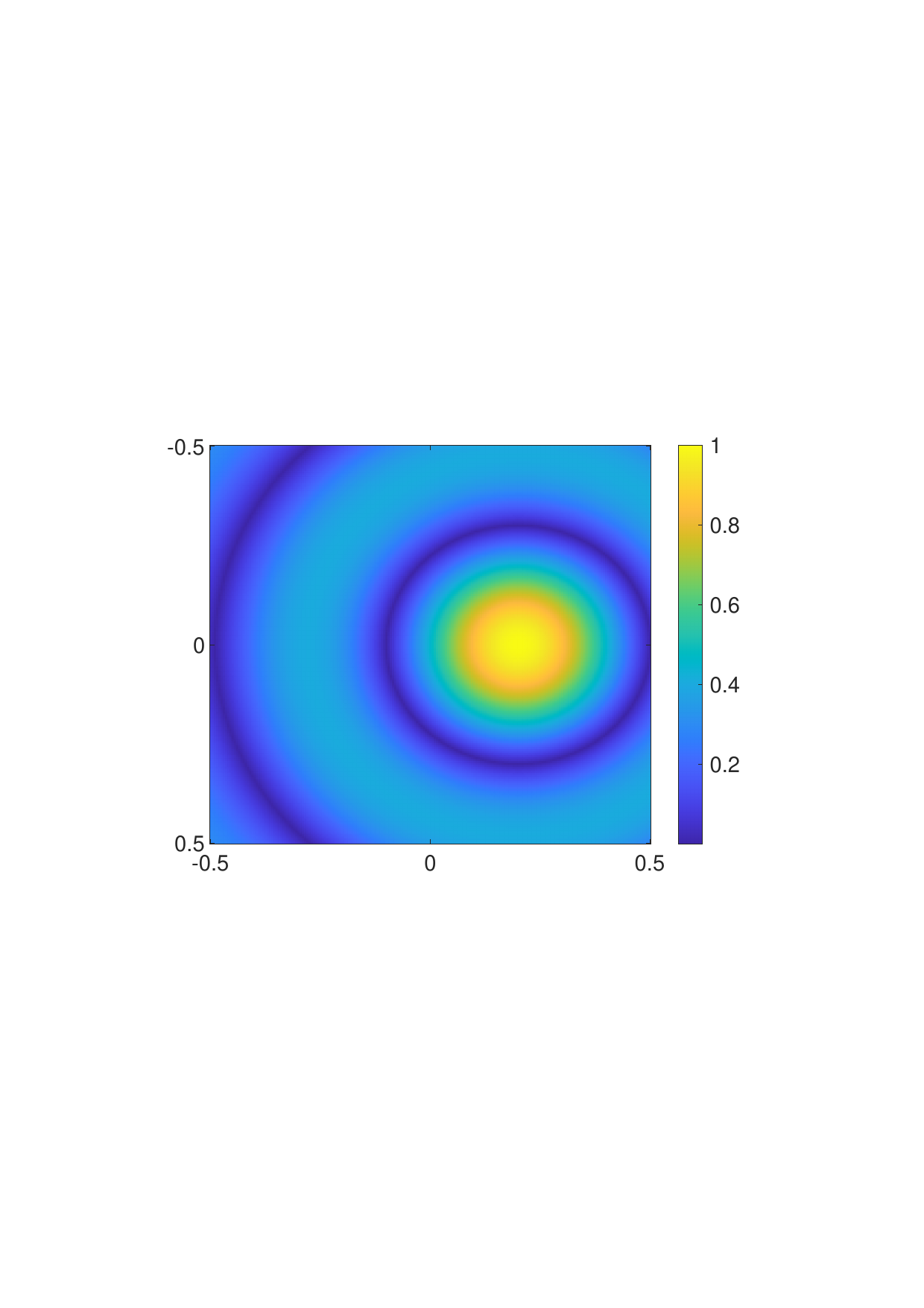}
    \end{subfigure}\\
    \centering
    \begin{subfigure}[b]{0.3\textwidth}
        \centering
        \includegraphics[trim={3cm 9cm 3.2cm 9cm},clip,scale=0.38]{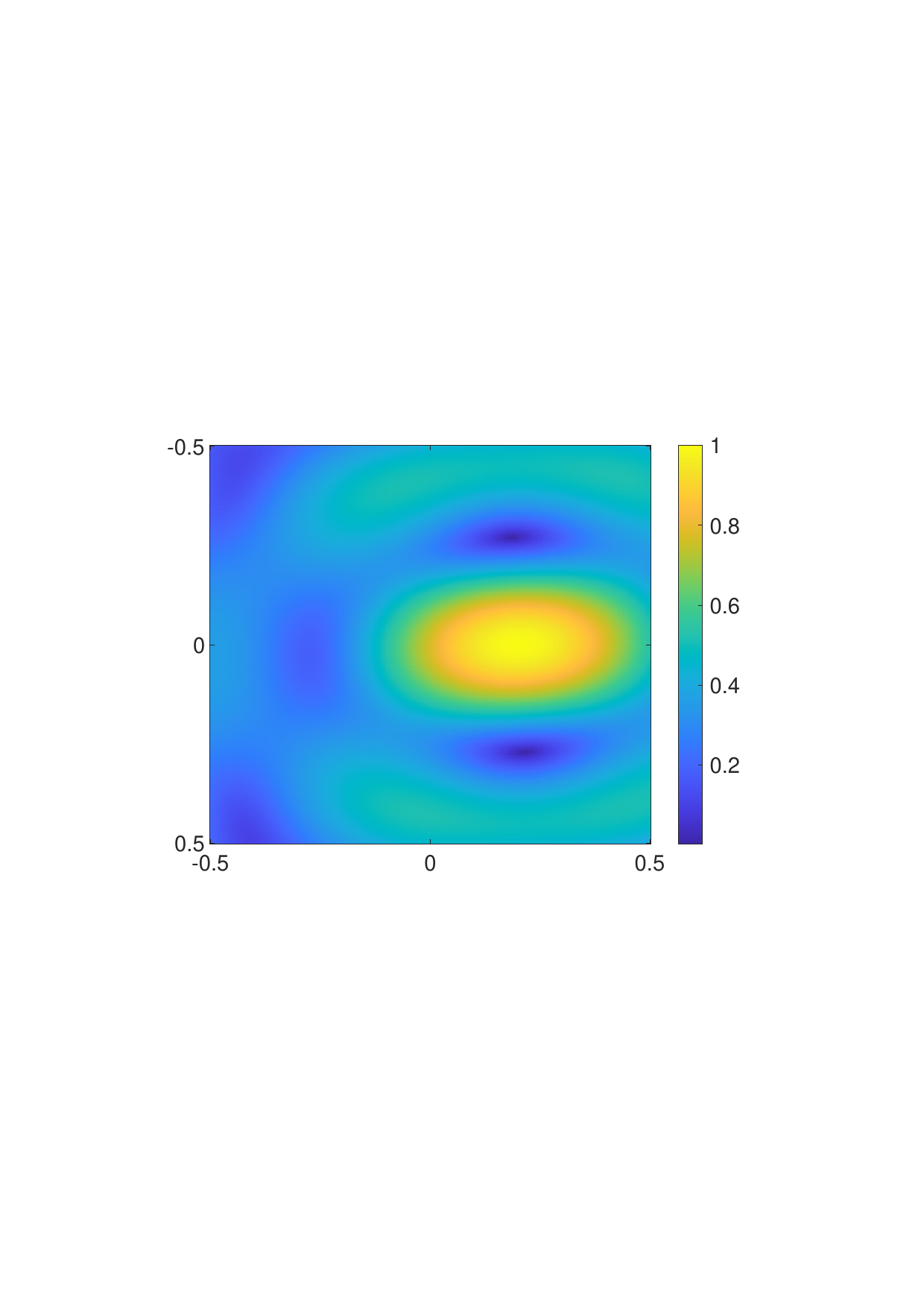}
    \end{subfigure}
    \begin{subfigure}[b]{0.3\textwidth}
        \centering
        \includegraphics[trim={3cm 9cm 3.2cm 9cm},clip,scale=0.38]{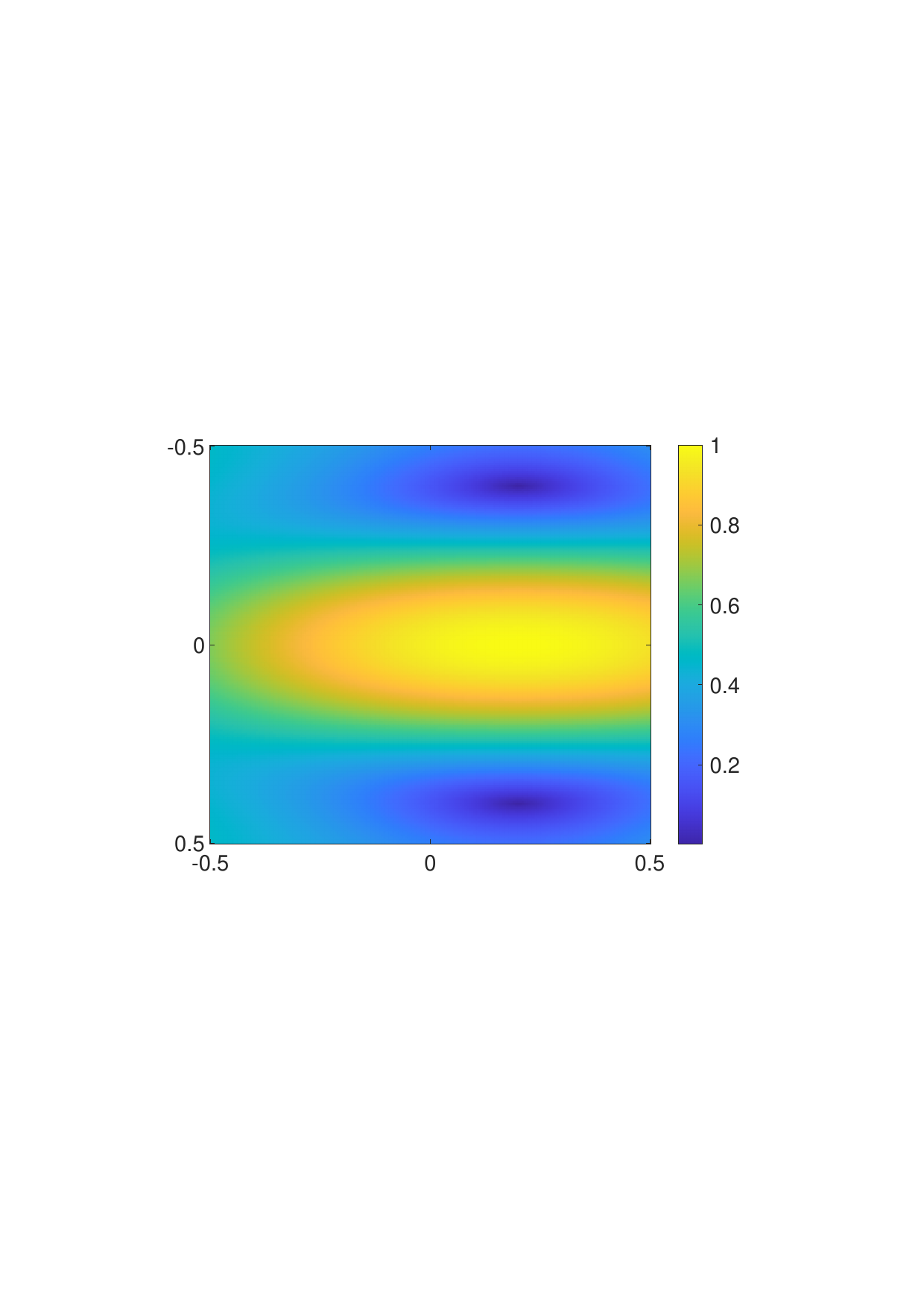}
    \end{subfigure}
    \begin{subfigure}[b]{0.3\textwidth}
        \centering
        \includegraphics[trim={3cm 9cm 3.2cm 9cm},clip,scale=0.38]{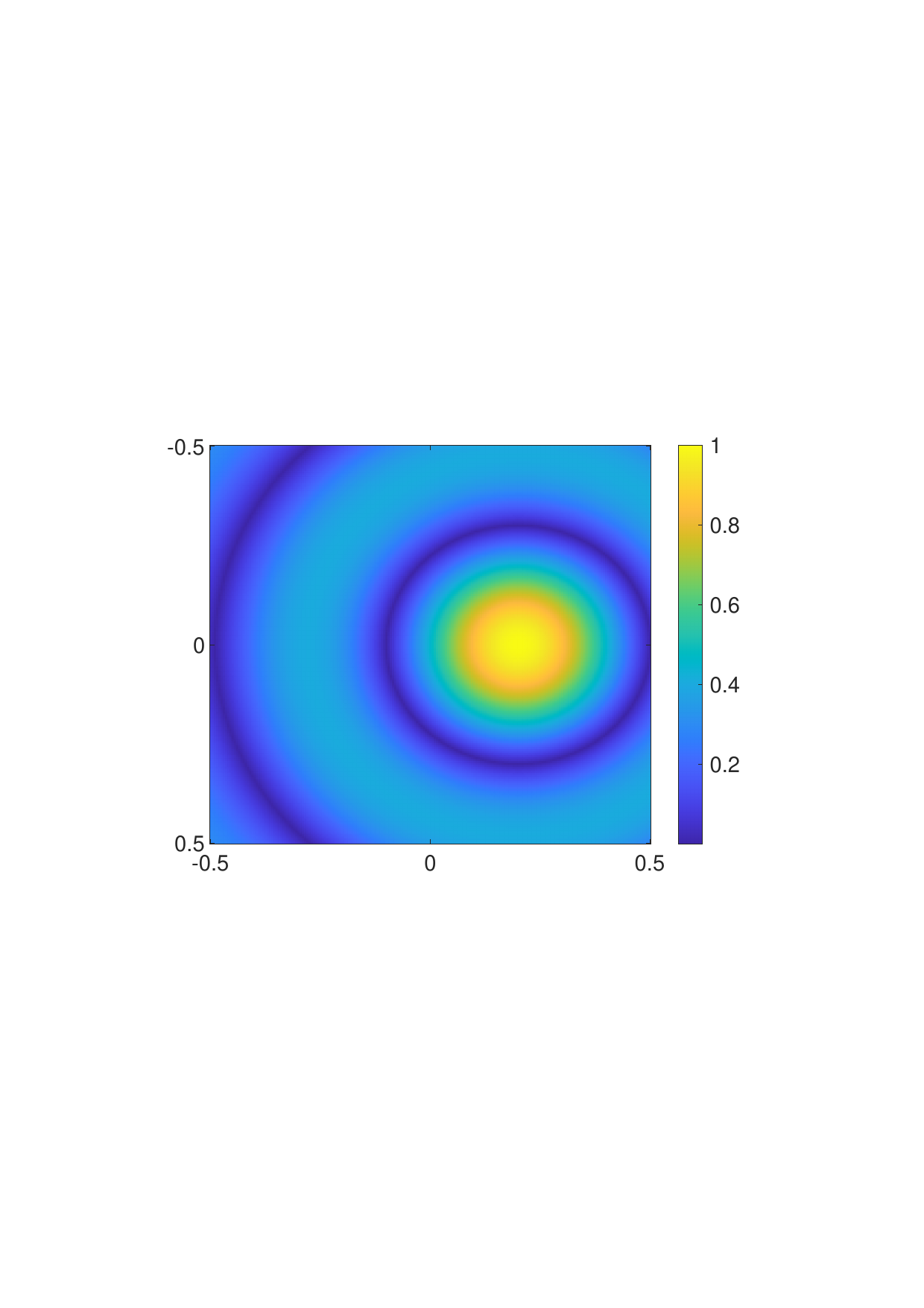}
    \end{subfigure}
    \caption{Comparison of the weighted inner product $\langle \Psi_x, w \Psi_z \rangle_{L^2(\Gamma)}$ (left), the standard boundary inner product $\langle \Psi_x, \Psi_z \rangle_{L^2(\Gamma)}$ (middle), and the full-sphere inner product $\langle \Psi_x, \Psi_z \rangle_{L^2(\bS)}$ (right), for $x \in [-0.5,0.5]^2$ with $k = 8$, $\alpha = \tfrac{\pi}{3}$, and $z = (0.2,0)$. The first row corresponds to $N_y = 12$ measurement points, while the second row corresponds to $N_y = 64$.}
\label{fig_sim}
\end{figure}

From Fig.\,\ref{fig_sim}, it is evident that the weighted inner product $\langle \Psi_x, w \Psi_z \rangle_{L^2(\Gamma)}$ provides a reasonable approximation to the desired full-sphere quantity $\langle \Psi_x, \Psi_z \rangle_{L^2(\bS)}$. Comparing the first column with the two right columns shows that the weighted formulation significantly improves the approximation relative to the unweighted boundary inner product. Moreover, a direct comparison of the first-row and second-row plots demonstrates that increasing the number of measurement points $N_y$ yields a systematic improvement in accuracy: as $N_y$ grows, the weighted inner product converges towards $\langle \Psi_x, \Psi_z \rangle_{L^2(\bS)}$. This confirms the robustness of the weighting strategy with respect to discretization density.

To extend the computation of $w$ to three dimensions, we recall the Jacobi-Anger expansion in \eqref{Jacobi_anger}. For $z \in B(0,1)$, we obtain
\begin{align}
\int_{B(0,1)} e^{ikz\cdot y} \, dz 
&= 4\pi \int_{0}^1 |z|^2 j_0(k|z|) \, d|z| 
   \sum_{m=-n}^n \int_{\bS^2} Y_n^m(\hat{z}) \sin \theta_z \, d\sigma_{z} \, \overline{Y_n^m(y)} = \frac{4\pi}{k^{1/2}} \, j_1(k) \,,
\end{align}
where $j_0$ and $j_1$ denote spherical Bessel functions.

For the aperture $\Gamma$ defined in \eqref{def_Gamma}, we construct the basis functions
\[
v_n^m(y(\theta,\phi)) := l_{n,m} \, e^{i m \pi \phi / \alpha} \, P_n^m\!\Big(\cos\!\Big(\tfrac{\pi}{\beta}\theta\Big)\Big), 
\quad l_{n,m} := \sqrt{\tfrac{2n+1}{4\pi} \cdot \tfrac{(n-m)!}{(n+m)! \, \alpha \beta}} \,,
\]
where $P_n^m$ are associated Legendre polynomials and $v_n^m$ form an orthonormal system in $L^2(\Gamma)$.

Defining $(UP_M): \mathbb{R}^{(2M+1)(M+1)} \to L^2(D)$ by
\begin{equation}
(UP_M \beta)(z) = \sum_{m,n} \beta_{m,n} \int_{\Gamma} e^{ikz\cdot y} \, v_n^m(y) \, d\sigma_y \,,
\end{equation}
its adjoint is given for $f \in L^2(\Omega)$ by
\begin{equation}
((UP_M)^*f)([m,n]) = \int_{\Omega} \int_{\Gamma} e^{-ikz\cdot \ty} \, v_n^m(\ty) \, d\sigma_{\ty} \, f(z) \, dz \,,
\end{equation}
where the index notation $[m,n] = n^2+n+m+1$ provides a bijective enumeration.

Consequently, the matrix operator is
\begin{align}
\label{LHS_w_R3}
(UP_M)^*(UP_M)([m,n], [\tilde{m},\tilde{n}]) 
&= \int_{\Omega} \int_{\Gamma} e^{-ikz\cdot \ty} \, v_{\tilde{n}}^{\tilde{m}}(\ty) \, d\sigma_{\ty}
    \left[ \int_{\Gamma} e^{ikz\cdot y} \, v_n^m(y) \, d\sigma_y \right] dz \\
&= 4\pi \int_{\Gamma} \left[ \int_{\Gamma} \frac{j_1(k|y-\ty|)}{(k|y-\ty|)^{1/2}} \, v_n^m(y) \, d\sigma_y \right] 
        v_{\tilde{n}}^{\tilde{m}}(\ty) \, d\sigma_{\ty} \,. \notag
\end{align}

Finally, the weight $w$ in $\mathbb{R}^3$ can be obtained by solving the linear system defined by \eqref{LHS_w_R3}, with right-hand side given by $(UP_M)^*v$ for $v(z) = j_0(k|z|)$. Importantly, this construction of $w$ is independent of the choice of mesh points in $\Omega$, and thus the same $w$ applies universally to any discrete sampling configuration. Moreover, suppose the kernel function is chosen as $v(z) = J_0(k|z|)$. In that case, the computed $w$ continues to satisfy, at least approximately, the theoretical conclusions established in Section~\ref{sec_finite}, since these rely fundamentally on the approximation $K_w(x,z) \approx J_0(k|x-z|)$.

\subsection{Stability with noisy measurements}
\label{sec_noisy}

We now study stability when the measured discrete far-field matrix is contaminated by additive noise.  Let
\[
F^\delta = F + \varepsilon\,,
\]
where $\varepsilon$ is a noise matrix with $\|\varepsilon\|_2 \le \delta$.  Write the weighted operators
\[
F_w := w^{1/2} F w^{1/2}, \qquad F_w^\delta := w^{1/2} F^\delta w^{1/2}
= F_w + W,\qquad W := w^{1/2}\,\varepsilon\,w^{1/2}\,.
\]
The exact and noisy solutions of the weighted far-field equations satisfy
\[
F_w g_x^w = \eta_x, \qquad F_w^\delta g_x^{w,\delta} = \eta_x\,,
\]
with $\eta_x = w^{1/2}\Psi_x$ as before. Subtracting these two equations gives
\[
F_w\bigl(g_x^{w,\delta}-g_x^w\bigr) = -W g_x^{w,\delta}.
\]
Assuming $F_w$ is invertible and $\|F_w^{-1}\|_2\|W\|_2<1$ which implies $F_w^\delta$ is also invertible, we obtain the exact identity
\[
g_x^{w,\delta}-g_x^w = -F_w^{-1} W g_x^{w,\delta}
= -F_w^{-1} W (F_w+W)^{-1}\eta_x\,.
\]
Hence, the difference is bounded by
\begin{equation}
\label{noisy_bound}
\|g_x^{w,\delta}-g_x^w\|_2
\le \frac{\|F_w^{-1}\|_2^2 \,\|W\|_2}{1-\|F_w^{-1}\|_2\|W\|_2}\,\|\eta_x\|_2\,.
\end{equation}
Using $\|W\|_2 \le \|w\|_\infty \,\|\varepsilon\|_2 \le \|w\|_\infty \,\delta$ and $\|\eta_x\|_2=\|w^{1/2}\Psi_x\|_2$, \eqref{noisy_bound} yields an explicit stability estimate in terms of the noise level $\delta$, the weight $w$, and the conditioning of $F_w$.

Thus the sensitivity to noise is governed by the factor $\|F_w^{-1}\|_2$ and $\|w\|_\infty$, while $\|\varepsilon\|_2$ measures the noise magnitude.  For the classical linear sampling method, one has $w\equiv 1$, and \eqref{noisy_bound} recovers the usual error dependence.

Next, we relate $\|F_w^{-1}\|_2$ to the spectrum of the discrete kernel.  Under the Born approximation, $K_w(\Omega_{N_z})=U^* w U$ is the kernel matrix on the sampling set $\Omega_{N_z}$ and, by construction, $F_w$ and $K_w(\Omega_{N_z})$ have the same nonzero singular values. Hence
\[
\|F_w^{-1}\|_2 = \frac{1}{\min_j \sigma_j\!\bigl(K_w(\Omega_{N_z})\bigr)}\,,
\]
where $\sigma_j(\cdot)$ denotes the nonzero singular values.

To explain why the weight chosen by \eqref{normal_equation} improves stability, recall that $w$ is selected to match the full-aperture kernel:
\[
\arg\min_{w\in\Lambda_M}\|K_w(0,\cdot)-v(\cdot)\|^2_{L^2(\Omega)}\,,
\]
and, discretely, to make the matrix $K_w(\Omega_{N_z})$ close to the reference matrix $v(\Omega_{N_z})$ whose entries are $J_0(k|z_m-z_n|)$. Recall the Hoffman–Wielandt theorem
\[
\sum_j\bigl(\sigma_j(K_w(\Omega_{N_z}))-\sigma_j(v(\Omega_{N_z}))\bigr)^2
\le \big\|K_w(\Omega_{N_z})-v(\Omega_{N_z})\big\|_F^2\,.
\]
Thus small Frobenius-norm discrepancy implies the singular values of $K_w(\Omega_{N_z})$ are close to those of $v(\Omega_{N_z})$. The full-aperture matrix $v(\Omega_{N_z})$ typically has a significantly larger minimal singular value than the limited-aperture matrix $F$; therefore, a weight $w$ that enforces $K_w\approx v$ will increase the smallest singular value and hence reduce $\|F_w^{-1}\|_2$.


Finally, we validate these observations numerically. For $D=B(0,1/2)$, based on expression \eqref{noisy_bound}, we compute and plot the quantities
\[
\log_{10}\|F^{-1}\|_2 \qquad\text{and}\qquad
\log_{10}\bigl(\|F_w^{-1}\|_2\|w^{1/2}\|_2\bigr),
\]
for various apertures $\alpha$ and numbers of measurement points $N_y$. The results in Fig.~\ref{fig_stable} show that $\|F_w^{-1}\|_2\|w^{1/2}\|_2$ is consistently much smaller than $\|F^{-1}\|_2$, indicating that the weighted inner product significantly improves stability compared with the classical unweighted method.

\begin{figure}
\captionsetup{width=.9\linewidth} \centering \begin{subfigure}[b]{0.48\textwidth} \centering \includegraphics[trim={2.8cm 8.4cm 3.2cm 8.8cm}, clip, scale=0.52]{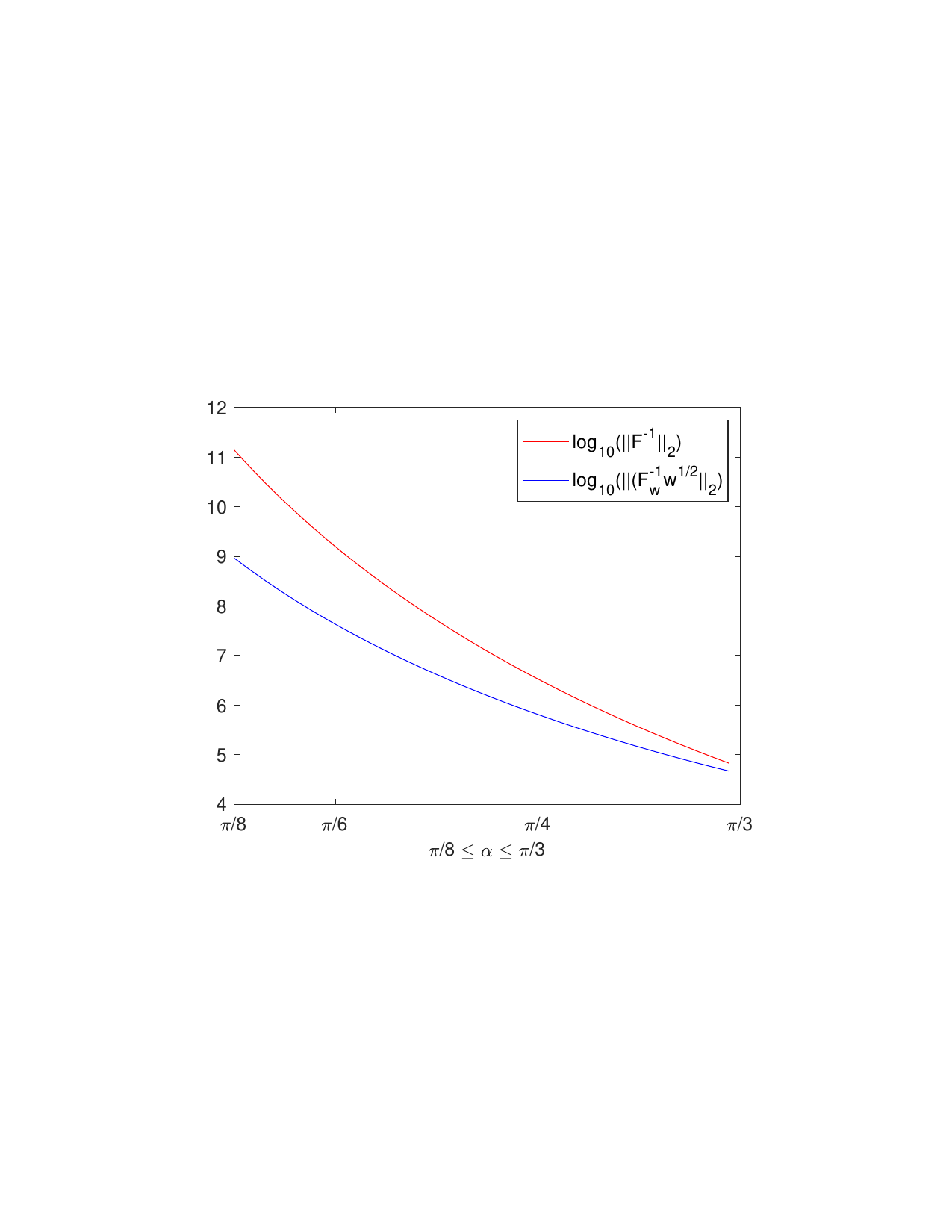} \label{fig_stable_aperture} \end{subfigure} \begin{subfigure}[b]{0.48\textwidth} \centering \includegraphics[trim={2.8cm 8.4cm 3.2cm 8.8cm}, clip, scale=0.52]{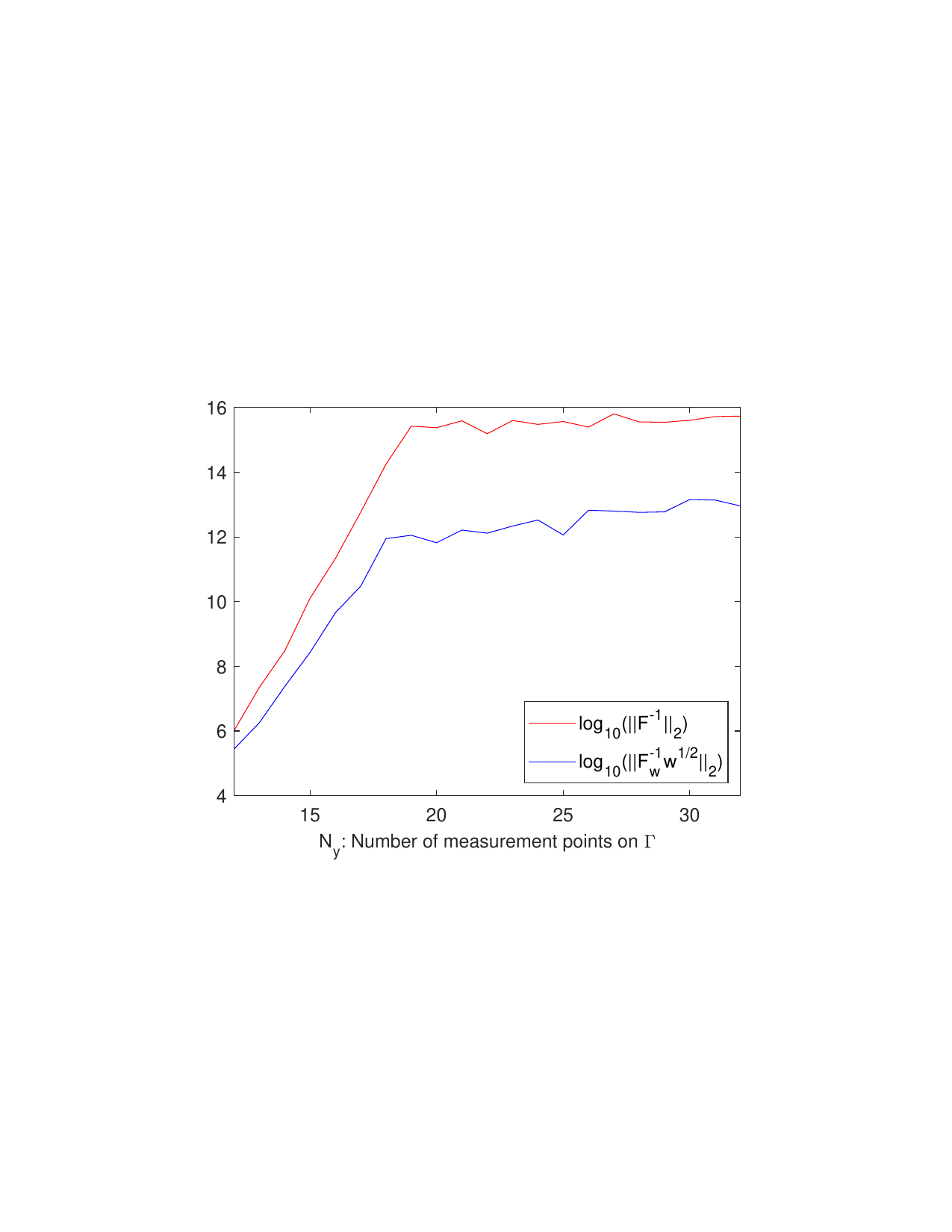} \end{subfigure} \caption{Plots of $\log_{10}(\| F^{-1} \|_2)$ and $\log_{10}(\| F_w^{-1} w^{1/2} \|_2)$ for the linear sampling method and the proposed weighted method. Left: varying aperture $\pi/8 \leq \alpha \leq \pi/3$ with $N_y = 12$, $k = 4$. Right: varying numbers of measurement points $12 \leq N_y \leq 36$ with $\alpha = \pi/3$, $k = 6$.} 
\label{fig_stable}
\end{figure}

\section{Numerical experiments}
\label{sec_numerics}

In this section, we evaluate the accuracy and stability of the proposed novel sampling method and compare it with several existing methods, including the factorization method \cite{factorization_book}, the linear sampling method (LSM) \cite{book_cakoni}, and the multiple signal classification (MUSIC) method \cite{music_first_2003}. In all examples, we set $q(z)=1$ and take $D \subset B(0,1)$ as the support of the inhomogeneous medium, using various shapes to validate the theoretical results in Section~\ref{sec_finite}.
Measurement data is generated by solving \eqref{int_equ_u} to obtain $u^{\infty}(y, \bod)$ for $y, \bod \in \Gamma_{N_y}$, where $\Gamma_{N_y} = \{ y_j \in \Gamma \mid j = 1, \ldots, N_y \}$. The noisy measurement data is then given by
\begin{equation}
    \tilde{u}^{\infty}(y, \bod) = u^{\infty}(y, \bod) + E(u^{\infty}) \delta \epsilon \,,
\end{equation}
where $E(u^{\infty})$ denotes the mean value of $u^{\infty}$ on $\Gamma_{N_y} \times \Gamma_{N_y}$, $\epsilon$ is a random variable uniformly distributed in $[-1,1]$, and $\delta$ represents the noise level. We assume $\tilde{u}^{\infty}$ represents the noisy data obtained from real experiments.

For the index function computation, we obtain $w$ by solving \eqref{normal_equation} proposed in Section~\ref{sec_construction_w} during the offline phase. To further enhance the numerical stability in solving the far-field equation or \eqref{def_gzw} under noisy data, Tikhonov regularization combined with Morozov's discrepancy principle is employed for the factorization method, the linear sampling method, and the proposed method, in the sense that we solve
\[
g_x^{w,\tau} = \arg\min_{g_x^w}\left\{\|F_w g_x^{w}- \eta_x\|_2^{2}+\tau\|g_x^{w}\|_2^{2}\right\},
\]
where the regularization parameter $\tau>0$ is chosen such that
\[
\|F_w g_x^{w,\tau}- \eta_x\|_2 = \delta\,,
\]
with $\delta$ an estimate of the noise level.

In the plots that follow, $I_w$, $I_{LSM}$, $I_{fac}$, and $I_{MUS}$ denote reconstructions using the proposed method in \eqref{def_index}, the LSM in \eqref{def_index_LSM}, the factorization method in \cite{factorization_first}, and the MUSIC method from \cite{music_first_2003}, respectively. The last plot in each figure shows the exact location of the inhomogeneous medium. To improve visualization, we plot the square root of these index functions to identify the boundary of each inclusion better.

\subsection[NE in R2]{Numerical experiments in $\mathbb{R}^2$}

We first present numerical examples in $\mathbb{R}^2$. In the last plot of each figure in this subsection, red dots on $\bS$ represent the locations of measurement points, and the thin black line denotes $\bS$.

\textbf{Example 1.} In this example, shown in Fig.\,\ref{fig_ex1}, we compare the reconstruction results of the proposed method, MUSIC, and the factorization method under various apertures. The inhomogeneous medium is two rectangular bars located at $[0.2,0.4] \times [-0.25,0.25]$ and $[-0.4,0.2] \times [-0.25,0.25]$. The noise level is consistently set at 5\%, and the aperture values are $\pi/2$, $\pi/3$, and $\pi/6$, with 16 measurement points. Fig.\,\ref{fig_ex1} displays the reconstructions from left to right for the proposed method, MUSIC, and the factorization method. The plots in the first to third rows correspond to apertures of $\pi/2$, $\pi/3$, and $\pi/6$, respectively.

\begin{figure}[H]
   \centering
     \begin{subfigure}[b]{1\textwidth}
    \includegraphics[trim={4.5cm 8cm 4.5cm 8cm},clip,scale = 0.33]{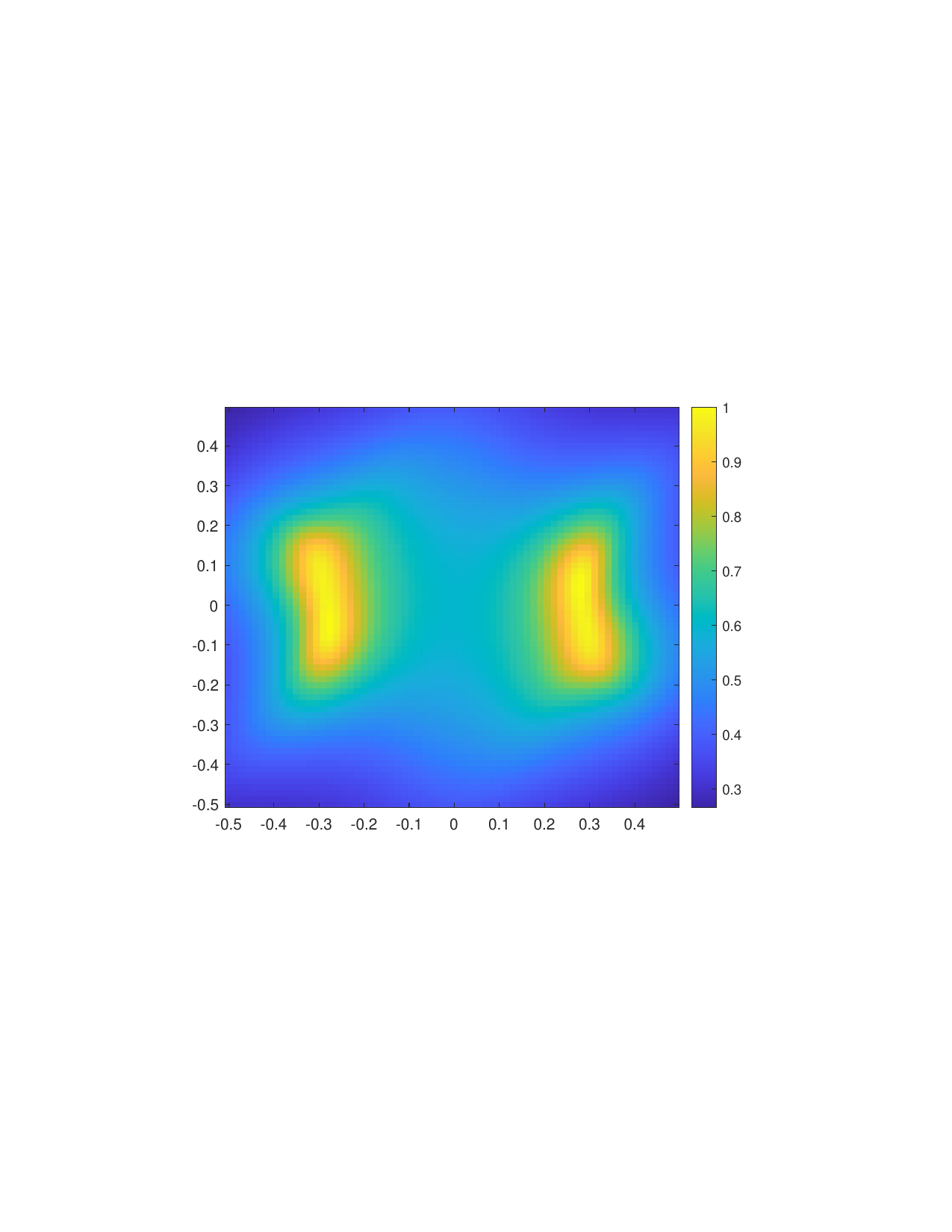}
    \includegraphics[trim={4.5cm 8cm 4.5cm 8cm},clip,scale = 0.33]{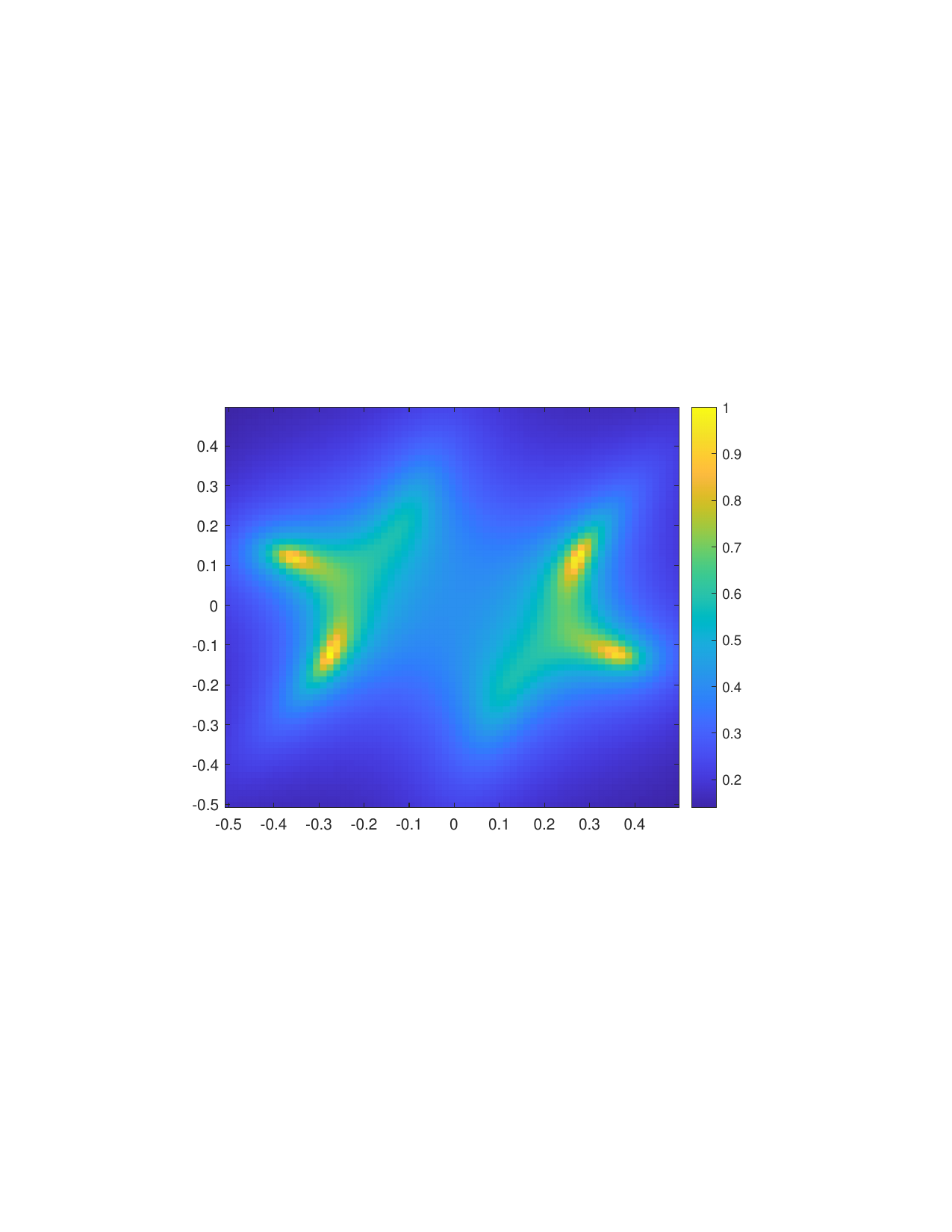}
    \includegraphics[trim={4.5cm 8cm 4.5cm 8cm},clip,scale = 0.33]{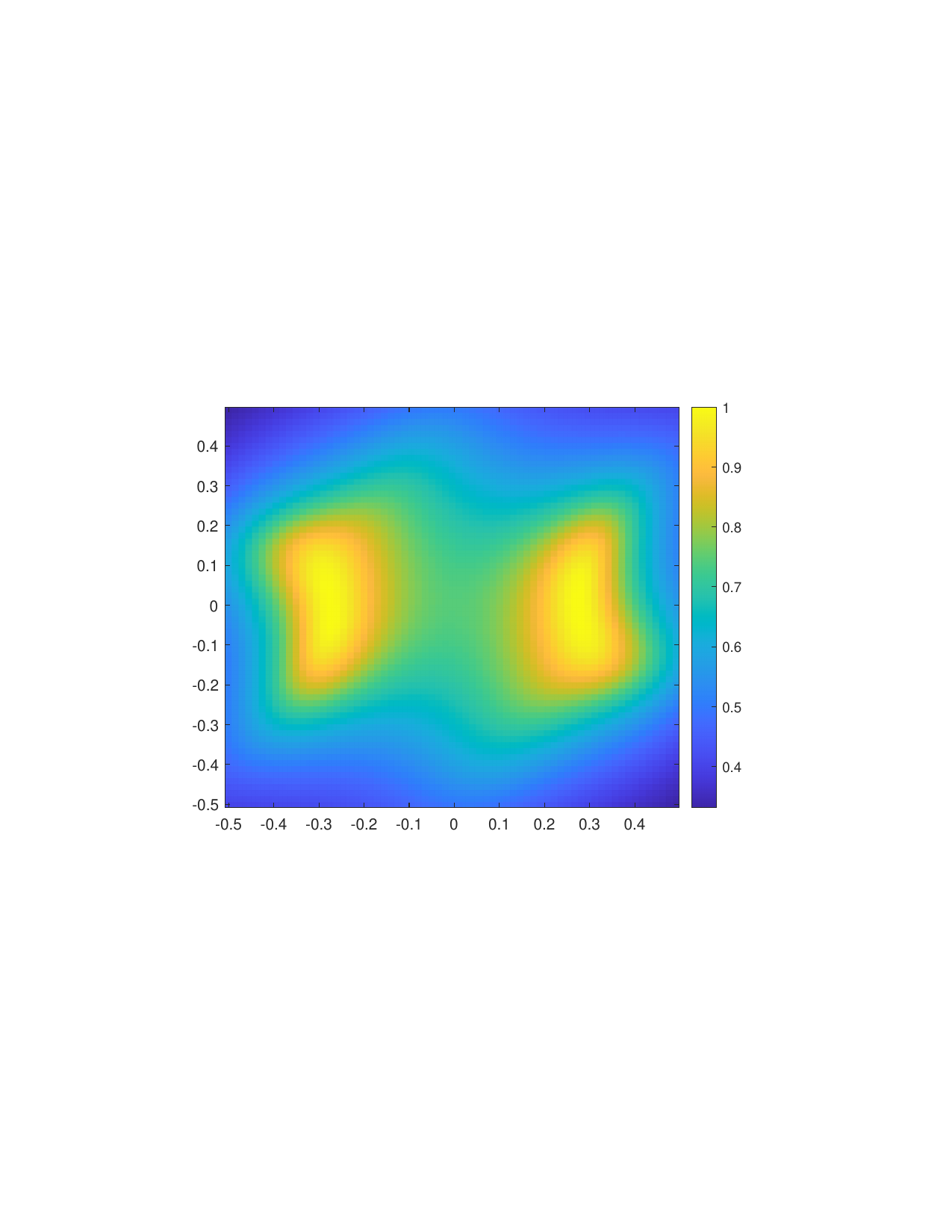}
    \includegraphics[trim={4.5cm 8cm 4.5cm 8cm},clip,scale = 0.33]{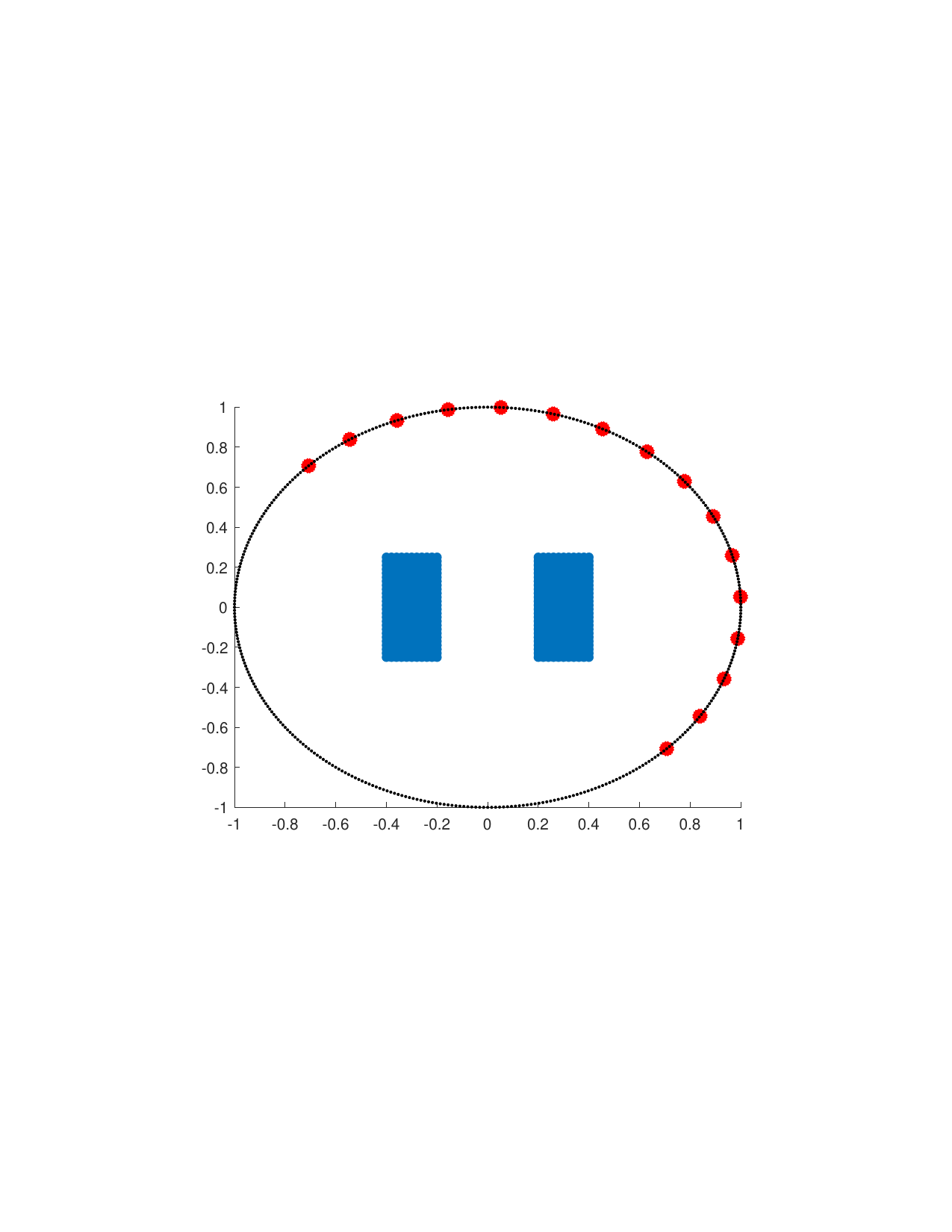}
    \subcaption{$I_w$, $I_{MUS}$, $I_{fac}$, exact inclusion (from left to right) with aperture $\pi/2$} 
    \end{subfigure}
   \centering
     \begin{subfigure}[b]{1\textwidth}
    \includegraphics[trim={4.5cm 8cm 4.5cm 8cm},clip,scale = 0.33]{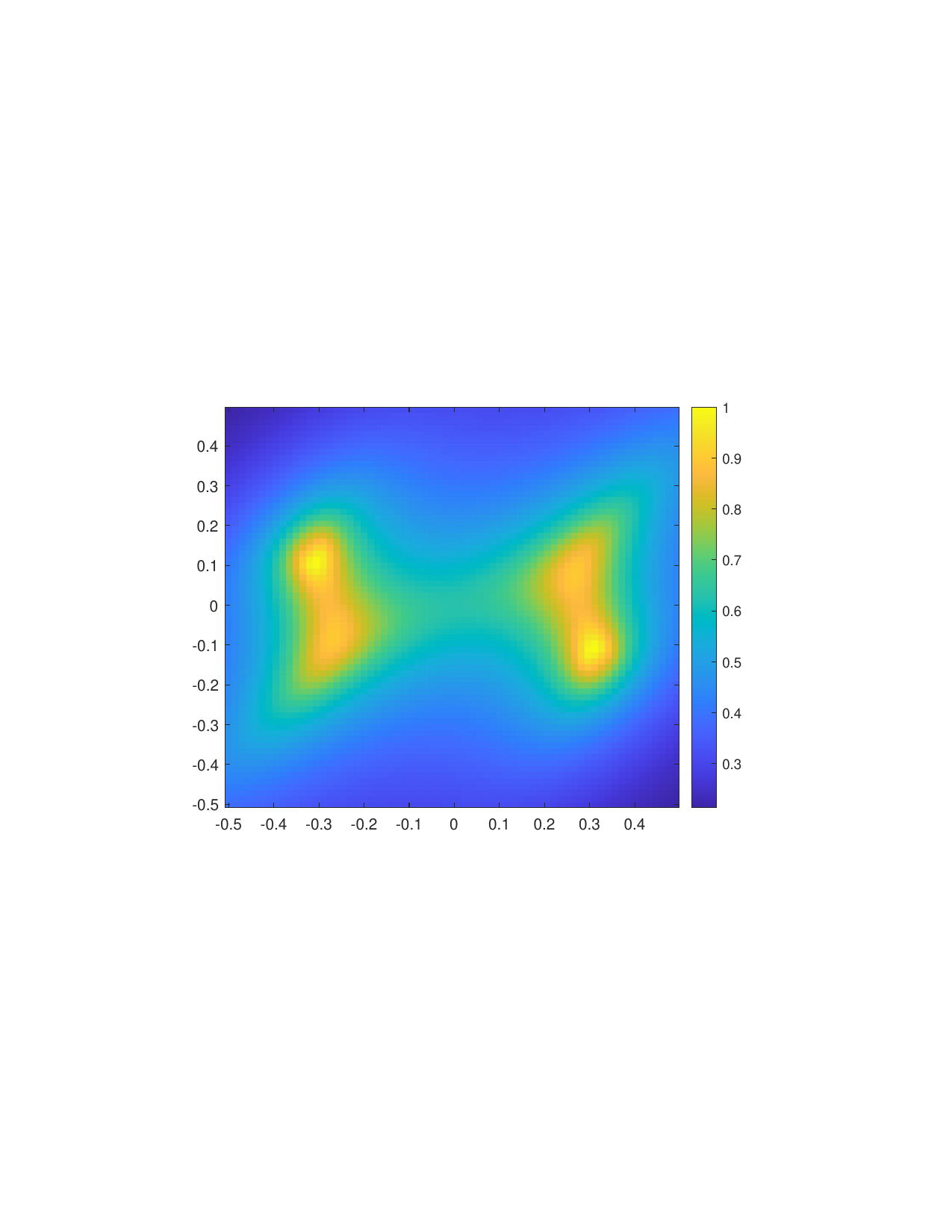}
    \includegraphics[trim={4.5cm 8cm 4.5cm 8cm},clip,scale = 0.33]{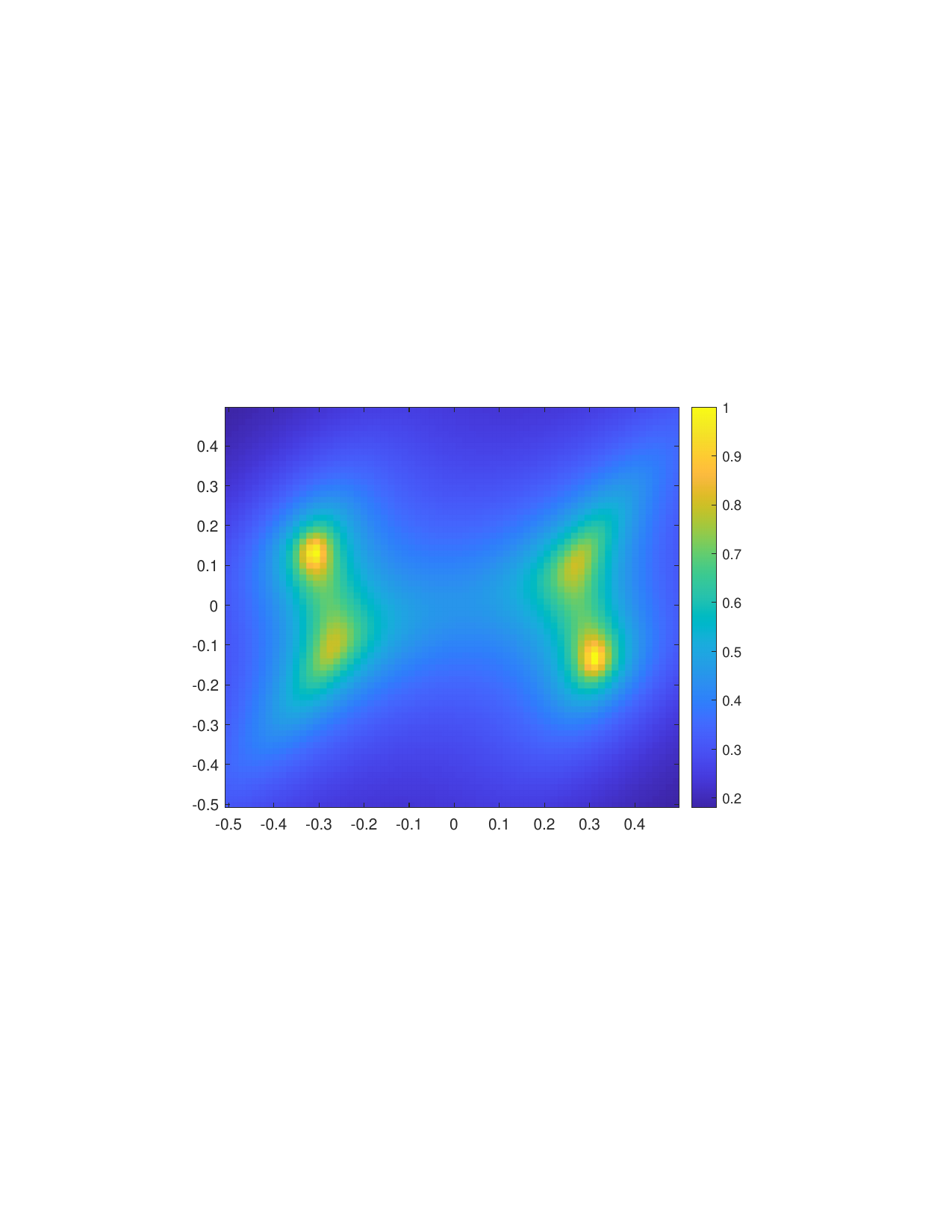}
    \includegraphics[trim={4.5cm 8cm 4.5cm 8cm},clip,scale = 0.33]{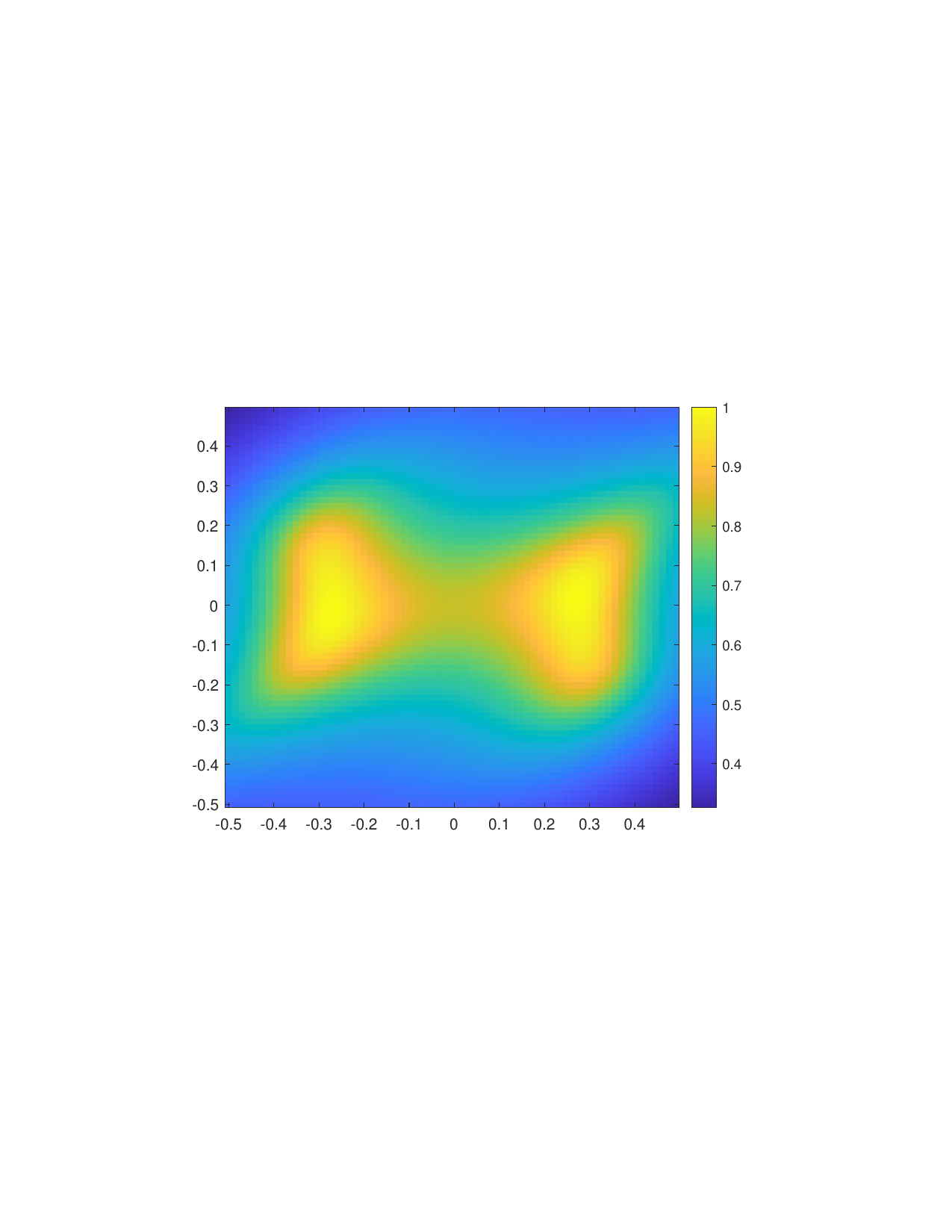}
    \includegraphics[trim={4.5cm 8cm 4.5cm 8cm},clip,scale = 0.33]{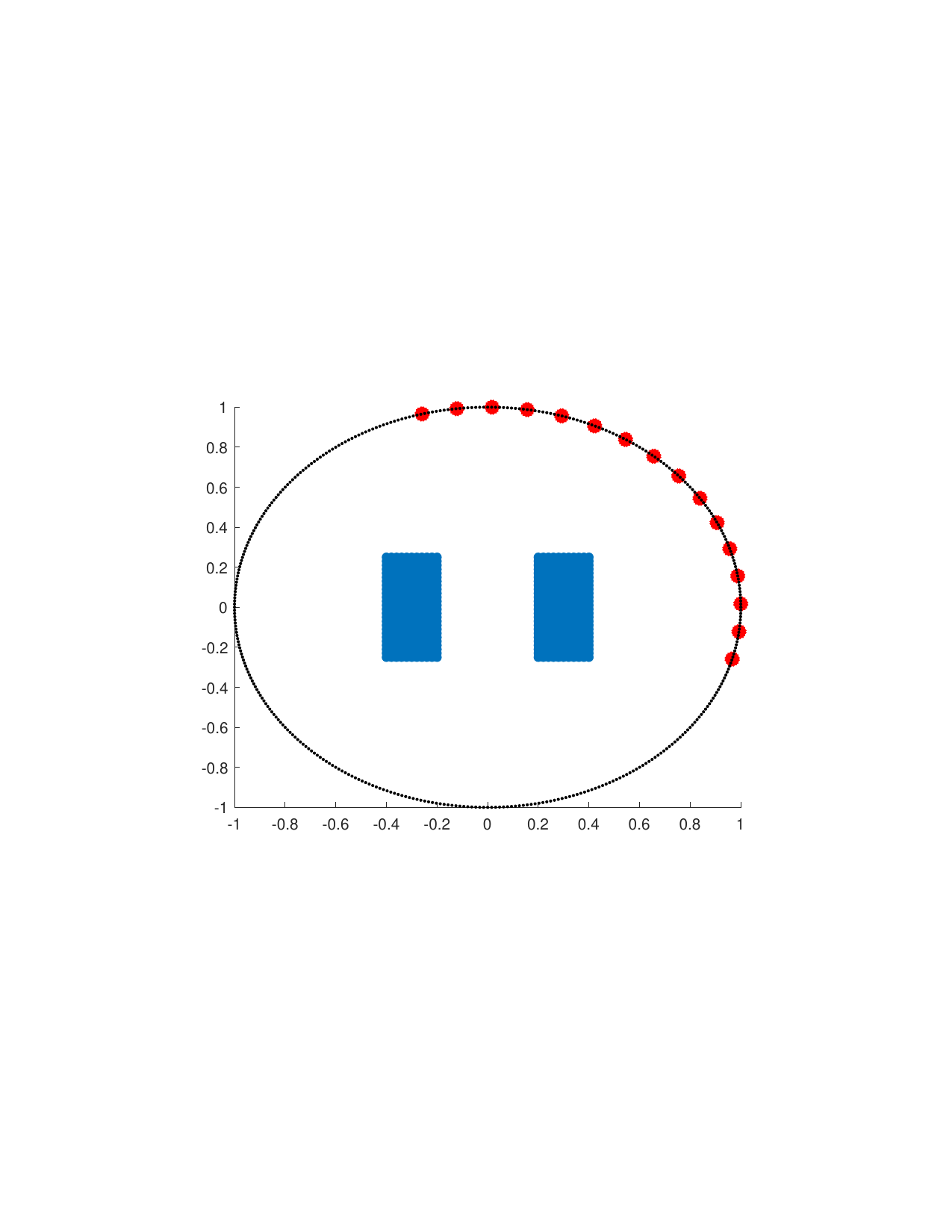}
    \subcaption{$I_w$, $I_{MUS}$, $I_{fac}$, exact inclusion (from left to right) with aperture $\pi/3$} 
     \end{subfigure}
   \centering
     \begin{subfigure}[b]{1\textwidth}
    \includegraphics[trim={4.5cm 8cm 4.5cm 8cm},clip,scale = 0.33]{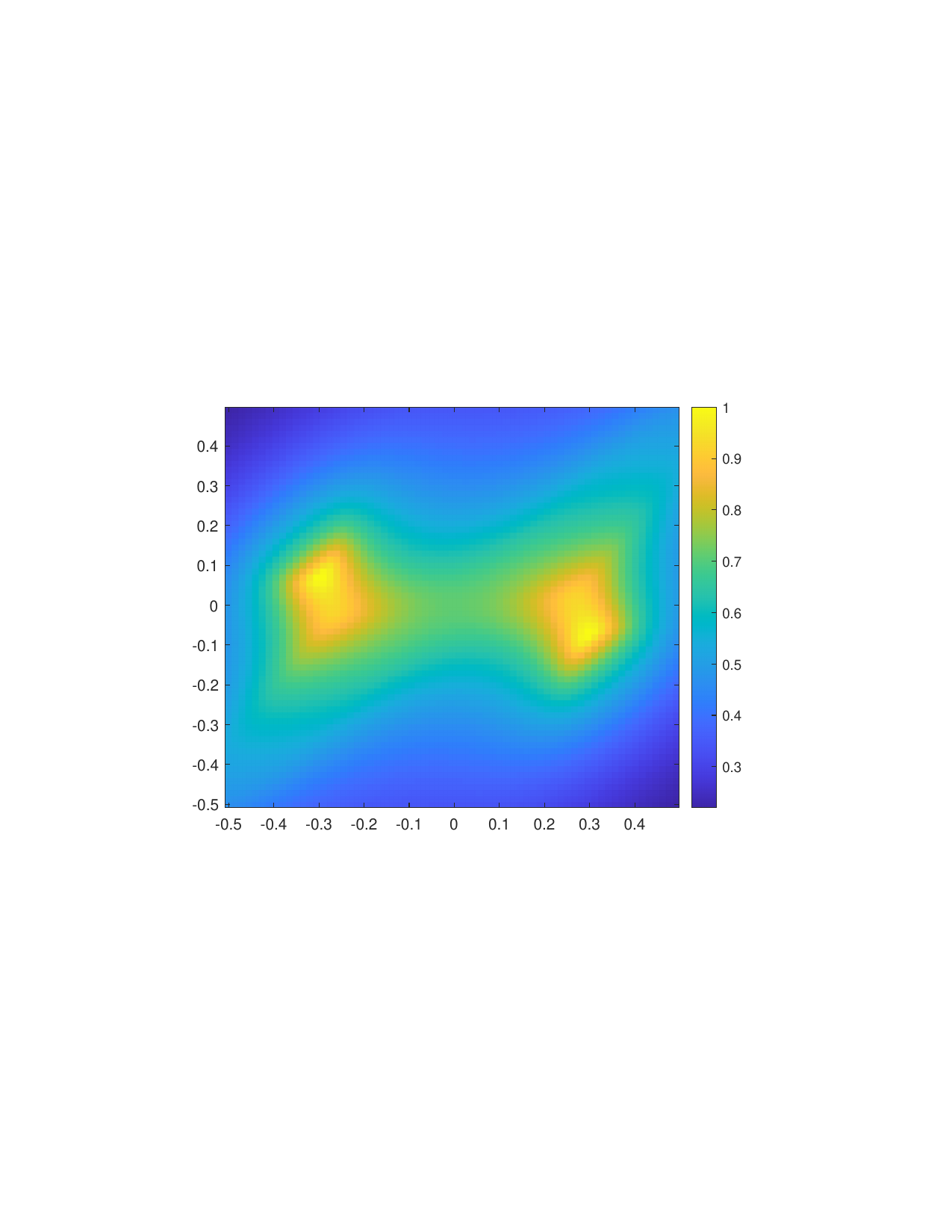}
    \includegraphics[trim={4.5cm 8cm 4.5cm 8cm},clip,scale = 0.33]{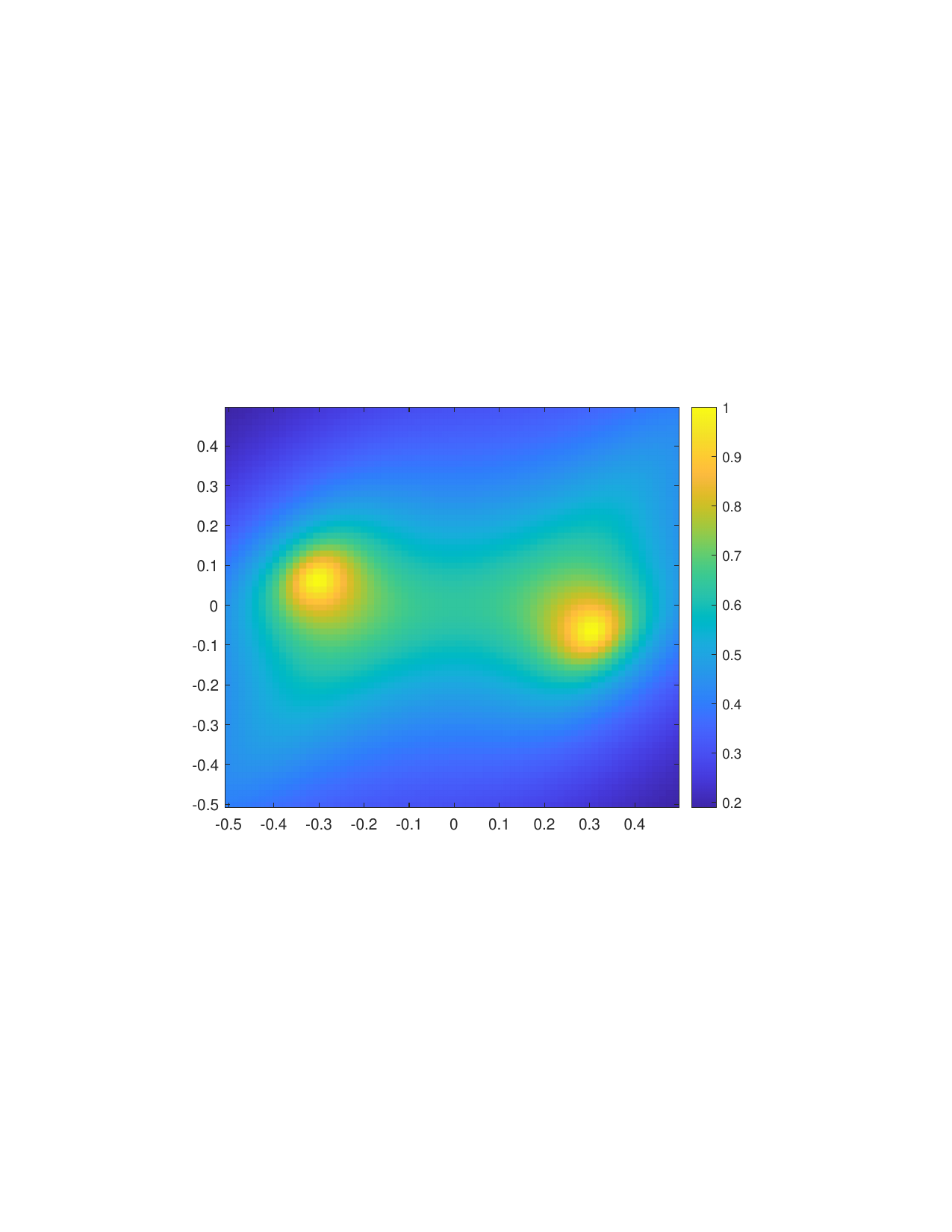}
    \includegraphics[trim={4.5cm 8cm 4.5cm 8cm},clip,scale = 0.33]{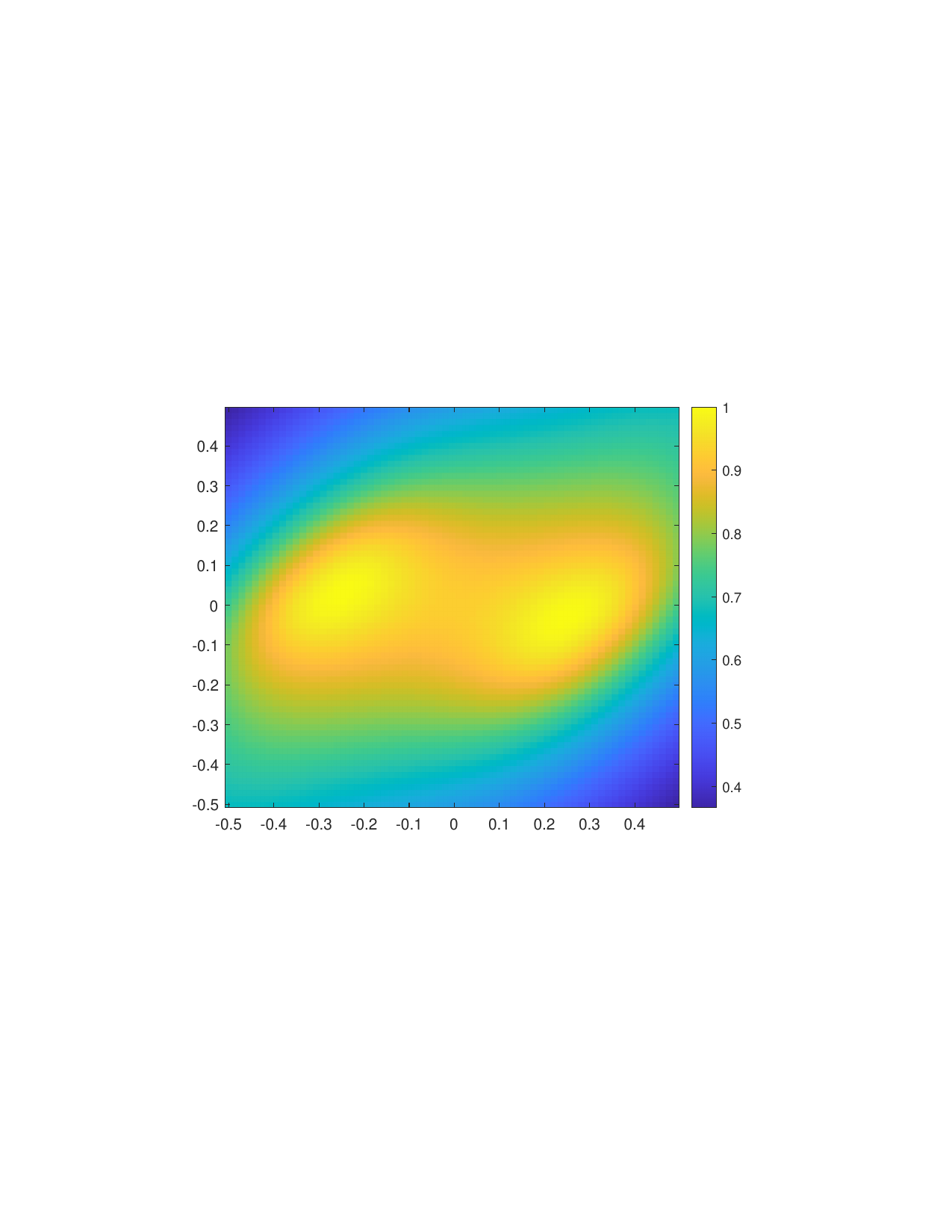}
    \includegraphics[trim={4.5cm 8cm 4.5cm 8cm},clip,scale = 0.33]{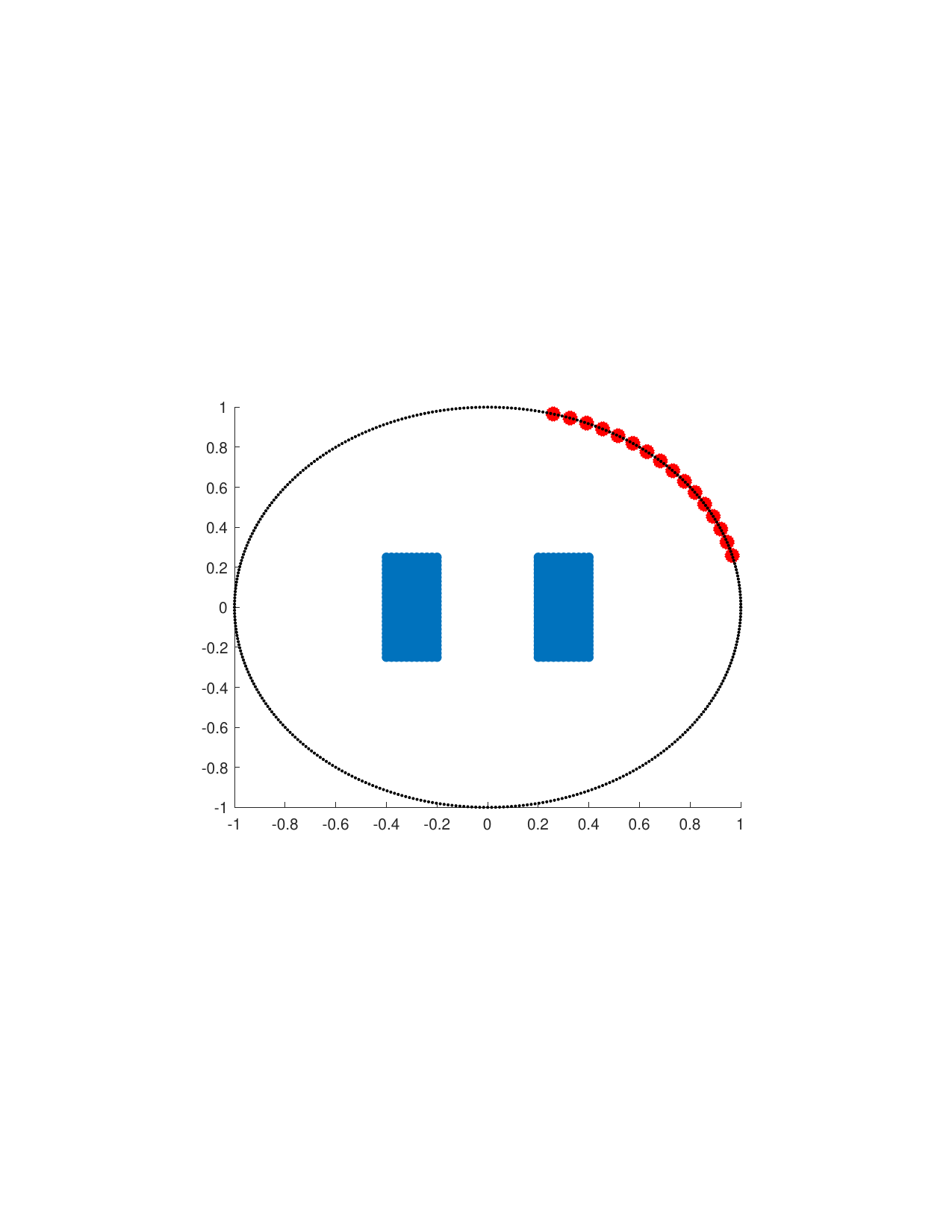}
    \subcaption{$I_w$, $I_{MUS}$, $I_{fac}$, exact inclusion (from left to right) with aperture $\pi/6$} 
    \end{subfigure}
    \caption{\textbf{Example 1.} Reconstructions using $I_w$, $I_{MUS}$, and $I_{fac}$ (from left to right) for different apertures, $k = 6$, 5\% random noise, and 16 measurement points. The final column shows the exact inhomogeneous medium and the locations of measurement points.}
\label{fig_ex1}
\end{figure}

From Fig.\,\ref{fig_ex1}, it is evident that as the aperture decreases, the accuracy of the reconstruction deteriorates. The proposed method, depicted in the first column of each row, remains robust even with limited aperture data, effectively locating the inclusion when the aperture is $\pi/3$ and $\pi/6$. In contrast, the reconstructions by MUSIC and the factorization method, shown in the last two columns, are less accurate and fail to identify the two inhomogeneous inclusions under the same conditions clearly.

\textbf{Example 2.} In this example, illustrated in Fig.\,\ref{fig_ex2}, we reconstruct four point scatters located at $[\pm 0.3, \pm 0.3]$. The noise level is set at 5\%, with an aperture of $\pi/3$ and 16 measurement points on the boundary.

\begin{figure}[H]
    \centering
     \begin{subfigure}[b]{0.3\textwidth}
    \centering
    \includegraphics[trim={4.5cm 8cm 4.5cm 8cm},clip,scale = 0.33]{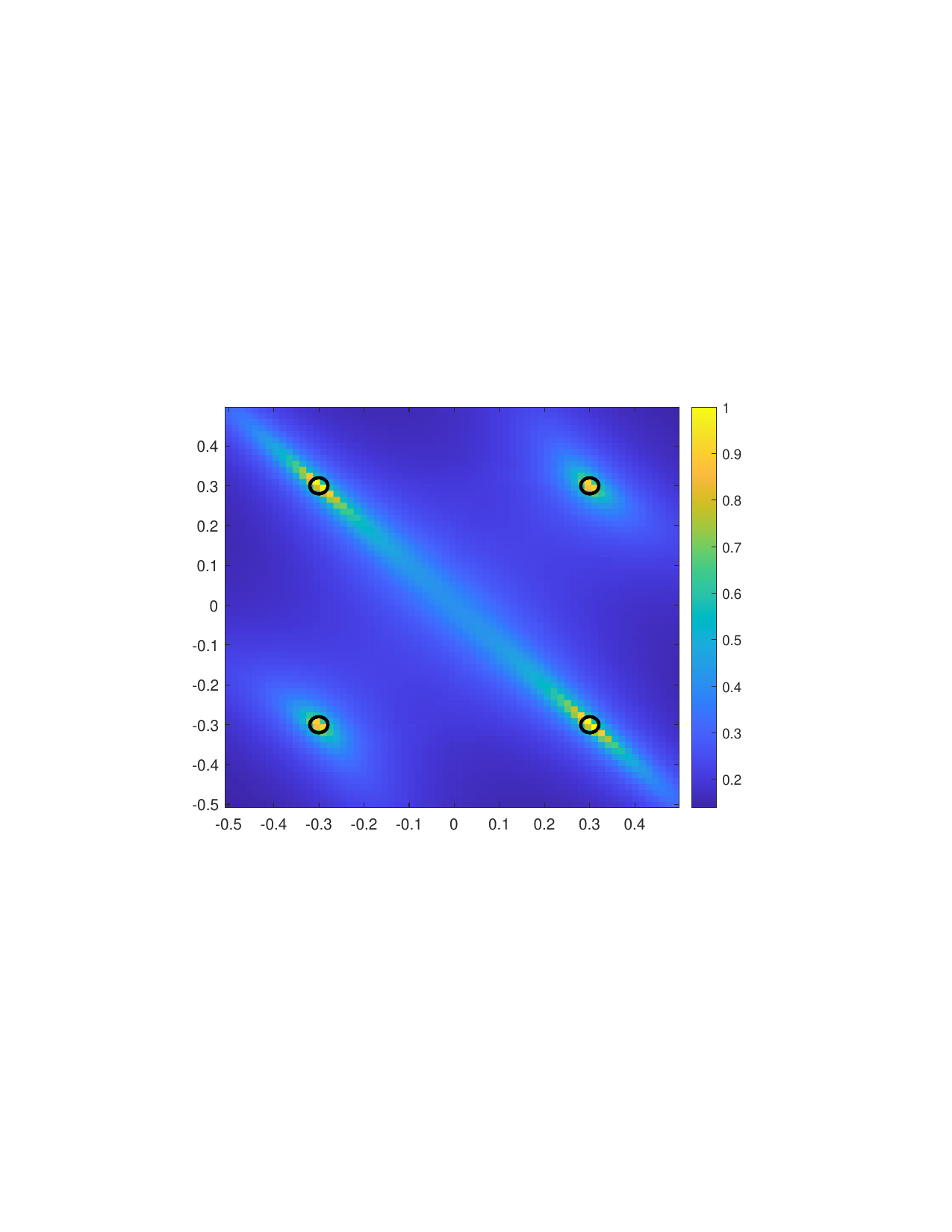}
    \caption{$I_w$}
    \end{subfigure}
    \quad     
    \begin{subfigure}[b]{0.3\textwidth}
    \centering
    \includegraphics[trim={4.5cm 8cm 4.5cm 8cm},clip,scale = 0.33]{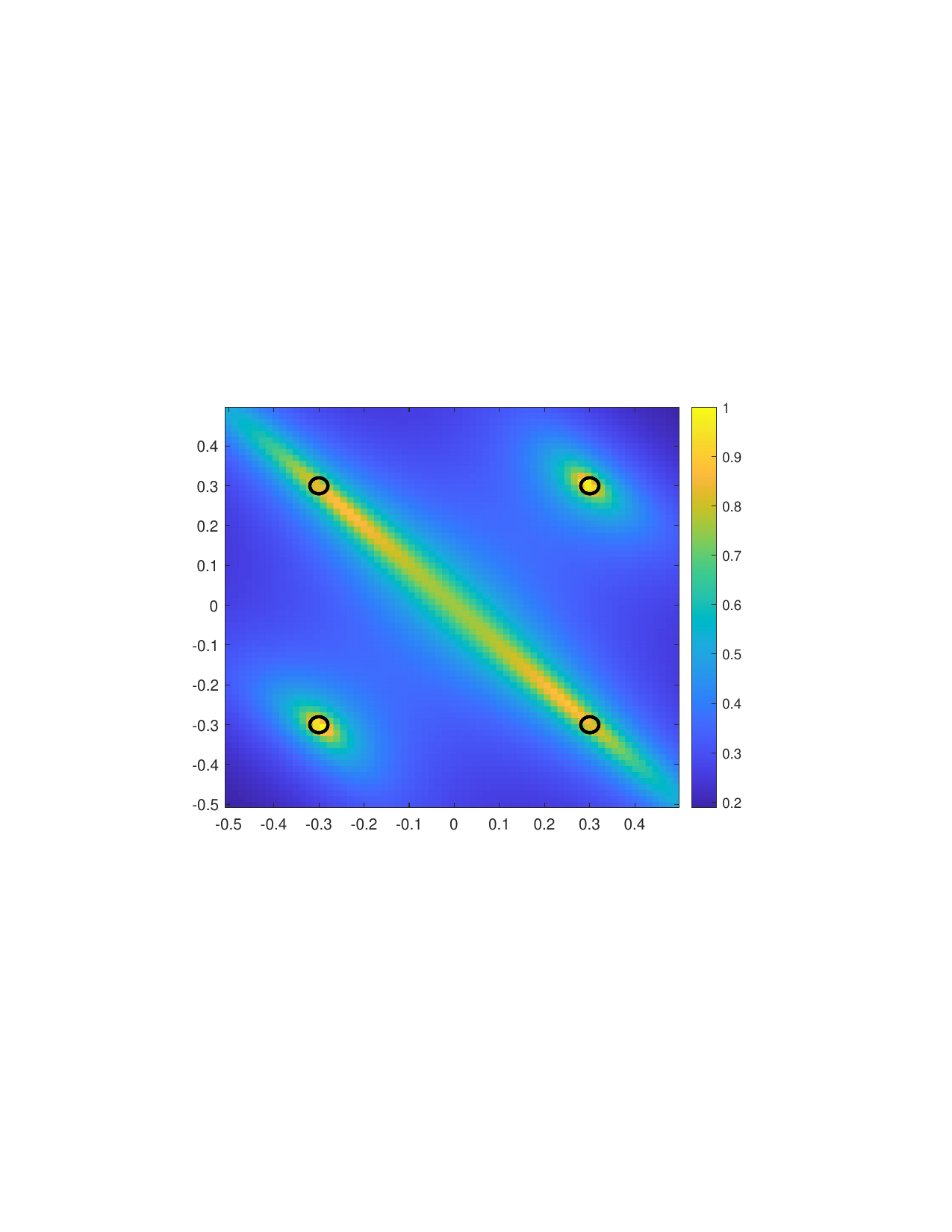}
    \caption{$I_{LSM}$}
    \end{subfigure}
    \quad
    \begin{subfigure}[b]{0.3\textwidth}
    \centering
    \includegraphics[trim={4.5cm 8cm 4.5cm 8cm},clip,scale = 0.33]{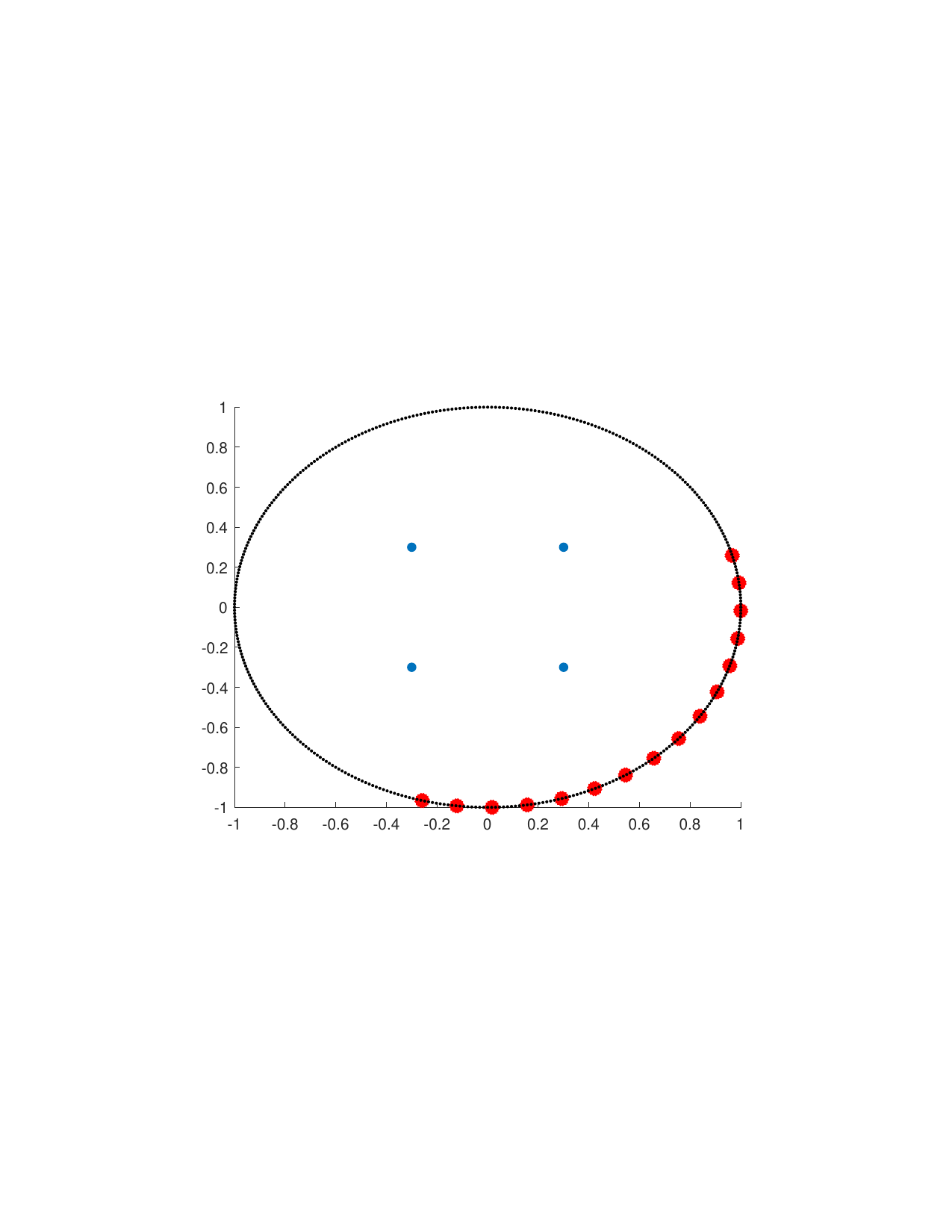}
    \caption{Exact inclusion}
    \end{subfigure}
    \caption{\textbf{Example 2.} Reconstructions of $I_w$ (left) and $I_{LSM}$ (middle) with aperture $\pi/3$, $k = 6$, 5\% random noise, and 16 measurement points; and the exact inclusion (right).}
\label{fig_ex2}
 \end{figure}

From Fig.\,\ref{fig_ex2}, it is evident that the proposed method, depicted in the left plot, clearly recovers the locations of the four point scatters. In contrast, the $I_{LSM}$ method, shown in the middle plot, has difficulty identifying the locations of the point scatters, particularly those at $[-0.3, 0.3]$ and $[0.3, -0.3]$.

\textbf{Example 3.} In this example, we reconstruct a non-convex kite-shaped inclusion, with its boundary parameterized as $0.09 \times [0.15 \sin(t), \, (\cos(t) + 0.65 \cos(2t) - 0.65)]$ for $t \in [0, 2\pi]$, with $k = 12$. The noise level is set at 5\%, and the aperture is $\pi/3$ with 32 measurement points on $\bS$. The black line in the first two plots indicates the boundary of the inclusion. The results are shown in Fig.\,\ref{fig_ex3}.

\begin{figure}[H]
    \centering
     \begin{subfigure}[b]{0.3\textwidth}
    \centering
    \includegraphics[trim={4.5cm 8cm 4.5cm 8cm},clip,scale = 0.33]{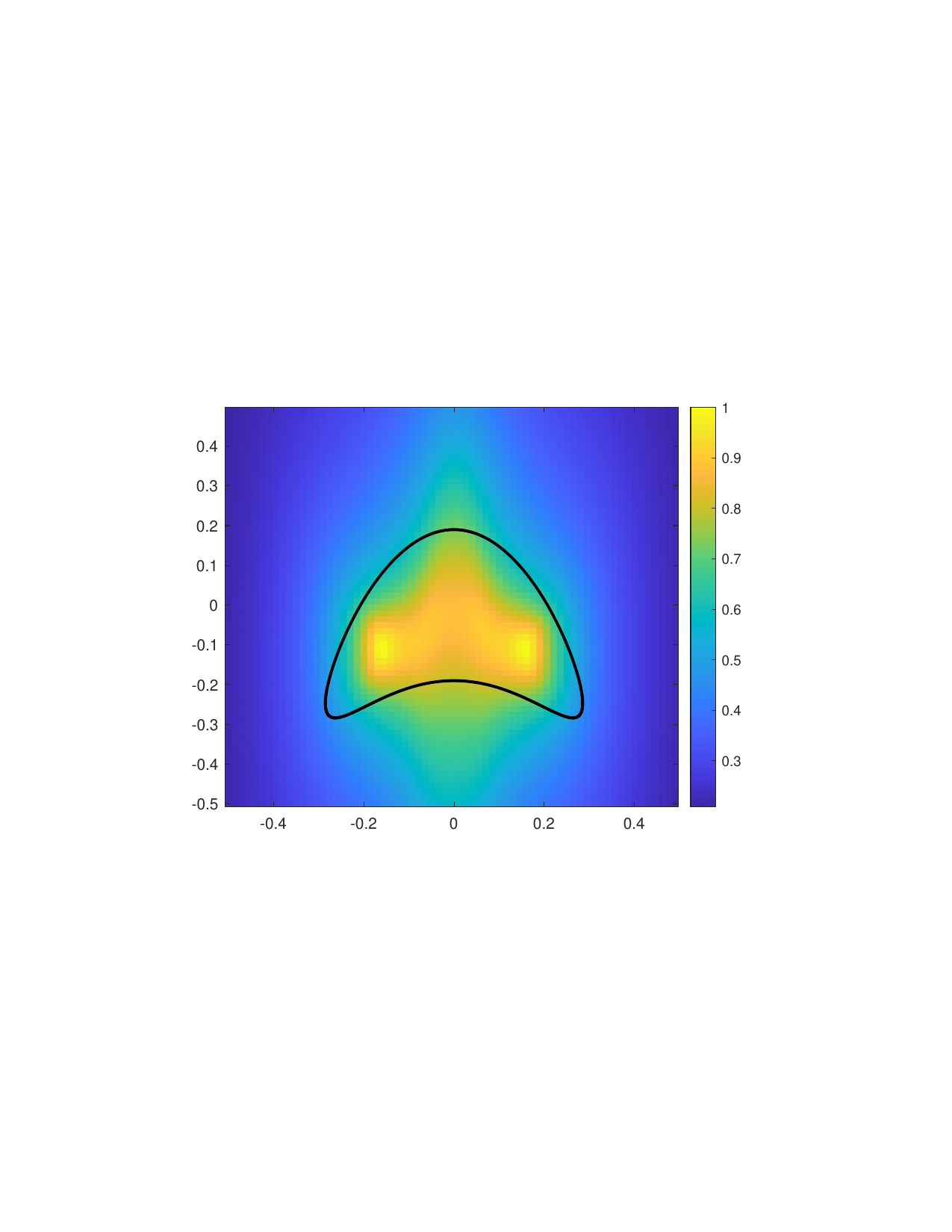}
    \caption{$I_w$}
    \end{subfigure}
    \quad     
    \begin{subfigure}[b]{0.3\textwidth}
    \centering
    \includegraphics[trim={4.5cm 8cm 4.5cm 8cm},clip,scale = 0.33]{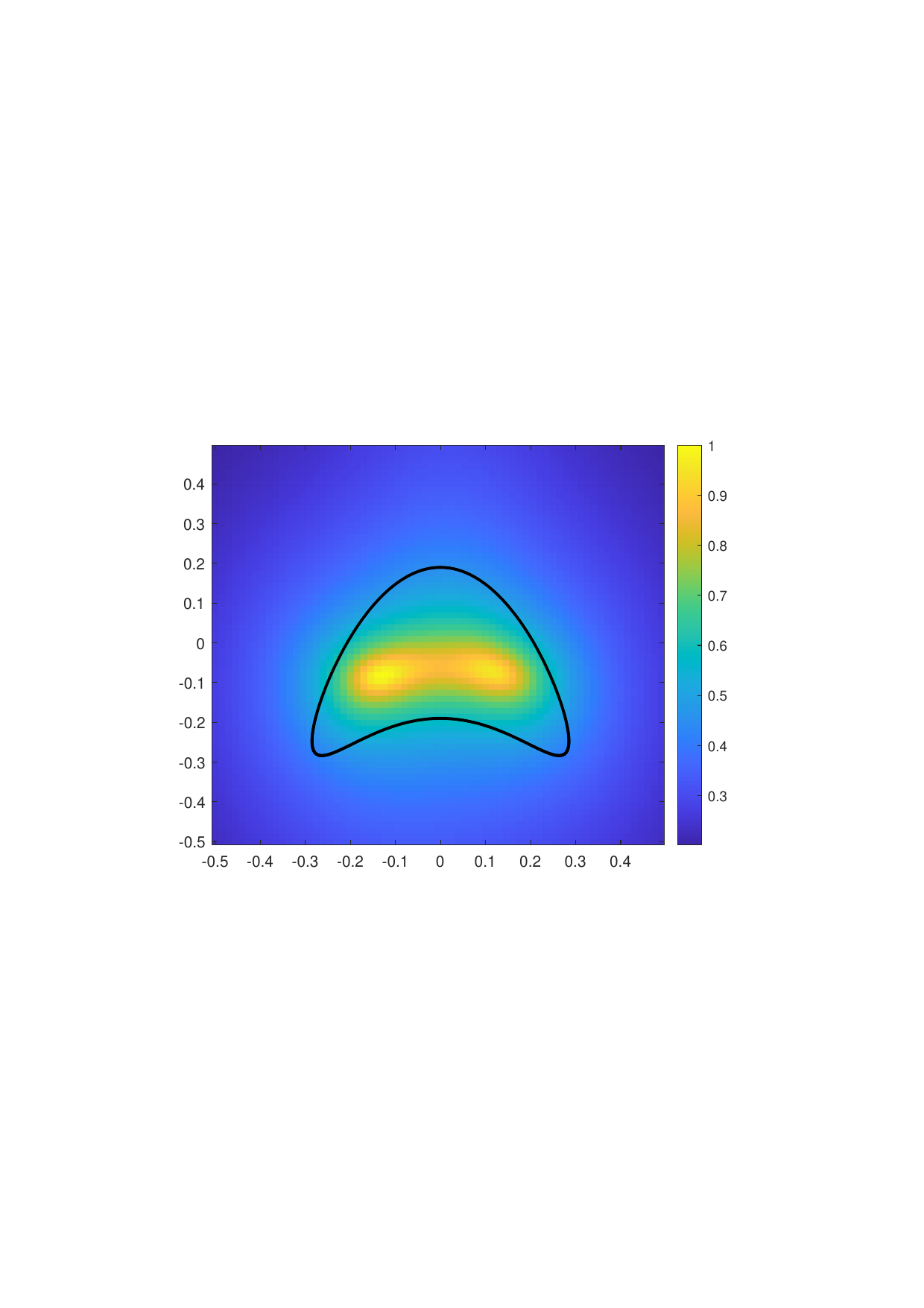}
    \caption{$I_{LSM}$}
    \end{subfigure}
    \quad
    \begin{subfigure}[b]{0.3\textwidth}
    \centering
    \includegraphics[trim={4.5cm 8cm 4.5cm 8cm},clip,scale = 0.33]{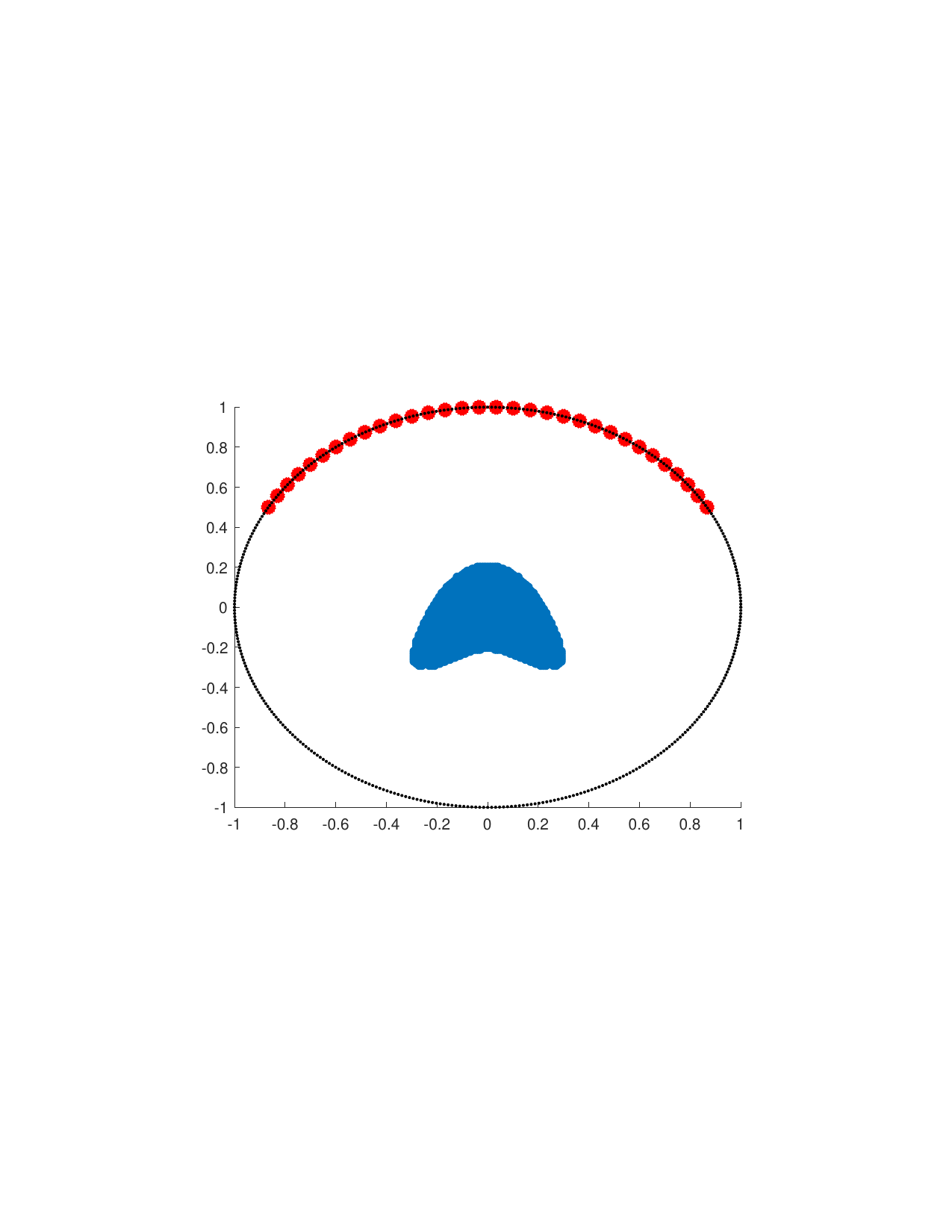}
    \caption{Exact inclusion}
    \end{subfigure}
   \caption{\textbf{Example 3.} Reconstructions of $I_w$ (left) and $I_{LSM}$ (middle) with aperture $\pi/3$, $k = 12$, 5\% random noise, and 32 measurement points; and the exact inclusion (right).}
\label{fig_ex3}
 \end{figure}

From Fig.\,\ref{fig_ex3}, the reconstruction using the proposed method (left plot) accurately identifies the boundary of the kite-shaped inclusion and confirms the presence of a single inclusion. In contrast, the reconstruction using $I_{LSM}$ (middle plot) incorrectly suggests the presence of two separate inclusions.

\textbf{Example 4.} This example, shown in Fig.\,\ref{fig_ex4}, evaluates reconstruction with a smaller wave number, $k = 3$. The inhomogeneous medium is supported in $[0.2, 0.4] \times [-0.25, 0.25]$ and $[-0.4, 0.2] \times [-0.25, 0.25]$. The noise level is 5\%, and the aperture is $\pi/3$ with 32 measurement points on the boundary.

\begin{figure}[H]
    \centering
     \begin{subfigure}[b]{0.3\textwidth}
    \centering
    \includegraphics[trim={4.5cm 8cm 4.5cm 8cm},clip,scale = 0.33]{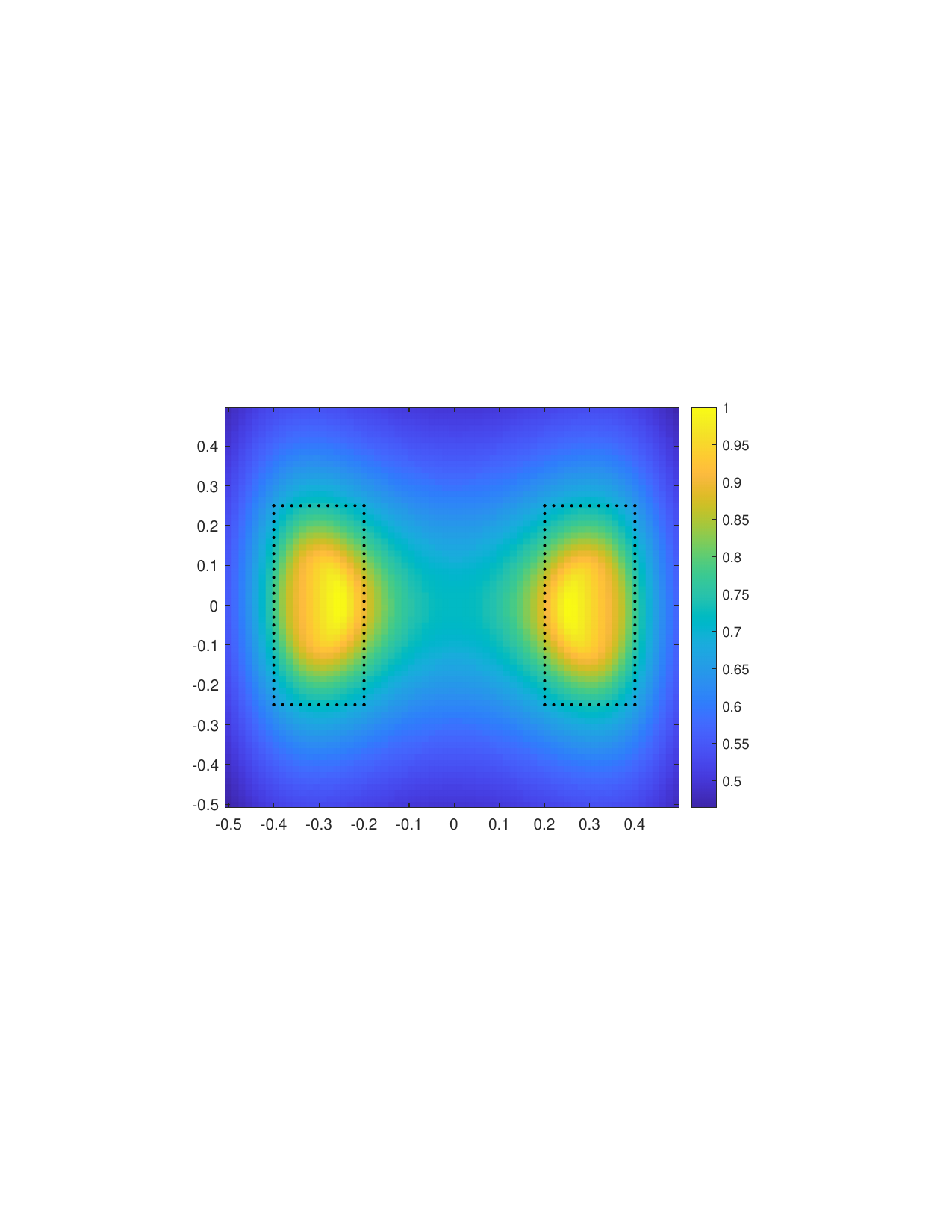}
    \caption{$I_w$}
    \end{subfigure}
    \quad     
    \begin{subfigure}[b]{0.3\textwidth}
    \centering
    \includegraphics[trim={4.5cm 8cm 4.5cm 8cm},clip,scale = 0.33]{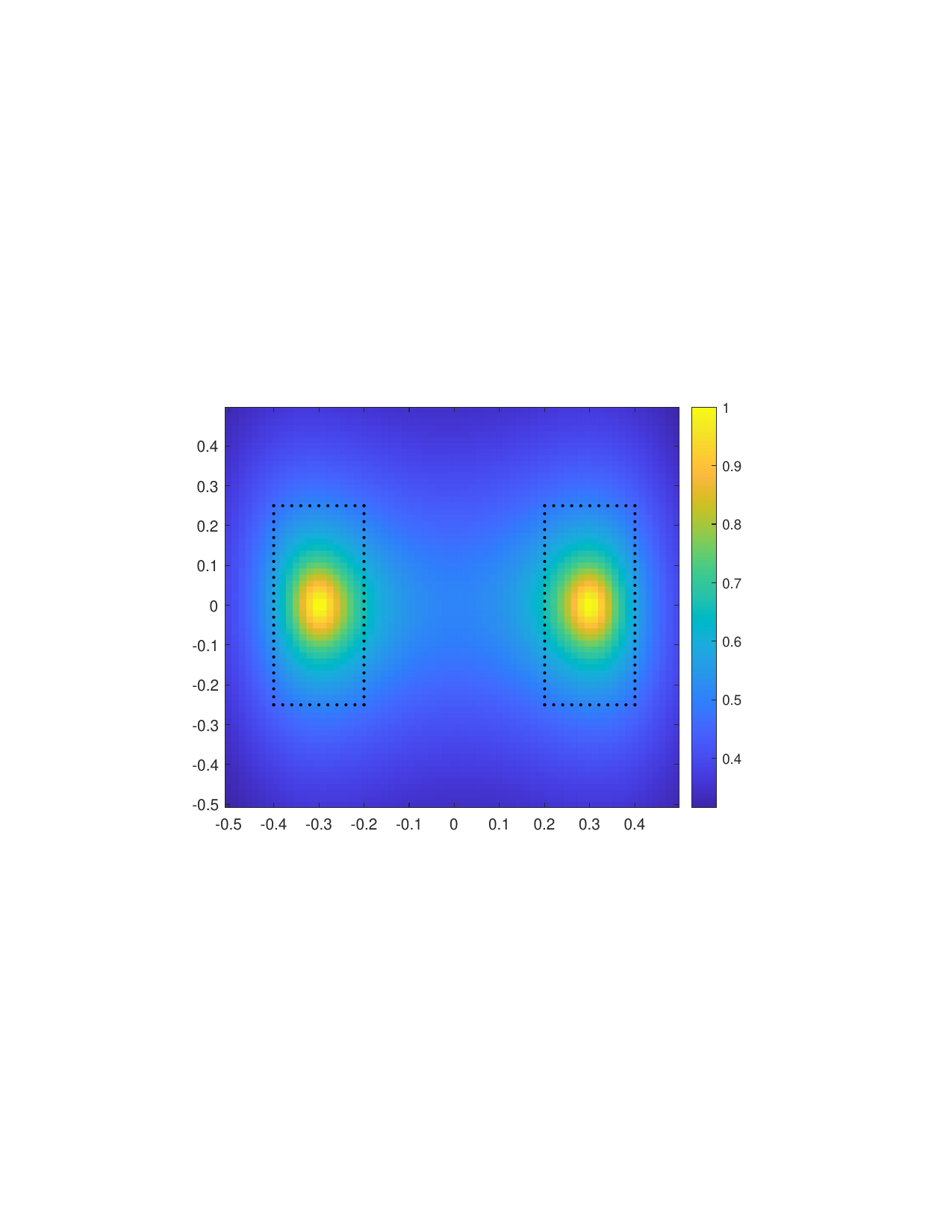}
    \caption{$I_{LSM}$}
    \end{subfigure}
    \quad
    \begin{subfigure}[b]{0.3\textwidth}
    \centering
    \includegraphics[trim={4.5cm 8cm 4.5cm 8cm},clip,scale = 0.33]{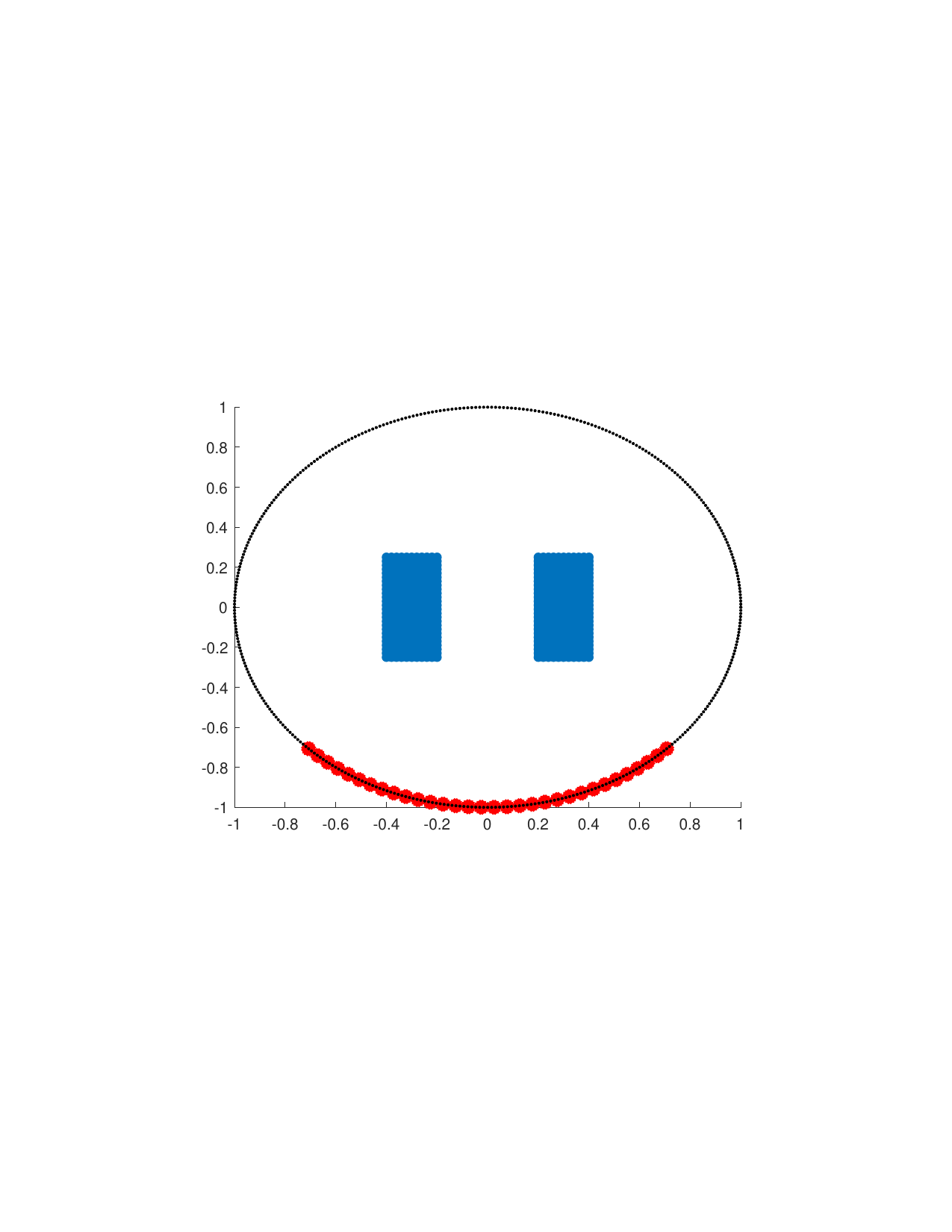}
    \caption{Exact inclusion}
    \end{subfigure}
 \caption{\textbf{Example 4.} Reconstructions of $I_w$ (left) and $I_{LSM}$ (middle) with aperture $\pi/3$, $k = 3$, 8\% random noise, and 32 measurement points; and the exact inclusion (right).}
\label{fig_ex4}
 \end{figure}

From Fig.\,\ref{fig_ex4}, it is clear that the proposed method (left plot) accurately recovers the size and location of the two inclusions, even with a smaller wave number $k = 3$. The reconstruction by $I_{LSM}$ (middle plot) is less accurate compared to the proposed method.

\textbf{Example 5.} In this example, shown in Fig.\,\ref{fig_ex5}, we compare the proposed method with the LSM method and assess the robustness of reconstruction under varying noise levels. We assume two elliptical inhomogeneous inclusions with boundaries parameterized by $[3 \cos t / r^2 - 0.25, \sin t / r^2 - 0.3]$ and $[\cos t / r^2 + 0.3, 3 \sin t / r^2 + 0.25]$ for $t \in [0, 2\pi]$ with $r = 0.08$. The figures present reconstructions with noise levels of 1\%, 10\%, and 20\%. The first row displays results using $I_w$, while the second row shows results using $I_{LSM}$, both with $k = 8$, an aperture of $\pi/3$, and 16 measurement points on the boundary.

\begin{figure}[H]
    \centering
     \begin{subfigure}[b]{1\textwidth}
    \centering
    \includegraphics[trim={4.5cm 9.5cm 4.5cm 9.5cm},clip,scale = 0.33]{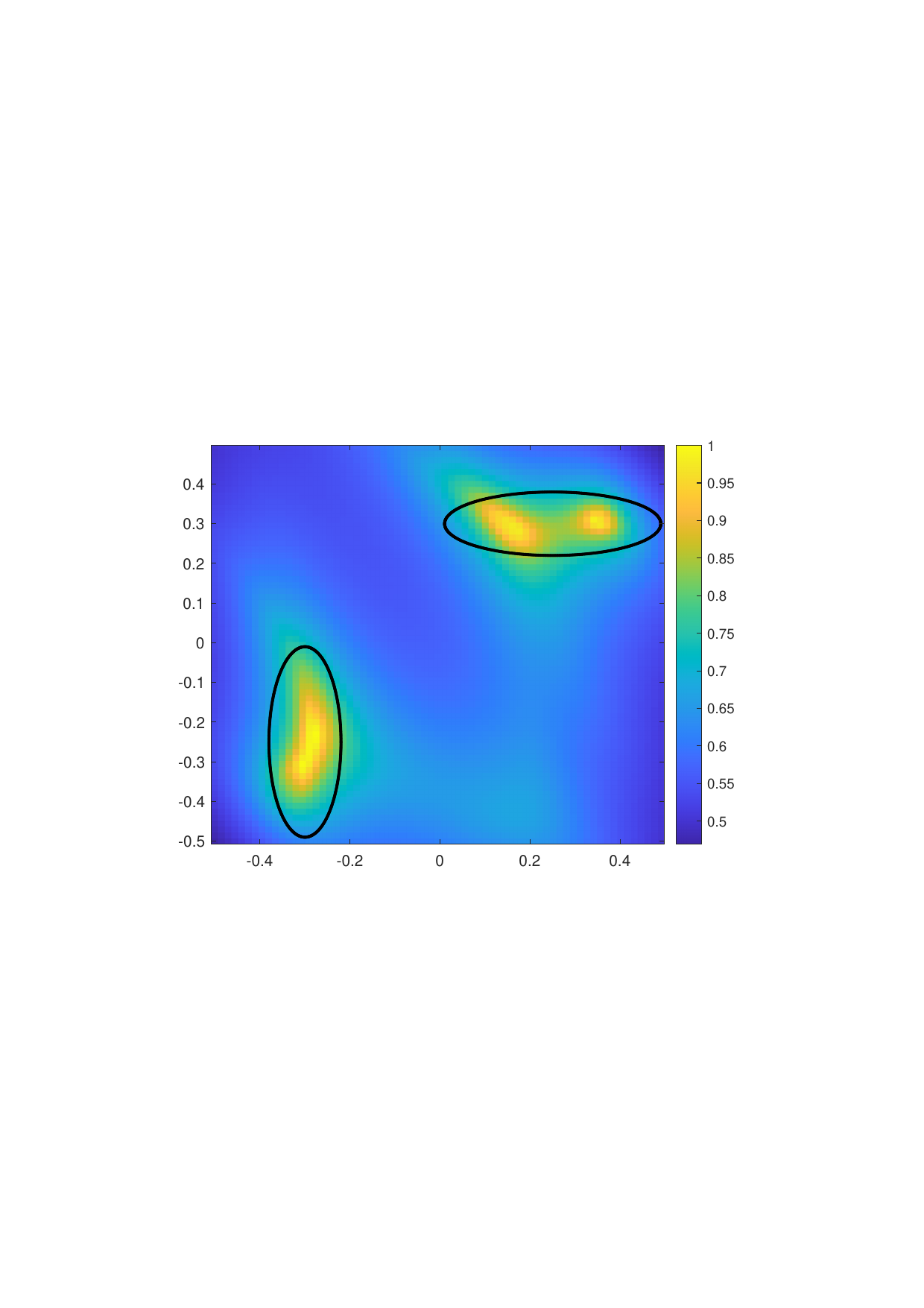}\quad
    \includegraphics[trim={4.5cm 9.5cm 4.5cm 9.5cm},clip,scale = 0.33]{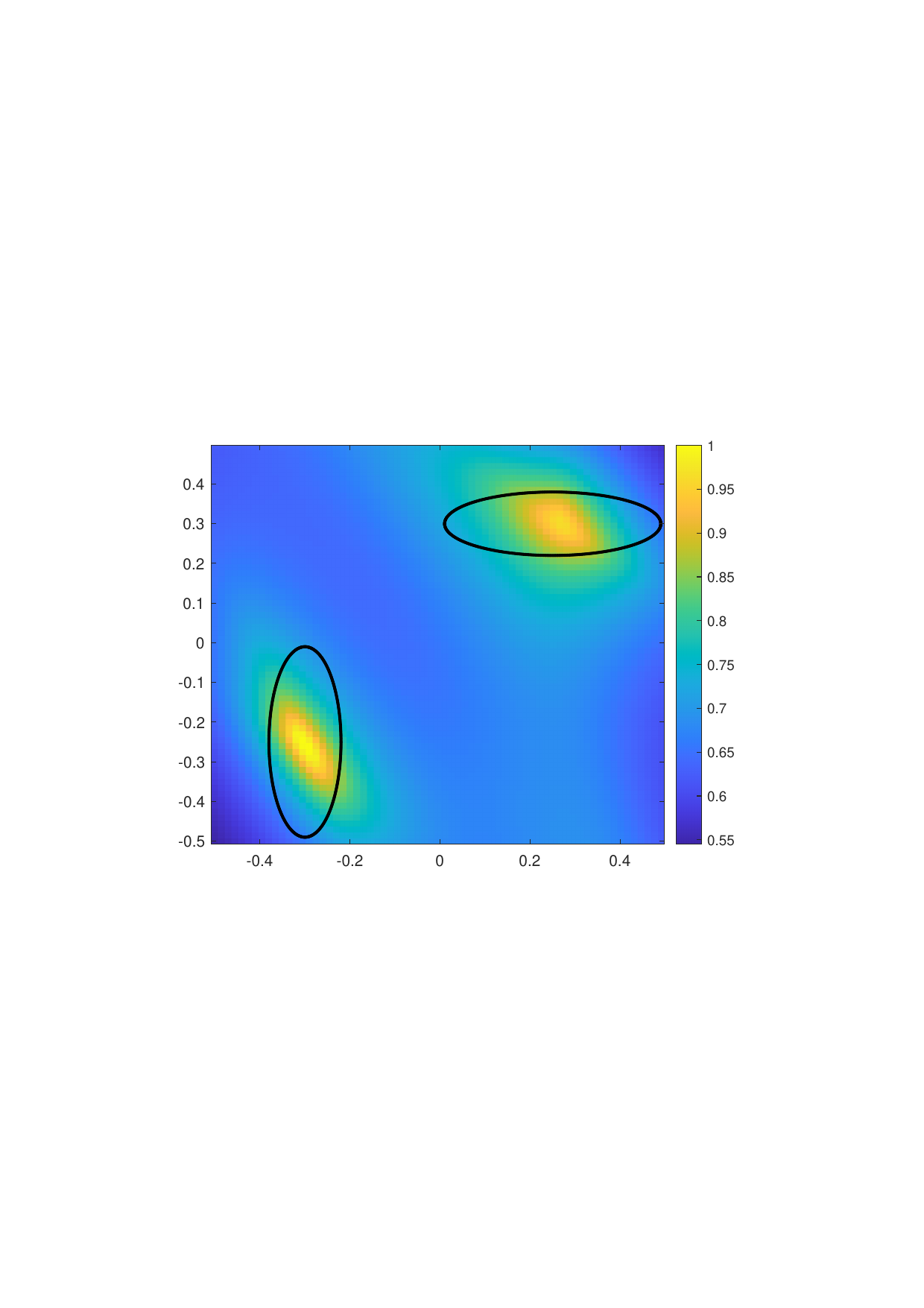}\quad
    \includegraphics[trim={4.5cm 9.5cm 4.5cm 9.5cm},clip,scale = 0.33]{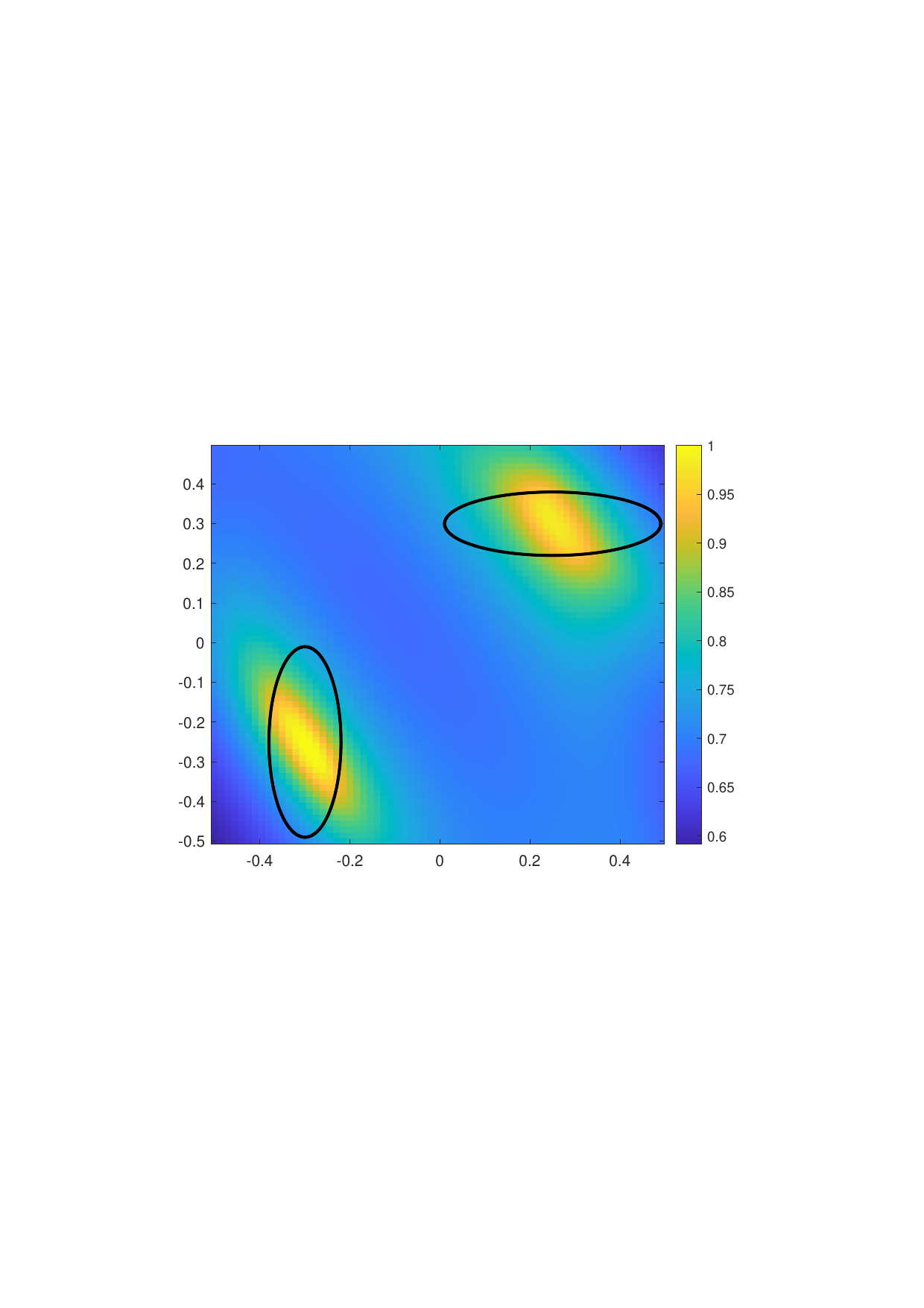}
    \caption{$I_w$, with noise levels of 1\%, 10\%, and 20\% (from left to right).}
    \end{subfigure}
    \begin{subfigure}[b]{1\textwidth}
    \centering
    \includegraphics[trim={4.5cm 9.5cm 4.5cm 9.5cm},clip,scale = 0.33]{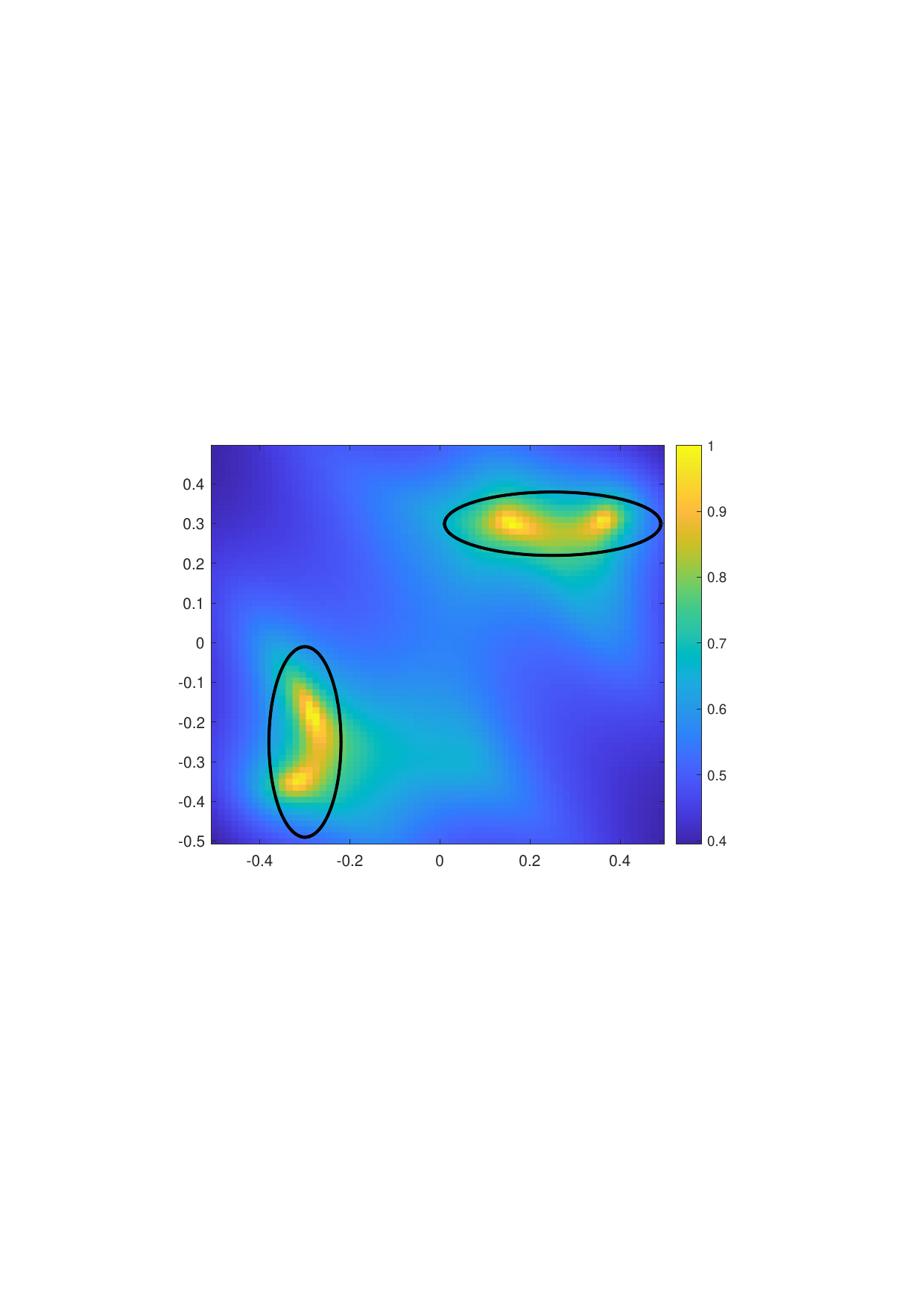}\quad
    \includegraphics[trim={4.5cm 9.5cm 4.5cm 9.5cm},clip,scale = 0.33]{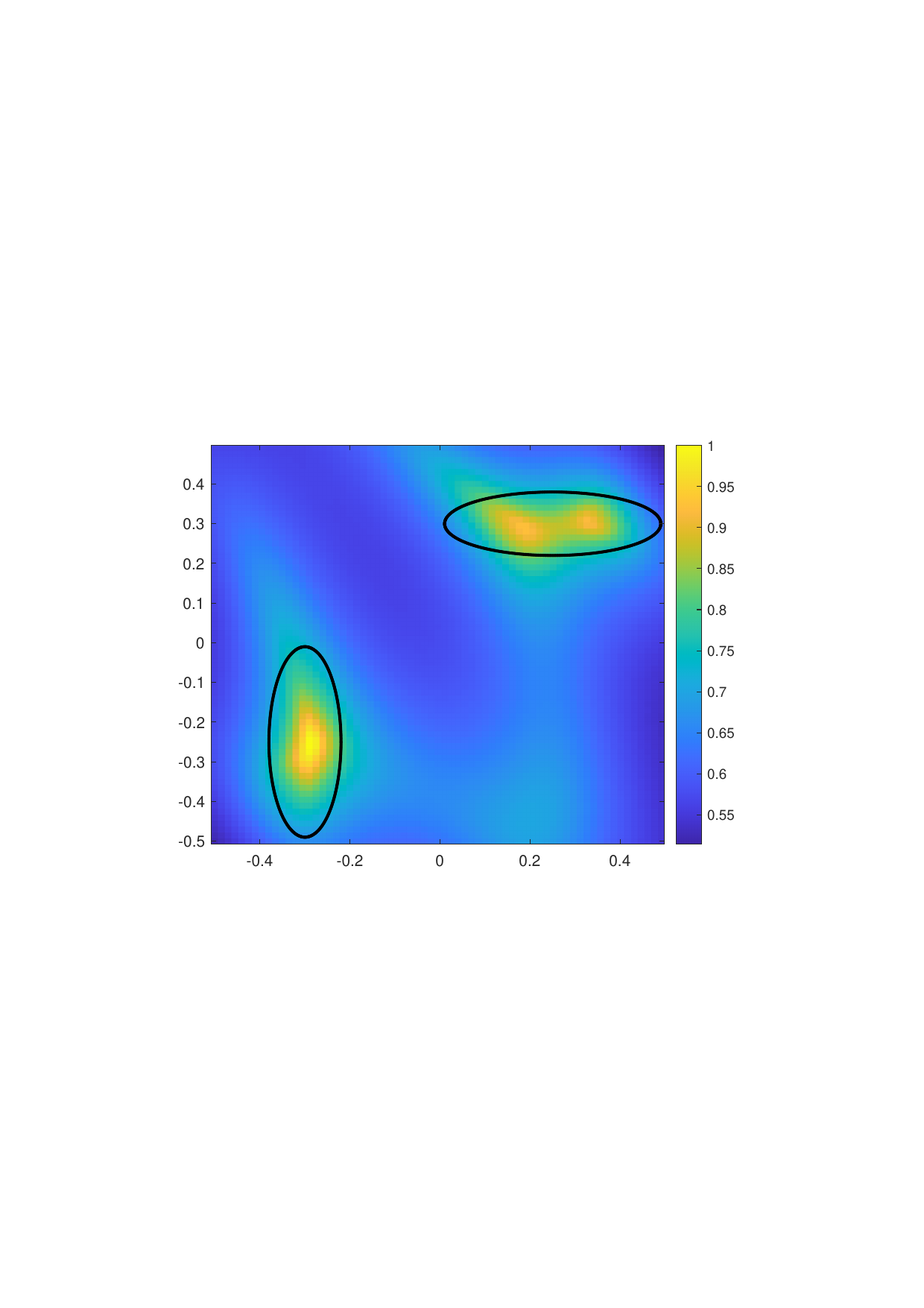}\quad
    \includegraphics[trim={4.5cm 9.5cm 4.5cm 9.5cm},clip,scale = 0.33]{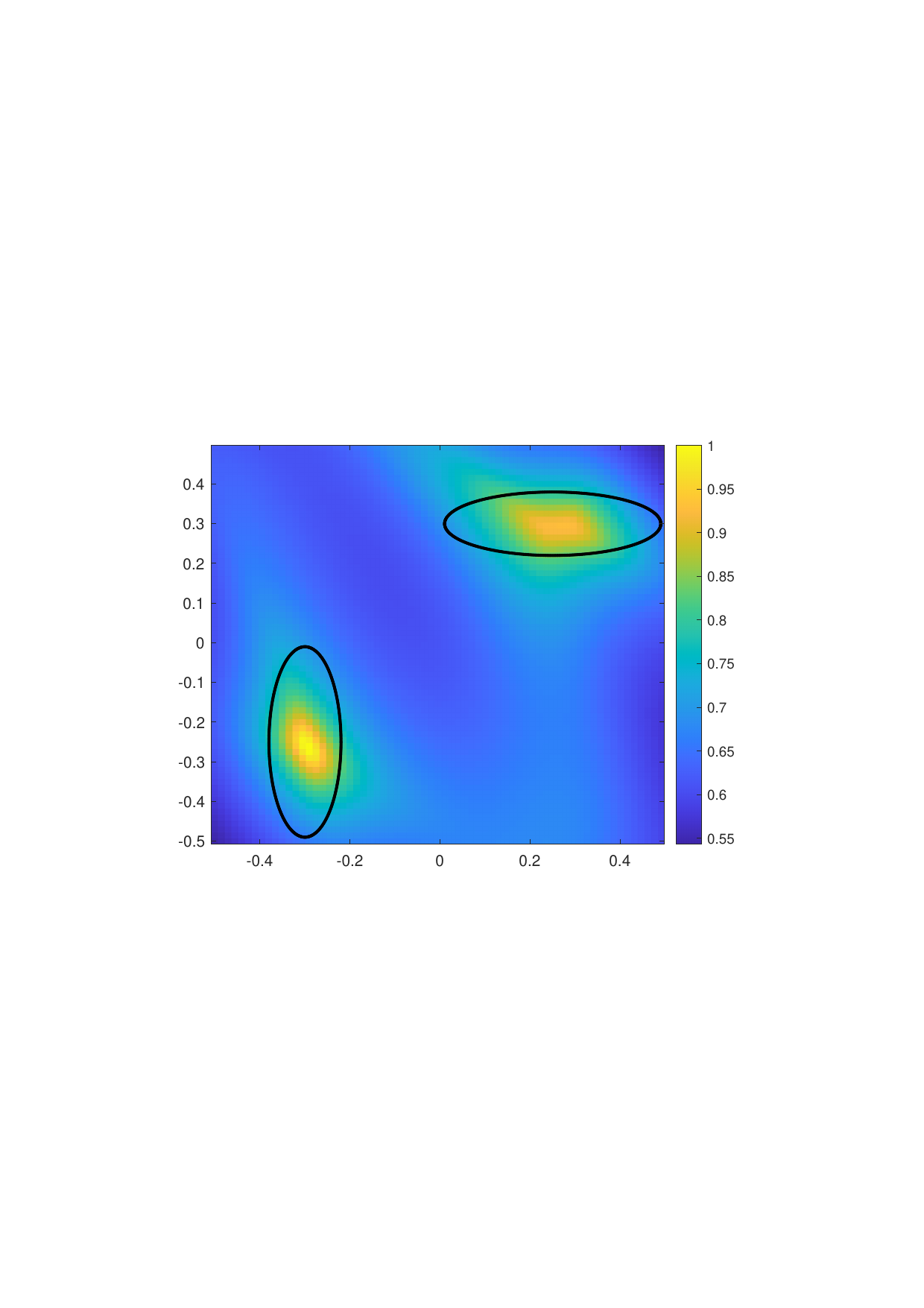}
    \caption{$I_{LSM}$, with noise levels of 1\%, 10\%, and 20\% (from left to right).}
    \end{subfigure}
    \caption{\textbf{Example 5.} Reconstructions at different noise levels with an aperture of $\pi/4$, $k = 8$, and 16 measurement points.}
\label{fig_ex5}
 \end{figure}

From Fig.\,\ref{fig_ex5}, it is observed that at a 1\% noise level (first column), both $I_w$ and $I_{LSM}$ accurately recover the locations of the inclusions. However, as the noise level increases (second and third columns), $I_w$ is notably better at recovering the support of the inclusions, especially the ellipse located at the upper right corner.

\subsection[NE in R3]{Numerical experiments in $\mathbb{R}^3$}

Next, we conduct two numerical experiments in $\mathbb{R}^3$ to validate the proposed method. We use $k = 8$ and a noise level of 10\%. The measurement aperture is $\theta \in [0, \pi/4]$ and $\phi \in [-\pi, \pi]$ in spherical coordinates, with 78 measurement points. The locations of these points are shown by blue dots in the final plot of the figures. To illustrate the three-dimensional reconstruction, we use an isosurface with a truncation value of 0.1 to represent the boundary of the inhomogeneous medium.

\textbf{Example 6.} This example, presented in Fig.\,\ref{fig_ex3d_1}, involves reconstructing two balls centered at $[-0.3, 0.2, 0]$ and $[0.3, 0, 0]$ with radii 0.2 and 0.1, respectively.

\begin{figure}[H]
    \centering
     \begin{subfigure}[b]{0.3\textwidth}
    \centering
    \includegraphics[trim={4.5cm 8cm 4.5cm 8cm},clip,scale = 0.3]{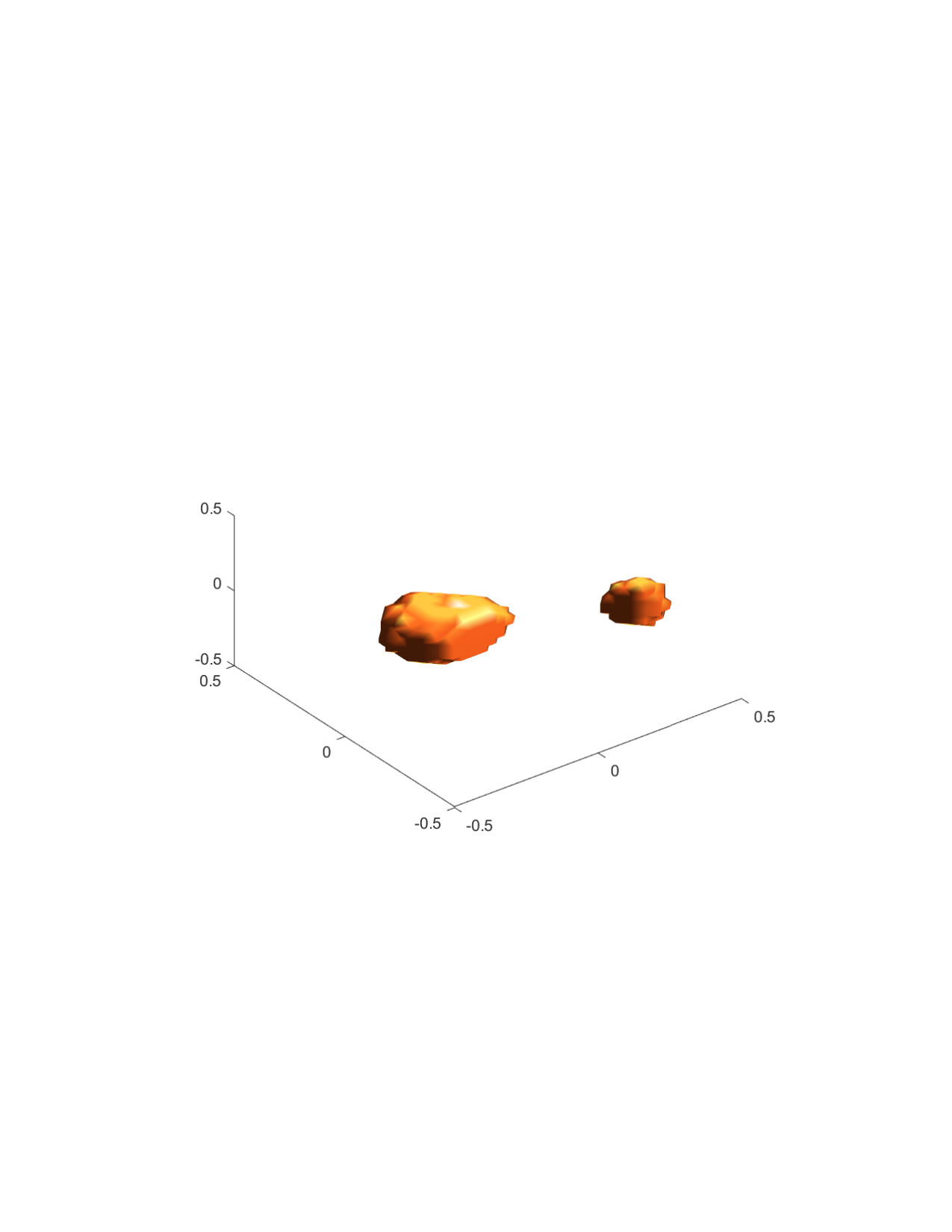}
    \caption{$I_w$}
    \end{subfigure}
    \quad     
    \begin{subfigure}[b]{0.3\textwidth}
    \centering
    \includegraphics[trim={4.5cm 8cm 4.5cm 8cm},clip,scale = 0.3]{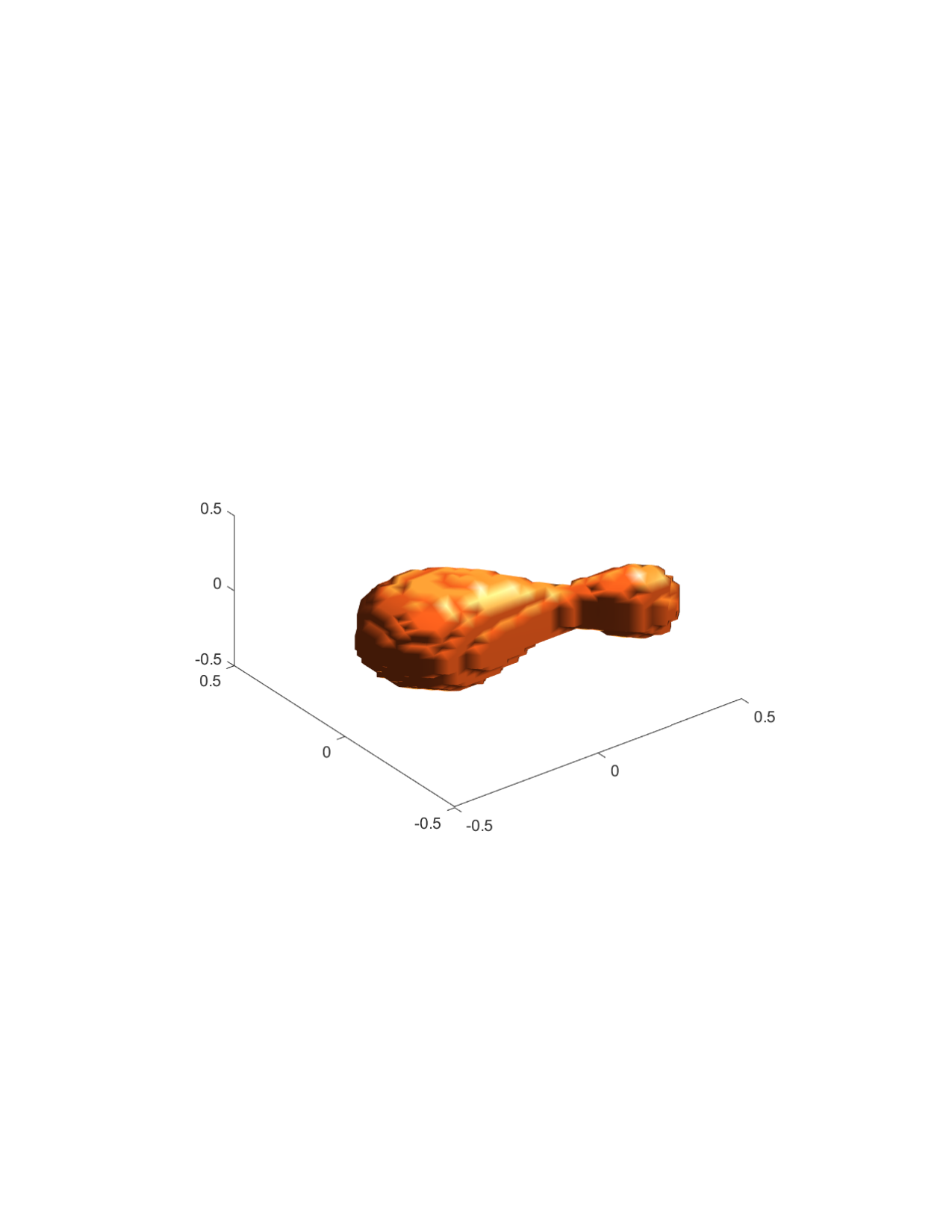}
    \caption{$I_{LSM}$}
    \end{subfigure}
    \quad
    \begin{subfigure}[b]{0.3\textwidth}
    \centering
    \includegraphics[trim={4.5cm 8cm 4.5cm 8cm},clip,scale = 0.3]{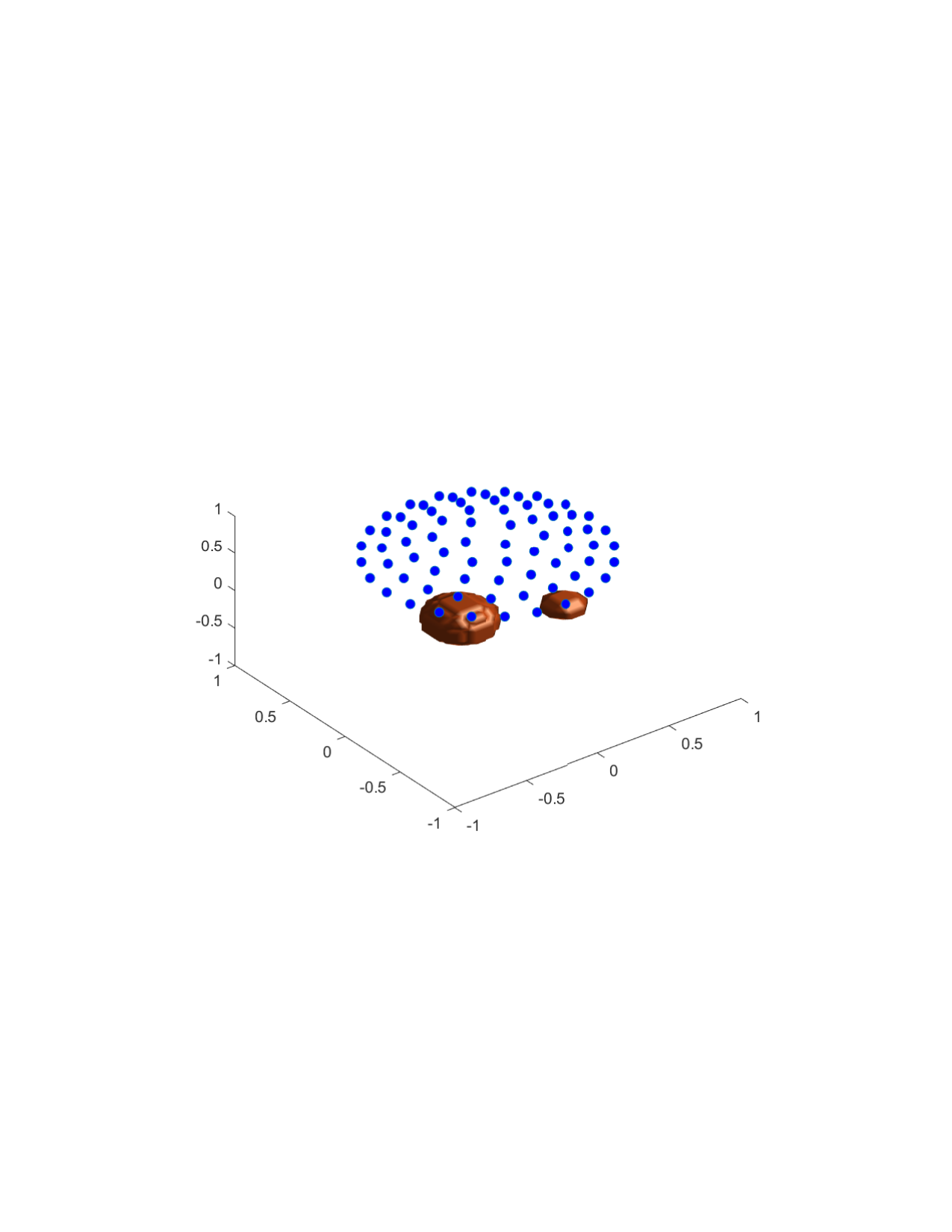}
    \caption{Exact inclusion}
   \end{subfigure}
  \caption{\textbf{Example 6.} Reconstructions by $I_w$ (left) and $I_{LSM}$ (middle) with an aperture of $[0, \frac{\pi}{4}] \times [0, 2\pi]$, $k = 8$, 10\% random noise, and 78 measurement points; and the exact inclusion (right).}
\label{fig_ex3d_1}
\end{figure}

From Fig.\,\ref{fig_ex3d_1}, despite the challenge of reconstructing two inclusions of different sizes that are close to each other, the proposed method (left plot) successfully recovers both the location and shape of the two balls under noisy and limited aperture measurement data.

\textbf{Example 7.} This example, shown in Fig.\,\ref{fig_ex3d_2}, involves reconstructing two rectangular bars located at $[-0.4, -0.2] \times [-0.25, 0.25] \times [-0.05, 0.05]$ and $[0.2, 0.4] \times [-0.25, 0.25] \times [-0.05, 0.05]$.

\begin{figure}[H]
    \centering
     \begin{subfigure}[b]{0.3\textwidth}
    \centering
    \includegraphics[trim={4.5cm 8cm 4.5cm 8cm},clip,scale = 0.3]{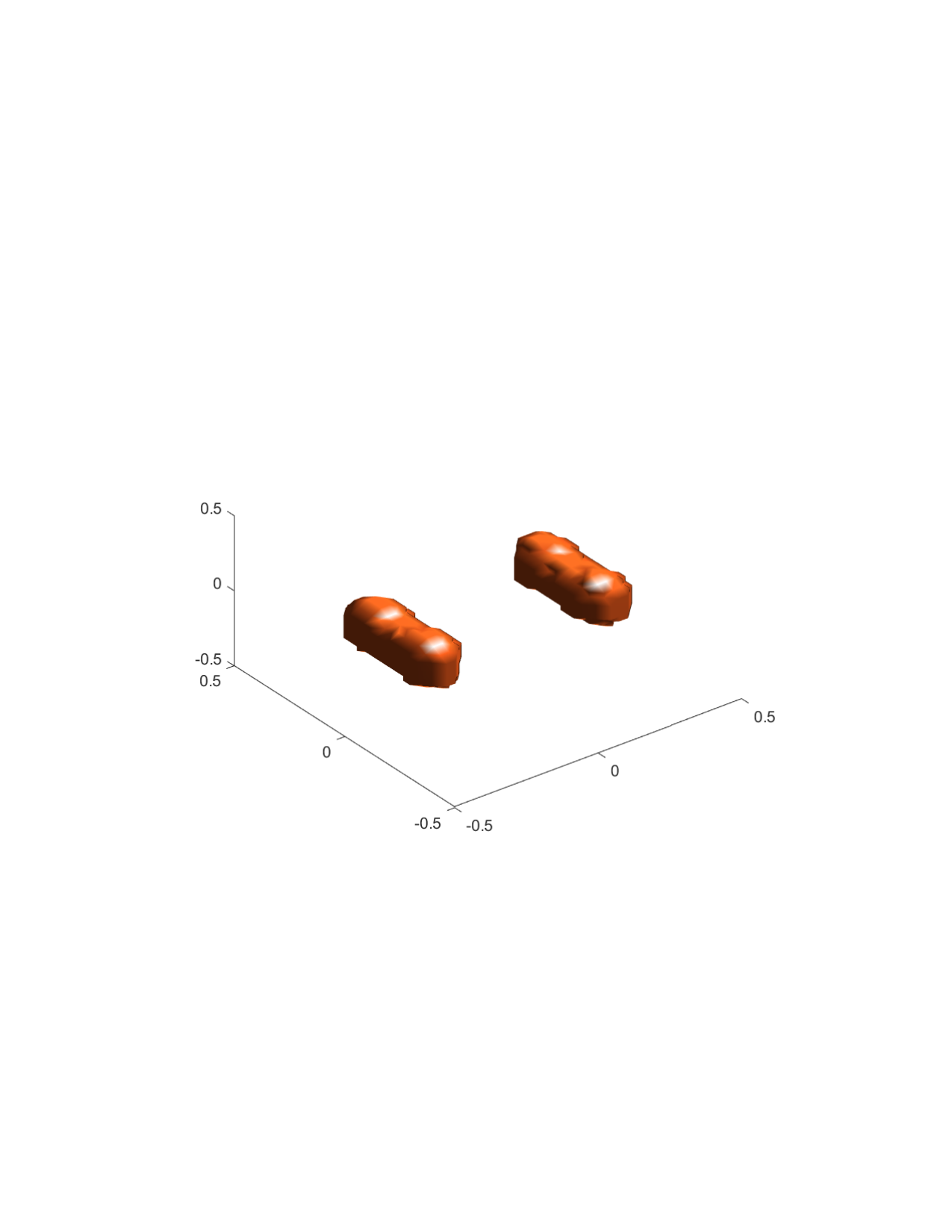}
    \caption{$I_w$}
    \end{subfigure}
    \quad     
    \begin{subfigure}[b]{0.3\textwidth}
    \centering
    \includegraphics[trim={4.5cm 8cm 4.5cm 8cm},clip,scale = 0.3]{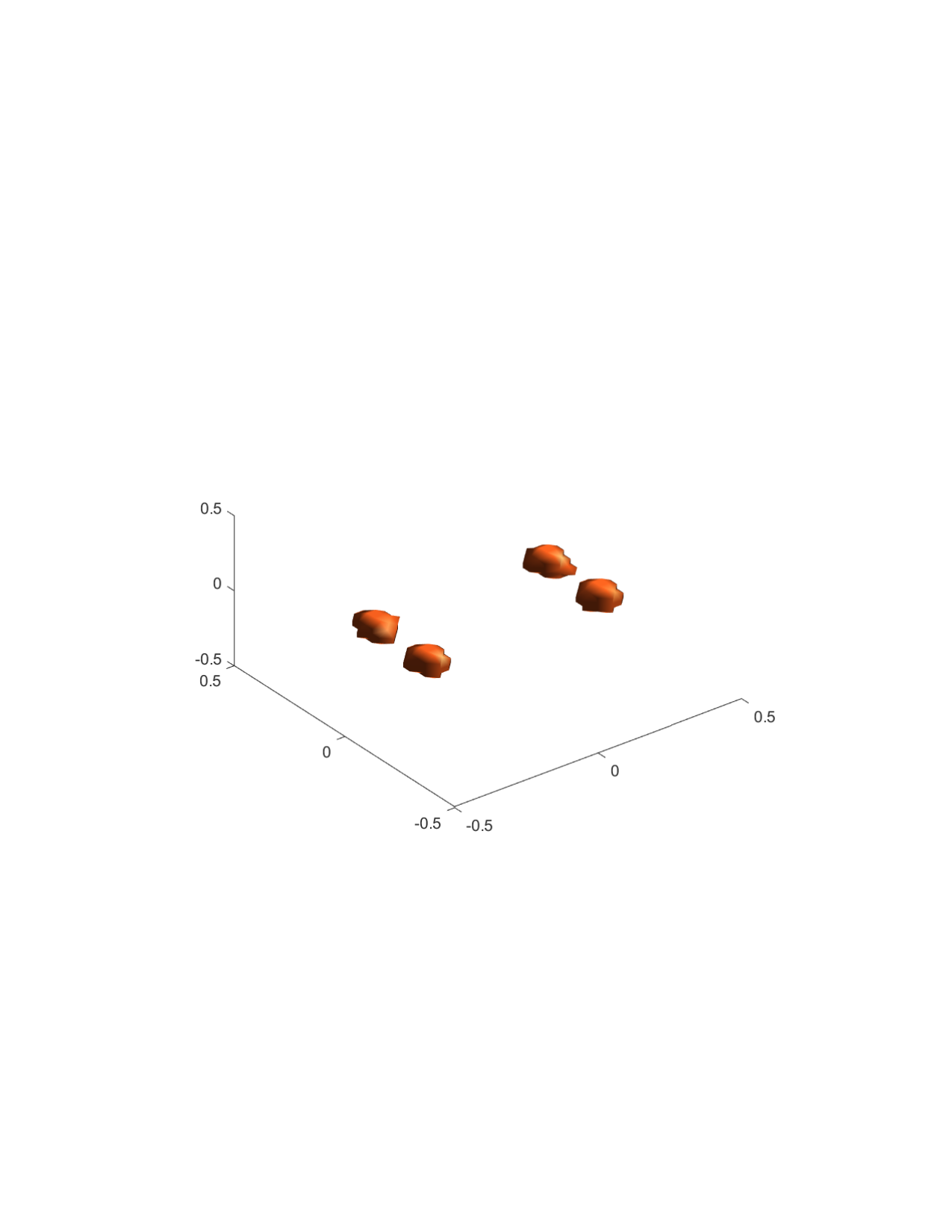}
    \caption{$I_{LSM}$}
    \end{subfigure}
    \quad
    \begin{subfigure}[b]{0.3\textwidth}
    \centering
    \includegraphics[trim={4.5cm 8cm 4.5cm 8cm},clip,scale = 0.3]{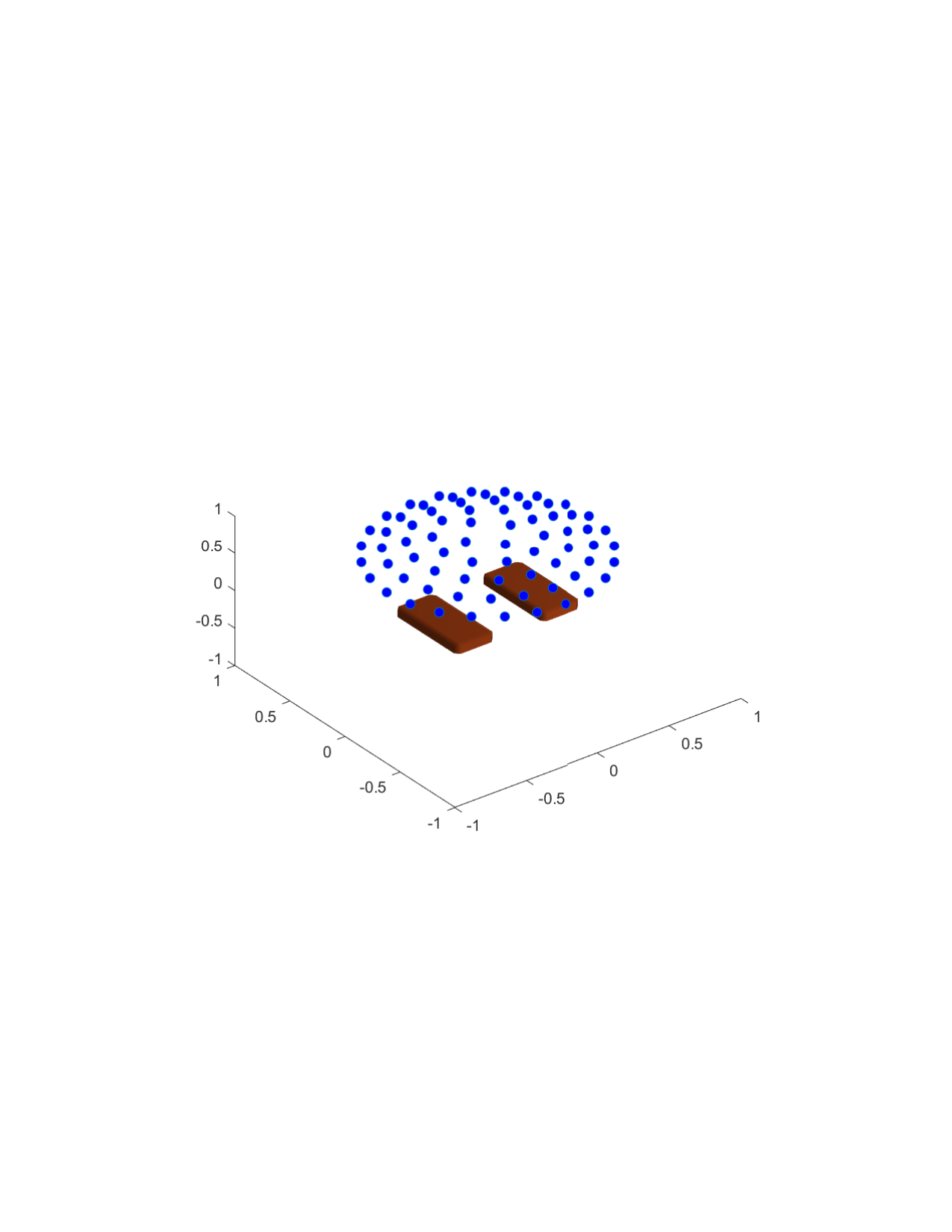}
    \caption{Exact inclusion}
   \end{subfigure}
  \caption{\textbf{Example 7.} Reconstructions by $I_w$ (left) and $I_{LSM}$ (middle) with an aperture of $[0, \frac{\pi}{4}] \times [0, 2\pi]$, $k = 8$, 10\% random noise, and 78 measurement points; and the exact inclusion (right).}
\label{fig_ex3d_2}
\end{figure}

From Fig.\,\ref{fig_ex3d_2}, it is observed that the proposed method (left plot) effectively identifies the shape and recovers the boundaries of the rectangular bars with high robustness.

\section{Conclusions}
In this work, we introduce a novel sampling method to solve the inverse medium scattering problem with acoustic waves, particularly when only limited aperture measurement data is available. This method is inspired by the classical linear sampling method, which performs well with full measurement data, and the direct sampling method, which enhances reconstruction accuracy and stability in limited aperture scenarios. Our approach employs a new weighted inner product between test functions and basis functions for the range of the far-field operator, which simplifies implementation.

To validate the accuracy and stability of the proposed method, we provide a theoretical justification for both cases of infinite and finite measurement data under the Born approximation. Furthermore, extensive numerical experiments demonstrate the superior performance of our method compared to existing sampling-based techniques. We also discuss the method's generalization to near-field imaging and its integration with the factorization method.

Several promising research directions arise from this work. First, it is crucial to rigorously establish the improvements in accuracy and stability offered by the new index function compared to the classical linear sampling method beyond the Born approximation. This could lead to a more general validation of the proposed method.
Second, extending the proposed method to other limited aperture inverse problems, where sampling-type methods have been previously applied, holds significant practical value. This includes full electromagnetic wave scattering problems, electrical impedance tomography, and time-dependent inverse problems.
Third, the core idea of this work involves solving the inverse problem by computing the weighted inner product of test functions and basis functions for specific operators. This concept can be generalized as a framework for solving inverse problems by selecting an appropriate weight function based on the forward operator, which would yield a robust and easily implementable numerical method. Developing and validating a framework for such direct methods in general inverse problems is a compelling future research avenue.

\bibliographystyle{siam}
\bibliography{references_Limited}

\begin{thebibliography}{10}

\bibitem{music_limited_Park}
{\sc C.~Y. Ahn, K.~Jeon, and W.-K. Park}, {\em Analysis of music-type imaging
  functional for single, thin electromagnetic inhomogeneity in limited-view
  inverse scattering problem}, J. Comput. Phys., 291 (2015), pp.~198--217.

\bibitem{GLSM_first}
{\sc L.~Audibert and H.~Haddar}, {\em A generalized formulation of the linear
  sampling method with exact characterization of targets in terms of farfield
  measurements}, Inverse Probl., 30 (2014), p.~035011.

\bibitem{GLSM_limited_aperture}
{\sc L.~Audibert and H.~Haddar}, {\em The generalized linear sampling method
  for limited aperture measurements}, SIAM J. Imag. Sci., 10 (2017),
  pp.~845--870.

\bibitem{bao2000regularity}
{\sc G.~Bao, Y.~Chen, and F.~Ma}, {\em Regularity and stability for the
  scattering map of a linearized inverse medium problem}, J. Math. Anal. Appl.,
  247 (2000), pp.~255--271.

\bibitem{inequal_special}
{\sc D.~J. Bordelon}, {\em Inequalities for special functions (d. k. ross)},
  SIAM Review, 15 (1973), pp.~665--670.

\bibitem{book_cakoni}
{\sc F.~Cakoni and D.~Colton}, {\em Qualitative Methods in Inverse Scattering
  Theory: An Introduction}, Interaction of Mechanics and Mathematics, Springer
  Berlin Heidelberg, 2005.

\bibitem{RTM_EM_2013}
{\sc J.~Chen, Z.~Chen, and G.~Huang}, {\em Reverse time migration for extended
  obstacles: electromagnetic waves}, Inverse Probl., 29 (2013), p.~085006.

\bibitem{xudong_book_2018}
{\sc X.~Chen}, {\em Computational methods for electromagnetic inverse
  scattering}, Wiley-IEEE Press, 2018.

\bibitem{decoupling_dsm}
{\sc Y.~T. Chow, F.~Han, and J.~Zou}, {\em A direct sampling method for
  simultaneously recovering inhomogeneous inclusions of different nature}, SIAM
  J. Sci. Comput., 43 (2021), pp.~A2161--A2189.

\bibitem{EIT_DSM}
{\sc Y.~T. Chow, K.~Ito, and J.~Zou}, {\em A direct sampling method for
  electrical impedance tomography}, Inverse Probl., 30 (2014), p.~095003.

\bibitem{first_LSM_1996}
{\sc D.~Colton and A.~Kirsch}, {\em A simple method for solving inverse
  scattering problems in the resonance region}, Inverse Probl., 12 (1996),
  p.~383.

\bibitem{dou2022data}
{\sc F.~Dou, X.~Liu, S.~Meng, and B.~Zhang}, {\em Data completion algorithms
  and their applications in inverse acoustic scattering with limited-aperture
  backscattering data}, Journal of Computational Physics, 469 (2022),
  p.~111550.

\bibitem{sonar}
{\sc A.~Elfes}, {\em Sonar-based real-world mapping and navigation}, IEEE J.
  Rob. Autom., 3 (1987), pp.~249--265.

\bibitem{garnier2023linear}
{\sc J.~Garnier, H.~Haddar, and H.~Montanelli}, {\em The linear sampling method
  for random sources}, SIAM Journal on Imaging Sciences, 16 (2023),
  pp.~1572--1593.

\bibitem{gautschi1978inverses}
{\sc W.~Gautschi}, {\em On inverses of vandermonde and confluent vandermonde
  matrices iii}, Numer. Math., 29 (1978), pp.~445--450.

\bibitem{bessel_inequalities_1990}
{\sc E.~Ifantis and P.~Siafarikas}, {\em Inequalities involving bessel and
  modified bessel functions}, J. Math. Anal. Appl., 147 (1990), pp.~214--227.

\bibitem{ito2012direct}
{\sc K.~Ito, B.~Jin, and J.~Zou}, {\em A direct sampling method to an inverse
  medium scattering problem}, Inverse Probl., 28 (2012), p.~025003.

\bibitem{TPM_review}
{\sc D.~Jin, R.~Zhou, Z.~Yaqoob, and P.~T.~C. So}, {\em Tomographic phase
  microscopy: principles and applications in bioimaging}, J. Opt. Soc. Am. B,
  34 (2017), pp.~B64--B77.

\bibitem{dsm_limited}
{\sc S.~Kang, M.~Lambert, C.~Y. Ahn, T.~Ha, and W.-K. Park}, {\em Single-and
  multi-frequency direct sampling methods in a limited-aperture inverse
  scattering problem}, IEEE Access, 8 (2020), pp.~121637--121649.

\bibitem{factorization_first}
{\sc A.~Kirsch}, {\em Factorization of the far-field operator for the
  inhomogeneous medium case and an application in inverse scattering theory},
  Inverse Probl., 15 (1999), p.~413.

\bibitem{factorization_book}
{\sc A.~Kirsch and N.~Grinberg}, {\em The factorization method for Inverse
  Probl.}, vol.~36, OUP Oxford, 2007.

\bibitem{kirsch2008factorization}
{\sc A.~Kirsch, N.~Grinberg, and O.~U. Press}, {\em The Factorization Method
  for Inverse Probl.}, Oxford Lecture Mathematics and, OUP Oxford, 2008.

\bibitem{music_first_2003}
{\sc S.~K. Lehman and A.~J. Devaney}, {\em Transmission mode time-reversal
  super-resolution imaging}, J. Acoust. Soc. Am., 113 (2003), pp.~2742--2753.

\bibitem{moiola2009approximation}
{\sc A.~Moiola, R.~Hiptmair, and I.~Perugia}, {\em Approximation by plane
  waves}, tech. rep., Research Report 2009-27, Eidgen{\"o}ssische Technische
  Hochschule Z{\"u}rich, 2009.

\bibitem{novotny2012principles}
{\sc L.~Novotny and B.~Hecht}, {\em Principles of nano-optics}, Cambridge
  university press, 2012.

\bibitem{non_destructive_test_book}
{\sc L.~W. Schmerr}, {\em Fundamentals of ultrasonic nondestructive
  evaluation}, Springer, New York, 2016.

\bibitem{microwave_tomo}
{\sc S.~Semenov}, {\em Microwave tomography: review of the progress towards
  clinical applications}, Philos. Trans. Royal Soc. A PHILOS T R SOC A, 367
  (2009), pp.~3021--3042.

\bibitem{yin2025physics}
{\sc Y.~Yin and L.~Yan}, {\em Physics-aware deep learning framework for the
  limited aperture inverse obstacle scattering problem}, SIAM Journal on
  Scientific Computing, 47 (2025), pp.~C313--C342.

\bibitem{geophysical_inverse}
{\sc M.~Zhdanov}, {\em Inverse Theory and Applications in Geophysics}, Elsevier
  Science, Amsterdam, 2015.

\end{thebibliography}
\end{document}